\documentclass[a4paper,11pt]{article}
\usepackage{graphicx}
\usepackage[T1]{fontenc}
\usepackage[utf8]{inputenc}
\usepackage[english]{babel}
\usepackage{csquotes}

\usepackage[p,osf]{scholax}
\usepackage{amsmath,amsthm}
\usepackage[scaled=1.075,ncf,vvarbb]{newtxmath}

\usepackage{subcaption}
\usepackage{geometry}
 \geometry{left=2.5cm}
 \geometry{right=2cm}
 \geometry{top=2cm}
 \geometry{bottom=2cm}

 \usepackage{microtype}
 \usepackage{nicefrac}
 \usepackage{empheq}
 
\usepackage[svgnames,x11names,table]{xcolor} 
 
\usepackage{pgfplots}
\usepackage{hyperref}
\hypersetup{
    colorlinks=true,        
    linkcolor=DodgerBlue,   
    citecolor=DarkRed,      
    urlcolor=Teal           
} 

\usepackage{framed}
\usepackage[noend]{algorithmic}
\usepackage{algorithm}
 
\usepackage{enumerate}
\usepackage{cleveref}

\newtheorem{theorem}{Theorem}
\theoremstyle{definition}
\newtheorem{lemma}{Lemma}

\newtheorem{remark}{Remark}

\usepackage{bbm}

\newcommand{\n}[1]{\|#1 \|}

\renewcommand{\a}{\alpha}

\renewcommand{\d}{\delta}

\newcommand{\la}{\lambda}

\renewcommand{\phi}{\varphi}

\renewcommand{\th}{\theta}

\newcommand{\x}{\bar x}

\newcommand{\R}{\mathbbm R}
\newcommand{\eR}{(-\infty,+\infty]}

\newcommand{\cO}{\mathcal O}
\newcommand{\cC}{\mathcal C}
\newcommand{\cX}{\mathcal X}

\newcommand{\lr}[1]{\langle #1\rangle}
\newcommand{\lrb}[1]{\left( #1 \right)}

\renewcommand{\leq}{\leqslant}
\renewcommand{\geq}{\geqslant}

\DeclareMathOperator{\prox}{prox}

\DeclareMathOperator{\dom}{dom}
\DeclareMathOperator{\id}{Id}

\DeclareMathOperator{\sign}{sign}

\DeclareMathOperator{\tr}{tr}
\def\<#1,#2>{\langle #1,#2\rangle}

\renewcommand{\ss}{\alpha}
\newcommand{\sbg}[1]{\widetilde\nabla g(x^{#1})}
\newcommand{\Sbg}[1]{\widetilde\nabla F(x^{#1})}
\newcommand{\opt}{\mapsto}

\usepackage[
style=alphabetic,backend=biber,
url=false,isbn=false,
sorting=nyt,maxbibnames=100,
giveninits=true, date=year
]{biblatex}
\AtBeginBibliography{\small}

%\addbibresource{refs.bib}
\addbibresource{paper_biber.bib}

\newcommand{\block}[3]{
 \hspace{-1.5mm}
 \begin{tikzpicture}
   [
   ]
   \node[fill={rgb,255:red,#1;green,#2;blue,#3},rectangle, minimum width=0.5cm,
                        minimum height = 0.15cm, inner sep=0pt, thick, rounded
                        corners=.09cm,
                        line width=0pt] () [] {};
 \end{tikzpicture}%
}

\graphicspath{ {plots/} }

\def\toptitlebar{\hrule height1pt \vskip .25in}
\def\bottomtitlebar{\vskip .22in \hrule height1pt \vskip .3in}

\title{\toptitlebar\textbf{Adaptive proximal gradient method for convex optimization}\bottomtitlebar}
\author{\textbf{Yura Malitsky}\footnote{Faculty of Mathematics, University of Vienna, Austria,
        \href{mailto:yurii.malitskyi@univie.ac.at}{yurii.malitskyi@univie.ac.at}}
    \and \textbf{Konstantin Mishchenko}\thanks{Samsung AI Center, UK. Work done prior to joining the company. \href{mailto:konsta.mish@gmail.com}{konsta.mish@gmail.com}}}
\date{}

\begin{document}
\maketitle
\begin{abstract}
  In this paper, we explore two fundamental first-order algorithms in convex optimization, namely, gradient descent (GD) and proximal gradient method (ProxGD). Our focus is on making these algorithms entirely adaptive by leveraging local curvature information of smooth functions. We propose adaptive versions of GD and ProxGD that are based on observed gradient differences and, thus, have no added computational costs. Moreover, we prove convergence of our methods assuming only \emph{local} Lipschitzness of the gradient. In addition, the proposed versions allow for even larger stepsizes than those initially suggested in~\cite{malitsky2019_descent}.  
\end{abstract}

\section{Intro}
In this paper, we address a convex minimization problem
\[\min_{x\in\R^d}F(x).\]
We are interested in the cases when either $F$ is differentiable and then we
will use notation $F = f$, or it has a composite additive structure as
$F = f + g$. Here, $f$ represents a convex and differentiable function, while
$g$ is convex, lower semi-continuous (lsc), and prox-friendly. Throughout the
paper, we will interchangeably refer to the smoothness of $f$ and the
Lipschitzness of $\nabla f$, occasionally with the adjective "locally,"
indicating that it is restricted to a bounded set. We will refer to this
property as smoothness, without mentioning the Lipschitzness of $f$, so we hope
there will be no confusion in this regard.

\bigskip

For simplicity, in most of the introduction, we consider only the simpler problem
$\min_x f(x)$. We study one of the most classical optimization algorithm~---
\emph{gradient descent}~---
\begin{equation}
  \label{eq:GD}
  x^{k+1} = x^k - \ss_k \nabla f(x^k).
\end{equation}
Its simplicity and the sole prerequisite of knowing  the gradient of $f$ make it appealing for diverse applications. This method is central in modern continuous optimization, forming the bedrock for numerous extensions.

Given the initial point $x^0$, the only thing we need to implement \eqref{eq:GD}
is to choose a stepsize $\ss_k$ (also known as a learning rate in machine
learning literature). This seemingly tiny detail is crucial for the method
convergence and performance. When a user invokes GD as a solver, the standard
approach would be to pick an arbitrary value for $\ss_k$, run the algorithm, and
observe its behavior. If it diverges at some point, the user would try a smaller
stepsize and repeat the same procedure. If, on the other hand, the method takes
too much time to converge, the user might try to increase the stepsize. In
practice, this approach is not very efficient, as we have no theoretical
guarantees for a randomly guessed stepsize, and the divergence may occur after a
long time. Both underestimating and overestimating the stepsize can, thus, lead
to a large overhead. \bigskip

Below we briefly list possible approaches to choosing or estimating the stepsize and we provide a more detailed literature overview in Section~\ref{sec:literature}.
%Below, we provide a brief list of possible approaches to address the stepsize issue.

\paragraph{Fixed stepsize.} When $f$ is $L$-smooth,  GD can utilize a fixed stepsize $\ss_k=\ss < \frac 2 L$ and values larger than $\frac{2}{L}$ will provably lead to divergence. Consequently, in such scenarios, the rate of convergence is given by $f(x^k)-f_* = \cO\left(\frac{1}{\ss k}\right)$, clearly indicating a direct dependence on the stepsize. Nevertheless, several drawbacks emerge from this approach:
\begin{enumerate}[(a)]
\item $L$ is not available in many practical scenarios;
\item if the curvature of $f$ changes a lot, GD with the global value of $L$ may be too conservative;
\item $f$ may be not globally $L$-smooth.
\end{enumerate}
For illustration, consider the following functions. Firstly, when dealing with
$f(x) = \frac{1}{2}\|Ax-b\|^2$, where $A\in \R^{n\times d}$ and $b\in\R^n$,
estimating $L$ involves evaluating  the largest eigenvalue of $A^\top A$.
Second, the logistic loss $f(x)=\log(1+\exp(-ba^\top x))$, with $a\in\R^d, b\in
\{-1,1\}$, is almost flat for large $x$, yet for values of $x$ closer to $0$, it
has $L=\frac{1}{4}\|a\|^2$ . Thus, if the solution is far from $0$, gradient
descent with a constant stepsize would be too conservative. Finally, consider $f(x) = x^4$. While this simple objective is not globally $L$-smooth for any value of $L$, on any bounded set it \emph{is} smooth, and we would hope we can still minimize objectives like that.
  
\paragraph{Linesearch.} Also known as backtracking in the literature.  In the $k$-th iteration we compute $x^{k+1}$ with a certain stepsize $\ss_k$ and check a specific condition. If the condition holds, we accept $x^{k+1}$ and proceed to the next iteration; otherwise we halve $\a_k$ and recompute $x^{k+1}$ using this reduced stepsize. This approach, while the most robust and theoretically sound, incurs substantially higher computational cost compared to regular GD due to the linesearch procedure.

\paragraph{Adagrad-type algorithms.} These are the methods of the
type\footnote{We provide only the simplest instance of such algorithms.}
\begin{equation}
  \label{eq:adagrad}
  \begin{aligned}
    v_{k} &= v_{k-1} + \n{\nabla f(x^k)}^2\\
    x^{k+1} &= x^k - \frac{d_k}{\sqrt{v_{k}}}\nabla f(x^k),
  \end{aligned}
\end{equation}
where $v_{-1} \ge 0$ is some constants, and $d_k$ is an estimate of $\|x^0 - x^*\|$ some some solution $x^*$. 
While such methods indeed have certain nice properties, $d_k$ is usually either constant or quickly converges to a constant value, so a quick glance at
\eqref{eq:adagrad} will reveal that its stepsizes are decreasing. Therefore, despite the name, we
cannot expect true adaptivity of this method to the local curvature of $f$.

\paragraph{Heuristics.}
Numerous heuristics exist for selecting $\ss_k$ based on local properties of $f$ and $\nabla f$, with the Barzilai-Borwein method \cite{barzilai1988two} being among the most widely popular. However, it is crucial to note that we are not particularly interested in such approaches, as they lack consistency and may even lead to divergence, even for  simple convex problems.

\bigskip

We have already mentioned \emph{adaptivity} a few times, without properly introducing it. Now let us try to properly understand its meaning in the context of gradient descent. Besides the initial point $x^0$, GD has only one degree of freedom~--- its stepsize. From the analysis we know that it has to be approximately an inverse of the local smoothness.  We call a method \emph{adaptive}, if it automatically adapts a stepsize to this local smoothness without additional expensive computation and the method does not deteriorate the rate of the original method in the worst case. In our case, original method is GD with a fixed stepsize.

By this definition, GD with linesearch is not adaptive, because it finds the right stepsize with some extra evaluations of $f$ or $\nabla f$. GD with diminishing steps (as in subgradient or Adagrad methods) is also not adaptive, because decreasing steps cannot in general represent well the function's curvature; also the rate of the subgradient method is definitely worse.  It goes without saying, that for a \emph{good} method its rate must experience improvement when we confine the class of smooth convex functions to the strongly convex ones. 

\begin{center}
  \textsc{Contribution}
\end{center}

In a previous work \cite{malitsky2019_descent}, which serves as the cornerstone for the current paper, the authors
proposed an adaptive gradient method named \emph{``Adaptive Gradient
Descent without Descent''} (AdGD). In the current paper, we
\begin{itemize}
\item  deepen our understanding of AdGD and identify its limitations;
\item  refine its theory to accommodate  even larger steps;
\item  extend the revised algorithm from  unconstrained to the proximal case.
\end{itemize}
The analysis in the last two cases is not a trivial extension, and we were
rather pleasantly  surprised that this was possible at all. After all, the theory of GD 
is well-established and we thought it to be too well-explored for us to discover something new.

\paragraph{Continuous point of view.} It is instructive for some time to switch from the discrete setting to the
continuous and to compare gradient descent (GD) with its parent~--- gradient
flow (GF)
\begin{equation}
  \label{eq:gf}
  x'(t) = -\nabla f(x(t)), \qquad x(0)=x_0,
\end{equation}
where $t$ is the time variable and $x'(t)$ denotes the derivative of $x(t)$ with respect to $t$. 
To guarantee existence and uniqueness of a trajectory $x(t)$ of GF, it is
sufficient to assume that $\nabla f$ is \emph{locally} Lipschitz-continuous.
Then one can prove convergence of $x(t)$ to a minimizer of $f$ in just a few
lines. For GD, on the other hand, the central assumption is \emph{global}
Lipschitzness of $\nabla f$. Our analysis of gradient descent makes it level:
local Lipschitzness suffices for both. Or to put it differently, we provide an
adaptive discretization of GF that converges under the same assumptions as the
original continuous problem~\eqref{eq:gf}.

\paragraph{Proximal case.}
We emphasize that there is already an excellent extension by Latafat et al.~\cite{latafat2023adaptive} of the work~\cite{malitsky2019_descent} to the additive composite case. Our proposed result, however, is based on an improved unconstrained analysis and uses a different proof. We believe that both these facts will be of interest. We don't have a good understanding why, but for us finding the proof for the proximal case was quite challenging. It does not follow the standard lines of arguments and uses a novel Lyapunov energy  in the analysis.

\paragraph{Nonconvex problems.} We believe that our algorithm will be no less important in the nonconvex case, where gradients are rarely globally Lipschitz continuous and where the curvature may change more drastically. It is true that our analysis applies only to the convex case, but, as far as we know, limited theory has never yet prevented practitioners from using methods in a broader setting. And based on our (speculative) experience, we found it challenging to identify nonconvex functions where the method did not converge to a local solution.

\paragraph{Outline.} In \Cref{sec:improved}, we begin by revisiting AdGD from
\cite{malitsky2019_descent}, examining its limitations, and demonstrating a
simple way to enhance it. This section maintains an informal tone, making it
easily accessible for quick reading and classroom presentation. In
\Cref{sec:larger}, we further improve the method and provide all formal proofs.
\Cref{sec:prox} extends the improved method to the proximal case. In
\Cref{sec:literature} we put our finding in the perspective and compare it to
some existing works. Lastly, in \Cref{sec:exp}, we conduct experiments to
evaluate the proposed method against different linesearch variants.

 \subsection{Preliminaries}
We say that a mapping is \emph{locally Lipschitz} if it is Lipschitz over any
compact set of its domain.  A function $f\colon \R^d\to \R$ is \emph{(locally) smooth} if its gradient $\nabla f$
is  (locally) Lipschitz.

A convex $L$-smooth function $f$ is characterized by the following inequality
\begin{equation}
  \label{eq:Lsmooth}
  f(y)-f(x) - \lr{\nabla f(x), y-x} \geq \frac{1}{2L} \n{\nabla f(y)-\nabla f(x)}^2 \quad \forall x,y.
\end{equation}
This is equivalently of saying  that $\nabla f$ is a $\frac{1}{L}$-\emph{cocoercive}
operator, that is
\begin{equation}
  \label{eq:coco}
  \lr{\nabla f(y) - \nabla f(x), y-x} \geq \frac{1}{L} \n{\nabla f(y)-\nabla f(x)}^2 \quad \forall x,y.
\end{equation}
For a convex differentiable $f$ that is not $L$-smooth one can only say that $\nabla f$ is \emph{monotone},
that is
\begin{equation}
  \label{eq:mono}
  \lr{\nabla f(y) - \nabla f(x), y-x} \geq 0\quad \forall x,y.
\end{equation}
We  use notation $[t]_+ = \max\{t, 0\}$ and for any $a>0$ we suppose that
$\frac{a}{0}=+\infty$. With a slight abuse of notation, we write $[n]$ to denote the
set $\{1,\dots, n\}$.
A solution and the value of the optimization problem
$\min f(x)$ are denoted by $x^*$ and $f_*$, respectively.

\section{Adaptive gradient descent: better analysis}\label{sec:improved}
Let us start with the simpler problem of $\min_{x} f(x)$ with a convex, locally smooth
$f\colon \R^{d}\to \R$. To solve it, in \cite{malitsky2019_descent}, the authors proposed a method called \emph{adaptive gradient descent without descent} (AdGD), whose update is given below:
\begin{equation}\label{eq:old_adgd}
    \begin{aligned}
        \ss_k &= \min\Bigl\{
        \sqrt{1+\th_{k-1}}\ss_{k-1}, \frac{\n{x^k-x^{k-1}}}{2\n{\nabla f(x^k)-\nabla f(x^{k-1})}}\Bigr\}, \qquad \textrm{where }\th_k = \frac{\ss_k}{\ss_{k-1}} \\
        x^{k+1} & = x^k - \ss_k \nabla f(x^k).
    \end{aligned}
  \end{equation}
Similarly to the standard GD, this method leads to $\cO(1/k)$ convergence rate.
However, unlike the former, it doesn't require any knowledge about Lipschitz
constant of $\nabla f$ and doesn't even require a global Lipschitz continuity of
$\nabla f$.

The update for $\ss_{k}$ has two ingredients. The first bound
$\ss_{k}\leq \sqrt{1+\th_{k-1}}\ss_{k-1}$ sets how fast steps may increase from
iteration to iteration. The second
$\ss_{k}\leq \frac{\n{x^k-x^{k-1}}}{2\n{\nabla f(x^k)-\nabla f(x^{k-1})}}$
corresponds to the estimate of local Lipschitzness of $\nabla f$.
%We do not need to take $\min$ in \eqref{eq:old_adgd}, any step $\la_{k}$ that satisfy given inequalities will work.

It is important to understand how essential these bounds are. Do we
really need to control the growth rate of $\ss_{k}$ or is it an artifact of our
analysis? For the second bound, it is not clear whether $2$ in the denominator
is necessary. For example, given $L$-smooth $f$, our scheme \eqref{eq:old_adgd}
does not encompass a standard GD with $\ss_{k}=\frac{1}{L}$ for all $k$. 

\paragraph{First bound.} Answering the first question is relatively easy. Consider the following function

\begin{minipage}{.49\textwidth}
  \begin{align}
    \hspace{-1cm} f(x) =
        \begin{cases}
                \frac{1}{2}x^2,& x\in [-1, 1] \\
                a(|x|-\log(1+|x|))+b, & x\not\in [-1, 1]
        \end{cases}
  \label{eq:counter} 
\end{align}
\end{minipage} 
\qquad \qquad \qquad \begin{minipage}{.4\textwidth}
\begin{tikzpicture}
    \def\aa{2}
  \def\bb{2*ln(2)-1.5}
\begin{axis}[
  % axis lines along center, not on outer edges
 axis lines=center,
  % make axis lines thick
  axis line style={thick},
  % move arrow tip of x-axis to start of line
  x axis line style={-stealth},
  % set the range of the axis
  xmin=-7.3, xmax=7.3,
  ymin=-0.3, ymax=10.3,
  % distance between the ticks, so distance between grid lines
  xtick distance=1,
  ytick distance=1,
  % remove ticklabels
  xticklabels={},
  yticklabels={},
  % set size of axis box
  width=5cm, height=3cm,
  scale only axis,
  % add grid, and change its style
  %grid,
  %grid style={help lines,color=gray!50, dashed},
  % add axis labels
  xlabel={$x$},
  ylabel={$f(x)$},
  % move xlabel right of axis end point (default is above left)
  xlabel style={right},
  ylabel style={right},  
  xtick={1},
  ytick={1},
  xticklabels={\tiny{$1$}},
  yticklabels={\tiny{$1$}}
  ]
  \addplot[domain=-1:1,cyan!80!black, ultra thick]              {0.5 * \x * \x}  ;
  \addplot[domain=-7:-1,cyan!80!black, ultra thick]        {\aa * (-\x - ln(1 -    \x)) + \bb}       ;
  \addplot[domain=1:7,cyan!80!black, ultra thick]        {\aa * (\x - ln(1 + \x)) + \bb}       ;

\end{axis}
\end{tikzpicture}

\end{minipage}
\\where parameters $a, b>0$ are chosen to ensure that $f(\pm 1)$ and
$f'(\pm 1)$ are well-defined, namely  $a=2$ and $b=2\log
2-\frac{3}{2}$, see \Cref{prop:bad_f} in \hyperlink{appendix}{Appendix}.

From an optimization point of view, $f$ is a nice function. In particular, it is
convex (even locally strongly convex) and its gradient is $1$-Lipschitz,
see~\Cref{prop:bad_f}. This means that both GD and AdGD linearly converge on
it. However, if we remove the first condition for $\ss_k$ in AdGD, this new
modified algorithm will fail to converge. We can prove an even stronger
statement. Specifically, let $c\geq 1$, $\ss_0=1$ and consider the following
method
\begin{equation}\label{bad_gd}
  \begin{aligned}
    \ss_k &= \frac{\n{x^k-x^{k-1}}}{c\n{\nabla f(x^k) - \nabla f(x^{k-1})}}, \quad \forall k\geq 1\\
  x^{k+1} &= x^k - \ss_k \nabla f(x^k), \quad \forall k\geq 0.
  \end{aligned}
\end{equation}
In other words, the update in~\eqref{bad_gd} is the same as in \eqref{eq:old_adgd} except 
we removed the first constraint for $\ss_k$  in \eqref{eq:old_adgd} and introduced a constant 
factor $c$ to make the second one more general.
\begin{theorem}\label{prop:counter}
For any $c\geq 1$ there exists $x^0$ such that the method \eqref{bad_gd} applied to
$f$ defined in~\eqref{eq:counter}  diverges. %in \eqref{eq:counter}
\end{theorem}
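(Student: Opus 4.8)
The plan is to exploit the one-dimensional, essentially flat geometry of $f$ outside $[-1,1]$: there $\nabla f$ is almost constant, so the estimated local curvature $\n{x^k-x^{k-1}}/\n{\nabla f(x^k)-\nabla f(x^{k-1})}$ becomes enormous, $\ss_k$ becomes huge, and the next iterate overshoots to the opposite side with a much larger magnitude. I will take $x^0$ far out in this flat region and show the iterates oscillate between the two sides of $[-1,1]$ while their magnitudes grow to infinity, so that $\{x^k\}$ cannot converge.

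First I would record that $f$ is even, hence $\nabla f$ is odd, and that for $x>1$ one has $f'(x)=\tfrac{2x}{1+x}\in(1,2)$; in particular $f'$ is strictly increasing and bounded on $\{x>1\}$ and on $\{x<-1\}$, so two consecutive iterates lying in $\{|x|>1\}$ are distinct and have distinct gradients, and $\ss_k$ is well defined. A short computation then yields two closed forms for the update when $x^{k-1},x^k\in\{|x|>1\}$. (i) If $x^{k-1},x^k$ lie on the \emph{same} side (say both $>1$), then $\n{\nabla f(x^k)-\nabla f(x^{k-1})}/\n{x^k-x^{k-1}}=\tfrac{2}{(1+x^{k-1})(1+x^k)}$, hence $x^{k+1}=x^k\bigl(1-\tfrac{1+x^{k-1}}{c}\bigr)$; so if the \emph{older} point satisfies $|x^{k-1}|\geq 3c$, then $\tfrac{1+|x^{k-1}|}{c}-1>2$, and $x^{k+1}$ jumps to the opposite side with $|x^{k+1}|>2|x^k|$ and in fact $|x^{k+1}|\geq\tfrac{2}{3c}\,|x^{k-1}|\,|x^k|$. (ii) If $x^{k-1},x^k$ lie on \emph{opposite} sides, then, writing $p=|x^{k-1}|$, $q=|x^k|$, one gets $x^{k+1}=x^k(1-\phi)$ with $\phi=\tfrac{(p+q)(1+p)}{c(p+q+2pq)}$; using $c\geq1$, the bound $p+q\leq 2pq$ (valid since $p,q\geq1$) and $q-p\geq q/2$ (valid when $q>2p$), one checks $0<\phi<1$ and $1-\phi\geq\tfrac18$, so in this regime $x^{k+1}$ stays on the same side as $x^k$ and $|x^{k+1}|\geq\tfrac18|x^k|$.

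Now take $x^0=A$ with $A\geq 24c+2$. Since $\ss_0=1$ and $f'(A)\in(1,2)$, $x^1=A-f'(A)\in(A-2,A)$, so $x^0,x^1$ lie on the positive side with $\min(x^0,x^1)\geq 24c$. I claim, by induction on $j\geq1$, that $x^{2j-2},x^{2j-1}$ lie on a common side of $[-1,1]$ and $\gamma_j:=\min(|x^{2j-2}|,|x^{2j-1}|)\geq 24c$. Granting this, (i) applies to $(x^{2j-2},x^{2j-1})$ (since $|x^{2j-2}|\geq\gamma_j\geq 3c$) and gives $x^{2j}$ on the opposite side with $|x^{2j}|>2|x^{2j-1}|$ and $|x^{2j}|\geq\tfrac{2}{3c}\gamma_j^2$; then (ii) applies to $(x^{2j-1},x^{2j})$ (since $|x^{2j}|>2|x^{2j-1}|>2$) and gives $x^{2j+1}$ on the same side as $x^{2j}$ with $|x^{2j+1}|\geq\tfrac18|x^{2j}|\geq\tfrac{\gamma_j^2}{12c}$. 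Because $\gamma_j\geq 24c$ forces $\tfrac{2}{3c}\gamma_j^2\geq 384c>1$ and $\tfrac{\gamma_j^2}{12c}\geq 48c>1$, all these iterates really lie in $\{|x|>1\}$, so the closed forms were legitimate, and the new pair $(x^{2j},x^{2j+1})$ lies on a common side with $\gamma_{j+1}:=\min(|x^{2j}|,|x^{2j+1}|)\geq\tfrac{\gamma_j^2}{12c}\geq 2\gamma_j\geq 24c$. This closes the induction, so $\gamma_j\geq 24c\cdot 2^{j-1}\to\infty$; since $|x^{2j-1}|\geq\gamma_j$ for all $j$, the sequence is unbounded and \eqref{bad_gd} diverges.

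The whole argument is elementary; the part that needs care — and the main obstacle — is the bookkeeping of constants: one must arrange, by choosing $x^0$ large enough in terms of $c$, that \emph{every} iterate stays in the flat region $\{|x|>1\}$ (so the explicit formulas for $\ss_k$ genuinely apply) and that the overshoot step (i) and the damping step (ii) compose to net growth $\gamma_{j+1}\geq\gamma_j^2/(12c)$ rather than mutual cancellation. Choosing the thresholds $|x^{k-1}|\geq 3c$ in (i) and $|x^k|>2|x^{k-1}|$ in (ii) so that the two steps chain into each other is exactly what pins down the requirement $A\geq 24c+2$.
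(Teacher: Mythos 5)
Your proposal is correct and follows essentially the same route as the paper's proof: start far out in the flat region, show that a same-sign pair forces an overshoot to the opposite side with magnitude more than doubled, that an opposite-sign pair keeps the sign while losing at most a constant factor, and chain these by induction into an oscillating, unbounded sequence. Your constants (e.g.\ the $\tfrac18$ damping bound and $A\geq 24c+2$) differ from the paper's bookkeeping with $r>6c$, but the argument is the same in substance and your verification of the closed-form stepsizes is sound.
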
   
The formal proof of this statement is in \hyperlink{appendix}{Appendix}, but
its main idea should be intuitively clear. First, observe that for $x$ with
large absolute value, $f(x)$ behaves mostly like a linear function. However,
$f'(x)$ approaches $-1$ when $x\to -\infty $ and $+1$ when $x\to+\infty$.
Therefore, if $x^k$ and $x^{k-1}$ have the same sign, the local smoothness
estimate will be too optimistic and $x^{k+1}$ will ``leapfrog'' the optimum. In
contrast, if the signs of $x^k$ and $x^{k-1}$ are different, then $x^{k+1}$ will
fail to get sufficiently close to the optimum. It is interesting to remark that on this function both versions of the Barzilai-Borwein method will diverge as well.
\bigskip

Consequently, the answer to the first question is affirmative: we do need some extra condition for the stepsize $\ss_k$.

\paragraph{Second bound.}
The answer to the second question is the opposite: it is indeed an artifact of our previous
analysis. In the next section, we propose an improvement over the previous
version \cite{malitsky2019_descent}. We give a concise presentation in an informal way. 
We keep a more formal style for \cref{sec:larger} where even better version
(also slightly more complicated) will be presented.

\subsection{Improving AdGD}
The analysis of GD usually starts from the standard identity, followed by convexity inequality
\begin{align}
  \label{eq:adgd_main1}
  \n{x^{k+1}-x^{*}}^{2} &= \n{x^{k}-\ss_{k}\nabla f(x^{k}) - x^{*}}^{2} \notag \\
                        &= \n{x^{k}-x^{*}}^{2} - 2\ss_{k}  \lr{\nabla f(x^{k}),x^{k}-x^{*}} + \ss_{k}^{2}\n{\nabla f(x^{k})}^{2}\notag \\
                        &\leq \n{x^{k}-x^{*}}^{2} - 2\ss_{k} \bigl(f(x^{k})-f(x^{*})\bigr) + \ss_{k}^{2}\n{\nabla f(x^{k})}^{2}.
\end{align}

In~\cite{malitsky2019_descent} the only ``nontrivial'' step in the proof was upper bounding
$\ss_k^2\n{\nabla f(x^k)}^2$, that is $\n{x^{k+1}-x^{k}}^{2}$. Now we
do it in a slightly different way. First, we need the following fact.
\begin{lemma}\label{prop1}
  For GD iterates $(x^{k})$ with arbitrary stepsizes, it holds
\begin{equation}\label{eq:prop1}
  \lr{\nabla f(x^{k}), \nabla f(x^{k-1})} \leq \n{\nabla f(x^{k-1})}^{2}.
\end{equation}
\end{lemma}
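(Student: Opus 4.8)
The plan is to exploit the only structural fact available here, namely that $f$ is convex and hence $\nabla f$ is a monotone operator (inequality~\eqref{eq:mono}), combined with the defining relation of the GD iteration. Concretely, from the update~\eqref{eq:GD} at index $k-1$ we have $x^{k}-x^{k-1} = -\ss_{k-1}\nabla f(x^{k-1})$, so the displacement between consecutive iterates is, up to a nonnegative scalar, exactly $-\nabla f(x^{k-1})$.

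First I would apply monotonicity of $\nabla f$ to the pair $x^{k}, x^{k-1}$, which gives
\[
  \lr{\nabla f(x^{k})-\nabla f(x^{k-1}),\, x^{k}-x^{k-1}} \geq 0.
\]
Then I would substitute $x^{k}-x^{k-1} = -\ss_{k-1}\nabla f(x^{k-1})$ into this inequality, obtaining
\[
  -\ss_{k-1}\lr{\nabla f(x^{k})-\nabla f(x^{k-1}),\, \nabla f(x^{k-1})} \geq 0.
\]
Since the stepsizes are nonnegative, dividing by $\ss_{k-1}$ (and, in the degenerate case $\ss_{k-1}=0$, noting that then $x^{k}=x^{k-1}$ so the claimed inequality holds with equality) flips the sign and yields
\[
  \lr{\nabla f(x^{k})-\nabla f(x^{k-1}),\, \nabla f(x^{k-1})} \leq 0,
\]
which is precisely $\lr{\nabla f(x^{k}), \nabla f(x^{k-1})} \leq \n{\nabla f(x^{k-1})}^{2}$ after expanding the inner product.

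There is essentially no serious obstacle in this argument; the only point requiring a word of care is the sign handling when dividing by $\ss_{k-1}$ and the treatment of a possibly zero stepsize, but both are immediate. The conceptual content is simply the observation that the search direction of GD is the negative gradient at the previous point, so monotonicity of $\nabla f$ directly controls the correlation $\lr{\nabla f(x^{k}),\nabla f(x^{k-1})}$. I would present it in two or three lines.
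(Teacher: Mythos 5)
Your proposal is correct and is essentially the paper's own argument: both apply monotonicity of $\nabla f$ to the pair $x^{k},x^{k-1}$ and use the GD relation $x^{k}-x^{k-1}=-\ss_{k-1}\nabla f(x^{k-1})$ to convert the displacement into the previous gradient. The only cosmetic difference is the direction of substitution (the paper replaces $\nabla f(x^{k-1})$ by $\frac{1}{\ss_{k-1}}(x^{k-1}-x^{k})$, you substitute the iterate difference), plus your explicit remark about the degenerate case $\ss_{k-1}=0$.
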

\begin{proof}
  This is just monotonicity of $\nabla f$ in disguise:
  \begin{align*}
    \n{\nabla f(x^{k-1})}^{2} - \lr{\nabla f(x^{k}), \nabla f(x^{k-1})}
    &= \lr{\nabla f(x^{k-1})-\nabla f(x^{k}), \nabla f(x^{k-1})} \\
    &=  \frac{1}{\ss_{k-1}}\lr{\nabla f(x^{k-1})-\nabla f(x^{k}), x^{k-1}-x^{k}}\geq 0.\qedhere
  \end{align*}
\end{proof}

Now we are going to bound $\n{x^{k+1}-x^k}^2$. For convenience, denote the
approximate local Lipschitz constant as
\[L_k = \frac{\n{\nabla f(x^{k})-\nabla f(x^{k-1})}}{\n{x^{k}-x^{k-1}}}.\] 
Let $\ss_{k}$ satisfy
$\ss_{k}\n{\nabla f(x^{k})-\nabla f(x^{k-1})}\leq \gamma \n{x^{k}-x^{k-1}}$ for
some $\gamma >0 $, that is $\ss_k L_k \leq \gamma$. Using
$\n{u}^2 = \n{u-v}^2 - \n{v}^2 + 2\lr{u,v}$, we have
\begin{align}\label{eq:decrease}
  \n{x^{k+1}-x^{k}}^{2} &= \ss_{k}^{2}\n{\nabla f(x^{k})}^{2} \notag\\
                        &= \ss_{k}^{2}\n{\nabla f(x^{k})-\nabla f(x^{k-1})}^{2} -\ss_{k}^{2}\n{\nabla f(x^{k-1})}^{2} + 2\ss_{k}^{2}\lr{\nabla f(x^{k}), \nabla f(x^{k-1})}\notag \\
                        &= \ss_k^2 L_k^{2} \n{x^{k}-x^{k-1}}^{2}  -\ss_{k}^{2}\n{\nabla f(x^{k-1})}^{2} + 2\ss_{k}^{2}\lr{\nabla f(x^{k}), \nabla f(x^{k-1})} \notag\\
                        &\overset{\eqref{eq:prop1}}{\leq} \gamma^{2} \n{x^{k}-x^{k-1}}^{2}  + \ss_{k}^{2}\lr{\nabla f(x^{k}), \nabla f(x^{k-1})}  \notag\\
                        &= \gamma^{2} \n{x^{k}-x^{k-1}}^{2}  + \ss_{k}\th_{k}\lr{\nabla f(x^{k}), x^{k-1}-x^{k}} \notag\\
                        &\leq \gamma^{2} \n{x^{k}-x^{k-1}}^{2}  + \ss_{k}\th_{k} (f(x^{k-1})-f(x^{k})),
\end{align}
where the last inequality follows from convexity of $f$. For $\gamma<1$ we can rewrite~\eqref{eq:decrease}  as
\begin{align*}
  \n{x^{k+1}-x^{k}}^{2} \leq \frac{\gamma^{2}}{1-\gamma^{2}} \n{x^{k}-x^{k-1}}^{2} -  \frac{\gamma^{2}}{1-\gamma^{2}} \n{x^{k+1}-x^{k}}^{2}   + \frac{\ss_{k}\th_{k}}{1-\gamma^{2}} (f(x^{k-1})-f(x^{k})).
\end{align*}
Substituting this inequality into \eqref{eq:adgd_main1} gives us
\begin{align}\label{eq:to_telescope}
  &\n{x^{k+1}-x^*}^2+ \frac{\gamma^{2}}{1-\gamma^{2}}\n{x^{k+1}-x^k}^2 +
  \ss_k\left(2 + \frac{\theta_k}{1-\gamma^2}\right)(f(x^{k})-f_{*}) \nonumber
  \\ \leq \, &\n{x^k-x^*}^2  +
  \frac{\gamma^{2}}{1-\gamma^{2}}\n{x^{k}-x^{k-1}}^2 + \frac{\ss_k\theta_k}{1-\gamma^2}(f(x^{k-1})-f_{*}).
\end{align}
As we want to telescope the above inequality, we require
\begin{align*}
  \frac{\ss_{k}\th_{k}}{1-\gamma^{2}}\leq \ss_{k-1}\left(2 +
    \frac{\th_{k-1}}{1-\gamma^{2}}\right) \iff
  \ss_{k}^2\leq \left(2(1-\gamma^2) + \th_{k-1}\right)\ss_{k-1}^2.
\end{align*}
On the other hand, we have already used that  $  \ss_{k} L_k \leq
\gamma$. These two conditions lead to the bound
\begin{equation}
  \label{eq:alpha_bound}
  \ss_k = \min\left\{ \sqrt{2(1-\gamma^2) + \th_{k-1}}\ss_{k-1}, \frac{\gamma}{L_k} \right\},
\end{equation}
where $\gamma \in (0,1)$ can be arbitrary.
Now by playing with different values of $\gamma$, we obtain
different instances of adaptive gradient descent method. For instance, by setting
% $\gamma = \frac 12$, we get
%\[\ss_k = \min\left\{\sqrt{3/2+ \th_{k-1}}\ss_{k-1}, \frac{1}{2L_k}\right\}.\]
$\gamma = \frac{1}{\sqrt 2}$, we get
\[\ss_k = \min\left\{\sqrt{1+ \th_{k-1}}\ss_{k-1}, \frac{1}{\sqrt 2 L_k}\right\},\]
which is a strict improvement upon the original version
in~\cite{malitsky2019_descent}. A simple reason why this is possible
is that, unlike in \cite{malitsky2019_descent}, we did not resort to
the Cauchy-Schwarz inequality and instead relied on
transformation~\eqref{eq:decrease} and
\Cref{prop1}.

\begin{minipage}[t]{0.43\textwidth}
 \begin{algorithm}[H]
 \caption{Adaptive gradient descent}
 \label{alg:unconstr}
 \begin{algorithmic}[1]
   \STATE \textbf{Input:} $x^0 \in \R^d$, $\th_{0}= 0$, $\ss_0>0$
   \STATE $x^1 =x^0 -\ss_0 \nabla f(x^0)$ 
   \FOR{$k = 1,2,\dots$}
   \STATE $L_k = \dfrac{\n{\nabla f(x^{k})-\nabla f(x^{k-1})}}{\n{x^{k}-x^{k-1}}}$
   \STATE
   $\ss_k = \min\left\{\sqrt{1+\theta_{k-1}}\ss_{k-1}, \dfrac{1}{\sqrt 2 L_k}\right\}$
   \STATE $x^{k+1} = x^k - \ss_k \nabla f(x^k)$
   \STATE $\theta_k = \frac{\ss_k}{\ss_{k-1}}$
    \ENDFOR
 \end{algorithmic}
\end{algorithm}
\end{minipage}
\hfill
\begin{minipage}[t]{0.49\textwidth}
 \begin{algorithm}[H]
 \caption{Adaptive gradient descent-2}
 \label{alg:unconstr-2}
 \begin{algorithmic}[1]
   \STATE \textbf{Input:} $x^0 \in \R^d$, $\th_{0}=\frac 1 3$, $\ss_0>0$
   \STATE $x^1 =x^0 -\ss_0 \nabla f(x^0)$ 
   \FOR{$k = 1,2,\dots$}
   \STATE $L_k = \dfrac{\n{\nabla f(x^{k})-\nabla f(x^{k-1})}}{\n{x^{k}-x^{k-1}}}$
  \STATE
  $\ss_k = \min\left\{ \sqrt{\frac{2}{3} + \theta_{k-1}}\ss_{k-1}, \frac{\ss_{k-1}}{\sqrt{[2\ss_{k-1}^2L_k^2-1]_{+}}}\right\}$
   \STATE $x^{k+1} = x^k - \ss_k \nabla f(x^k)$
   \STATE $\theta_k = \frac{\ss_k}{\ss_{k-1}}$
   \ENDFOR
 \end{algorithmic}
\end{algorithm}
\end{minipage}
\bigskip

We summarize the new scheme in~\Cref{alg:unconstr}.  We do not provide a formal proof for this scheme and hope that inequality~\eqref{eq:to_telescope} should be sufficient for the curious reader to complete the proof. In any case, the next section will contain a further improvement with all the missing proofs.

 \begin{remark}
One might notice that we have used several times monotonicity of
   $\nabla f$, where we actually could use a stronger property of
   cocoercivity~\eqref{eq:coco}. That is true, but we just prefer simplicity. We recommend
   work~\cite{latafat2023adaptive} that exploits cocoercivity in this
   framework.
 \end{remark}

 \section{Adaptive gradient descent: larger stepsize} \label{sec:larger}
 In this section we modify \Cref{alg:unconstr} to use even larger steps 
 resulting in \Cref{alg:unconstr-2}. This, however, will require slightly more
 complex analysis.

Recall the notation $[t]_+ = \max\{t,0\}$ and note that the second bound $\ss_k\leq \frac{\ss_{k-1}}{\sqrt{[2\ss_{k-1}^2L_k^2-1]_{+}}}$ is equivalent to
\begin{equation}
  \label{eq:bound2}
  \ss_k^2 L_k^2 - \frac{\ss_{k}^2}{2\ss_{k-1}^2}\leq \frac 1 2,
\end{equation}
which obviously allows for larger range of $\ss_k$ than
$\ss_k^2L_k^2\leq \frac 1 2$ in \Cref{alg:unconstr}. On the other hand, the
first bound $\ss_k \leq  \sqrt{\frac{2}{3} + \theta_{k-1}}\ss_{k-1}$ is
definitely worse. At the moment, it is not even clear whether it allows $\ss_k$
to increase. 
\begin{remark}
  A notable distinction between \Cref{alg:unconstr-2} and \Cref{alg:unconstr} is
  that the former allows to use a standard fixed step $\ss_k = \frac{1}{L}$,
  provided that $f$ is $L$-smooth. For instance, if we start from
  $\ss_0 = \frac 1 L$ and use $L\geq L_k$ in every iteration (we can always use
  a larger value), then it follows from~\eqref{eq:bound2} and
  $\th_{k-1}\geq \frac 1 3$ that $\ss_k=\frac 1L $ for all $k\geq 1$.
\end{remark}
\paragraph{Initial stepsize.} This is an important though not very exciting topic.
%\begin{remark}
\Cref{alg:unconstr-2} requires an initial  stepsize $\ss_0$.  While, as it will
be proved later, the algorithm converges for any value
$\ss_0>0$ with  the rate
\[\min_{i\in [k]}(f(x^i)-f_*)\leq \frac{R^2}{2\sum_{i=1}^k\ss_i},\]
(see eq.~\eqref{eq:R} for the definition of $R$), the choice of $\ss_0$ will impact further steps due to the bound
$\ss_{k}\leq \sqrt{2/3 + \th_{k-1}}\ss_{k-1}$. Because of this
reason, we do not want to choose $\ss_0$ too small. On the other hand,
too large $\ss_0$ will make $R$ large.  To counterbalance these two
extremes, we suggest to do the following:
\begin{equation}
  \label{eq:ss_0}
  % \text{choose\,  } \ss_0 \text{\,  such that \, } f(x_1)\leq f(x_0) \text{\,
  %   and\,  } \ss_0 L_1 \geq \frac{1}{\sqrt 2}.
    \text{choose\,  } \ss_0 \text{\,  such that \, }  \ss_0 L_1 \in \left[\frac{1}{\sqrt 2}, 2\right].
\end{equation}
The upper bound ensures that $\ss_0$ is not too large, while the lower ensures
that it is not too small either.  In most scenarios, this requires to run a
linesearch, but we emphasize one more time: it is only needed for the first
iteration. In some sense, our condition~\eqref{eq:ss_0} is similar to classical
Goldstein's rule \cite{goldstein1962cauchy} on selecting the stepsize: not too
small and not too big.

Of course, if we start with a very small $\ss_0$, only the first bound for $\ss_k$ will be active for some time, and we will eventually reach a reasonable range for a stepsize. However, linesearch with a more aggressive factor (say, $10$) will allow us to reach this range faster. If we start with $\ss_0=10^{-8}$ when in fact a reasonable range for steps in this region is $[1,10]$, then we will need at least $100$ iterations of our method, while linesearch with a factor $10$ will find it in less than $10$ iterations.

It may happen that the problem is degenerated in a sense that for any $\ss_0$, $\ss_0 L_1 < \frac{1}{\sqrt 2}$. In other words, increasing $\ss_0$ leads to decreasing $L_1$ and linesearch may never stop. In this case we should terminate a linesearch after $\ss_0$ reaches any prescribed value, say $1$.
\subsection{Analysis}
\begin{lemma}\label{lem:decr3}
  For  iterates $(x^k)$ of \Cref{alg:unconstr-2} it holds
\begin{align}\label{eq:decrease3}
  \n{x^{k+1}-x^{k}}^{2} \leq \frac 1 2\n{x^{k}-x^{k-1}}^{2} + \frac 32 \ss_{k}\th_{k}(f(x^{k-1})-f(x^k)).
\end{align}
 \end{lemma}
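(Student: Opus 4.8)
The plan is to mimic the derivation of~\eqref{eq:decrease} from \Cref{sec:improved}, but to feed in the weaker stepsize bound~\eqref{eq:bound2} and then to redistribute the cross term $\lr{\nabla f(x^k),\nabla f(x^{k-1})}$ more economically. The first step is to translate the second bound of \Cref{alg:unconstr-2} into the form actually needed, namely that for every iterate
\[
\ss_k^2 L_k^2 \leq \frac12 + \frac{\th_k^2}{2},\qquad \th_k = \frac{\ss_k}{\ss_{k-1}}.
\]
When $2\ss_{k-1}^2L_k^2>1$ this is a rearrangement of $\ss_k^2\leq\ss_{k-1}^2/(2\ss_{k-1}^2L_k^2-1)$, that is, of~\eqref{eq:bound2}; when $2\ss_{k-1}^2L_k^2\leq1$ the second entry of the $\min$ defining $\ss_k$ is $+\infty$, and one instead uses $\ss_k^2L_k^2=\th_k^2\ss_{k-1}^2L_k^2\leq\th_k^2/2$ directly. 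This inequality is the only place where the stepsize rule enters.

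Next I would expand, exactly as in~\eqref{eq:decrease}, $\n{x^{k+1}-x^k}^2=\ss_k^2\n{\nabla f(x^k)}^2$ via $\n{u}^2=\n{u-v}^2-\n{v}^2+2\lr{u,v}$ with $u=\nabla f(x^k)$, $v=\nabla f(x^{k-1})$, using $\ss_k^2\n{\nabla f(x^k)-\nabla f(x^{k-1})}^2=\ss_k^2L_k^2\n{x^k-x^{k-1}}^2$, to obtain
\[
\n{x^{k+1}-x^k}^2=\ss_k^2L_k^2\n{x^k-x^{k-1}}^2-\ss_k^2\n{\nabla f(x^{k-1})}^2+2\ss_k^2\lr{\nabla f(x^k),\nabla f(x^{k-1})}.
\]
Applying the bound above to the first term and using $\n{x^k-x^{k-1}}^2=\ss_{k-1}^2\n{\nabla f(x^{k-1})}^2$ together with $\th_k^2\ss_{k-1}^2=\ss_k^2$, the $\tfrac{\th_k^2}{2}$-slack turns into $\tfrac{\ss_k^2}{2}\n{\nabla f(x^{k-1})}^2$ and halves the negative term:
\[
\n{x^{k+1}-x^k}^2\leq\frac12\n{x^k-x^{k-1}}^2-\frac{\ss_k^2}{2}\n{\nabla f(x^{k-1})}^2+2\ss_k^2\lr{\nabla f(x^k),\nabla f(x^{k-1})}.
\]
This bookkeeping — that exactly half of $-\ss_k^2\n{\nabla f(x^{k-1})}^2$ survives the use of the looser stepsize bound — is the part I expect to be the main obstacle, since it is what leaves just enough room for the next step.

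To finish, I would split $2\ss_k^2\lr{\nabla f(x^k),\nabla f(x^{k-1})}=\tfrac12\ss_k^2\lr{\nabla f(x^k),\nabla f(x^{k-1})}+\tfrac32\ss_k^2\lr{\nabla f(x^k),\nabla f(x^{k-1})}$. On the first piece I apply \Cref{prop1} (that is,~\eqref{eq:prop1}) to get $\tfrac12\ss_k^2\lr{\nabla f(x^k),\nabla f(x^{k-1})}\leq\tfrac12\ss_k^2\n{\nabla f(x^{k-1})}^2$, which cancels the leftover negative term. On the second piece I write $\ss_k^2=\ss_k\th_k\ss_{k-1}$ and use $\ss_{k-1}\nabla f(x^{k-1})=x^{k-1}-x^k$, then convexity of $f$, $\lr{\nabla f(x^k),x^{k-1}-x^k}\leq f(x^{k-1})-f(x^k)$, to reach $\tfrac32\ss_k\th_k(f(x^{k-1})-f(x^k))$. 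Collecting the surviving terms yields~\eqref{eq:decrease3}. The split ratio is forced: the portion handled through~\eqref{eq:prop1} must not exceed $\tfrac12$ in order to be absorbed by $-\tfrac{\ss_k^2}{2}\n{\nabla f(x^{k-1})}^2$, and one wants the remaining coefficient as small as possible, which pins it at $\tfrac32$ — hence the constant $\tfrac32$ in the statement.
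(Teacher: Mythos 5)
Your proposal is correct and follows essentially the same route as the paper: the same expansion of $\n{x^{k+1}-x^k}^2=\ss_k^2\n{\nabla f(x^k)}^2$ via $\n{u}^2=\n{u-v}^2-\n{v}^2+2\lr{u,v}$, the same use of the stepsize bound~\eqref{eq:bound2} (your form $\ss_k^2L_k^2\leq\frac12+\frac{\th_k^2}{2}$ is just its rearrangement, and your split of the cross term is exactly the paper's absorption of $-\frac{\ss_k^2}{2}\n{\nabla f(x^{k-1})}^2$ via \Cref{prop1}), followed by convexity. No gaps.
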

Before we continue, let us give some intuition for this lemma. Its
analysis follows mostly the same steps as
in~\eqref{eq:decrease}. However, now we will split $\ss_{k}^2\n{\nabla
  f(x^{k-1})}^2$ into two parts and use one of it to improve the
smoothness bound for $\ss_k$.
\begin{proof}
  We start from the third line in \eqref{eq:decrease} and then  apply  the above-mentioned splitting:
\begin{align}%\label{eq:decrease2}
     \n{x^{k+1}-x^{k}}^{2} =\, & \ss_k^2 L_k^{2} \n{x^{k}-x^{k-1}}^{2}  -\ss_{k}^{2}\n{\nabla f(x^{k-1})}^{2} + 2\ss_{k}^{2}\lr{\nabla f(x^{k}), \nabla f(x^{k-1})} \notag\\
                         =\, & \left(\ss_k^2 L_k^{2}-\frac{\ss_k^2}{2\ss_{k-1}^2}\right) \n{x^{k}-x^{k-1}}^{2} - \frac{\ss_{k}^{2}}{2}\n{\nabla f(x^{k-1})}^{2}  + 2\ss_{k}^{2}\lr{\nabla f(x^{k}), \nabla f(x^{k-1})}\notag\\
                           \overset{\eqref{eq:bound2}\&
                          \eqref{eq:prop1}}{\leq}& \frac 12
                        \n{x^{k}-x^{k-1}}^{2}  + \frac
                        32\ss_{k}^2\lr{\nabla f(x^{k}), \nabla f(x^{k-1})}\notag\\
                        =\, & \frac 12 \n{x^{k}-x^{k-1}}^{2}  + \frac 32\ss_{k}\th_{k}\lr{\nabla f(x^{k}), x^{k-1}-x^{k}}.
\end{align}
Convexity of $f$ completes the proof.
\end{proof}
\begin{lemma}\label{lem:unconstr2_energy}
  For  iterates $(x^k)$ of \Cref{alg:unconstr-2} and any solution $x^*$ it holds
\begin{align}\label{lemma_unconstr2_energy}
  &\n{x^{k+1}-x^*}^2+ \n{x^{k+1}-x^k}^2 + \ss_k(2+3\th_k)(f(x^{k})-f_{*})\notag   \\
  \leq \, &\n{x^k-x^*}^2  + \n{x^{k}-x^{k-1}}^2 + 3\ss_{k}\th_{k}(f(x^{k-1})-f_*).
\end{align}

\end{lemma}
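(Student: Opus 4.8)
The plan is to feed \Cref{lem:decr3} into the one-step estimate \eqref{eq:adgd_main1}. Since $\ss_k\nabla f(x^k)=x^k-x^{k+1}$, we have $\ss_k^2\n{\nabla f(x^k)}^2=\n{x^{k+1}-x^k}^2$, so \eqref{eq:adgd_main1} can be rewritten as
\[
  \n{x^{k+1}-x^*}^2 + 2\ss_k\bigl(f(x^k)-f_*\bigr) \leq \n{x^k-x^*}^2 + \n{x^{k+1}-x^k}^2 .
\]
The term $\n{x^{k+1}-x^k}^2$ on the right is the only thing that needs work; everything else is rearrangement.

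The trick is to write $\n{x^{k+1}-x^k}^2 = 2\n{x^{k+1}-x^k}^2 - \n{x^{k+1}-x^k}^2$, move the negative copy to the left-hand side, and bound the remaining $2\n{x^{k+1}-x^k}^2$ by twice the inequality of \Cref{lem:decr3}, that is, by $\n{x^k-x^{k-1}}^2 + 3\ss_k\th_k(f(x^{k-1})-f(x^k))$. This gives
\[
  \n{x^{k+1}-x^*}^2 + \n{x^{k+1}-x^k}^2 + 2\ss_k\bigl(f(x^k)-f_*\bigr) \leq \n{x^k-x^*}^2 + \n{x^k-x^{k-1}}^2 + 3\ss_k\th_k\bigl(f(x^{k-1})-f(x^k)\bigr).
\]
Finally I would add $3\ss_k\th_k(f(x^k)-f_*)$ to both sides: on the left the function terms combine into $\ss_k(2+3\th_k)(f(x^k)-f_*)$, and on the right $3\ss_k\th_k(f(x^{k-1})-f(x^k))+3\ss_k\th_k(f(x^k)-f_*)=3\ss_k\th_k(f(x^{k-1})-f_*)$, which is precisely \eqref{lemma_unconstr2_energy}.

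I do not expect a genuine obstacle here: conditional on \Cref{lem:decr3}, the argument is a one-line algebraic shuffle. The only point demanding a little care is the constant matching — the coefficient of $\n{x^{k+1}-x^k}^2$ that one is forced to absorb on the right is exactly $2$, which is exactly what two copies of \Cref{lem:decr3} provide, and the factor $\tfrac32$ appearing there (itself a consequence of the stepsize rule \eqref{eq:bound2}) is tuned so that the leftover term $3\ss_k\th_k(f(x^{k-1})-f(x^k))$ telescopes cleanly with $\ss_k(2+3\th_k)(f(x^k)-f_*)$ after the final shift. In other words, the real difficulty has already been discharged into \Cref{lem:decr3} and into the design of the second bound on $\ss_k$; this lemma only records the resulting Lyapunov-type inequality in telescopable form.
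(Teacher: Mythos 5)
Your proposal is correct and is essentially the paper's own proof: the paper reformulates \Cref{lem:decr3} as $\n{x^{k+1}-x^k}^2 \leq \n{x^k-x^{k-1}}^2 - \n{x^{k+1}-x^k}^2 + 3\ss_k\th_k(f(x^{k-1})-f(x^k))$ (which is exactly your ``two copies minus one'' manipulation), plugs it into \eqref{eq:adgd_main1}, and then rearranges the function-value terms just as you do. No gaps; the constant matching works out exactly as you describe.
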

\begin{proof}
  From \eqref{eq:decrease3} we have
  \begin{align}\label{eq:decrease3_reform}
  \n{x^{k+1}-x^{k}}^{2} \leq \n{x^{k}-x^{k-1}}^{2}-\n{x^{k+1}-x^k}^2  + 3\ss_{k}\th_{k}(f(x^{k-1})-f(x^k)).
  \end{align}
  Using this inequality in \eqref{eq:adgd_main1}, we get
\begin{align*}
  &\n{x^{k+1}-x^*}^2+ \n{x^{k+1}-x^k}^2 + 2\ss_k(f(x^{k})-f_{*})
   \\ \leq \, &\n{x^k-x^*}^2  + \n{x^{k}-x^{k-1}}^2 + 3\ss_{k}\th_{k}(f(x^{k-1})-f(x^k)),
\end{align*}
which is equivalent to \eqref{lemma_unconstr2_energy}.
\end{proof}

\begin{lemma}\label{lem2:bounded}
  The sequence $(x^k)$ is bounded. In particular, for any solution
  $x^*$ we have  $x^k\in B(x^*, R)$,
  where
  \begin{equation}
    \label{eq:R}
  R^2 = \n{x^0-x^*}^2  + 2\ss_0^2\n{\nabla f(x^0)}^2+\ss_{0}(f(x^{0})-f_*).
  \end{equation}
\end{lemma}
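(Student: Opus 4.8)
The plan is to construct a Lyapunov energy that is nonincreasing along the iterates of \Cref{alg:unconstr-2} and then bound its initial value by $R^2$. Guided by \Cref{lem:unconstr2_energy}, I would set, for $k\geq 1$,
\[
\Phi_k := \n{x^k-x^*}^2 + \n{x^k-x^{k-1}}^2 + 3\ss_k\th_k\bigl(f(x^{k-1})-f_*\bigr),
\]
and observe that all three summands are nonnegative (recall $f(x^{k-1})\geq f_*$), so $\n{x^k-x^*}^2\leq \Phi_k$. Thus the lemma follows once I show (i) $\Phi_{k+1}\leq \Phi_k$ for all $k\geq 1$ and (ii) $\Phi_1\leq R^2$.

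For (i), the point is that \Cref{lem:unconstr2_energy} already telescopes in the variable $\Phi$: its right-hand side is exactly $\Phi_k$, and its left-hand side can be rewritten as $\Phi_{k+1} + \bigl(\ss_k(2+3\th_k)-3\ss_{k+1}\th_{k+1}\bigr)\bigl(f(x^k)-f_*\bigr)$ after using $\ss_{k+1}\th_{k+1}=\ss_{k+1}^2/\ss_k$. The residual coefficient is nonnegative precisely because the first stepsize bound in \Cref{alg:unconstr-2}, namely $\ss_{k+1}^2\leq(\tfrac23+\th_k)\ss_k^2$, rearranges to $3\ss_{k+1}\th_{k+1}\leq\ss_k(2+3\th_k)$; combined with $f(x^k)\geq f_*$ this gives $\Phi_{k+1}\leq\Phi_k$, hence $\Phi_k\leq\Phi_1$ for all $k\geq 1$.

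For (ii), I would plug $x^1=x^0-\ss_0\nabla f(x^0)$ into the three terms of $\Phi_1$: use convexity exactly as in the chain \eqref{eq:adgd_main1} to get $\n{x^1-x^*}^2\leq\n{x^0-x^*}^2-2\ss_0(f(x^0)-f_*)+\ss_0^2\n{\nabla f(x^0)}^2$, and note $\n{x^1-x^0}^2=\ss_0^2\n{\nabla f(x^0)}^2$. The only nontrivial point is the third term: since $\th_0=\tfrac13$, the first bound forces $\ss_1\leq\sqrt{\tfrac23+\tfrac13}\,\ss_0=\ss_0$, so $3\ss_1\th_1=3\ss_1^2/\ss_0\leq3\ss_0$, and the leftover $(3\ss_1\th_1-2\ss_0)(f(x^0)-f_*)$ is at most $\ss_0(f(x^0)-f_*)$. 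Adding up yields exactly $R^2$. Since also $\n{x^0-x^*}^2\leq R^2$ trivially, we conclude $x^k\in B(x^*,R)$ for every $k$.

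I do not anticipate a genuinely hard step here; it is a clean energy argument. The two things to get right are spotting the correct form of $\Phi_k$ (so that \Cref{lem:unconstr2_energy} telescopes verbatim) and verifying that the initialization $\th_0=\tfrac13$ is exactly what makes the base case close with precisely the constant $R^2$ claimed. One should also note that degenerate iterations — $\nabla f(x^k)=\nabla f(x^{k-1})$ (then $L_k=0$ and only the first bound is active) or $x^k=x^{k-1}$ (then $\nabla f(x^{k-1})=0$, a minimizer has been reached, and the sequence is constant thereafter) — do not endanger positivity and finiteness of the stepsizes, so the argument goes through in all cases.
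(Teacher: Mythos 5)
Your proposal is correct and follows essentially the same route as the paper: telescoping the energy inequality of \Cref{lem:unconstr2_energy} via the first stepsize bound $3\ss_{k+1}\th_{k+1}\leq(2+3\th_k)\ss_k$, and then closing the base case with \eqref{eq:adgd_main1}, $\n{x^1-x^0}^2=\ss_0^2\n{\nabla f(x^0)}^2$, and $\th_0=\tfrac13$ to land exactly on $R^2$. Packaging this as a monotone Lyapunov quantity $\Phi_k$ is only a cosmetic reorganization of the paper's argument, and your handling of the degenerate cases is a harmless extra.
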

\begin{proof}
  The first bound for $\ss_k$ in \Cref{alg:unconstr-2} gives us $3\ss_{k}\theta_{k}\leq (2 + 3\theta_{k-1})\ss_{k-1}$. We use it in~\eqref{lemma_unconstr2_energy} and telescope then to obtain
  \begin{align}
    \label{eq:asdaf}
      &\n{x^{k+1}-x^*}^2+ \n{x^{k+1}-x^k}^2 + \ss_k(2+3\th_k)(f(x^{k})-f_{*})\notag   \\
  \leq \, &\n{x^1-x^*}^2  + \n{x^{1}-x^{0}}^2 + \ss_0(2+3\th_0)(f(x^{0})-f_*).
  \end{align}
  This immediately implies that $(x^k)$ is bounded, but we would like to obtain
  the bound without an intermediate iterate $x^1$. From~\eqref{eq:adgd_main1} we know that
  \[\n{x^1-x^*}\leq \n{x^0-x^*}^2  + \ss_0^2\n{\nabla f(x^0)}^2 -2\ss_{0}(f(x^{0})-f_*). \]
Combining it with~\eqref{eq:asdaf}, we deduce
\begin{align*}
%    \label{eq:123a234f}
      &\n{x^{k+1}-x^*}^2+ \n{x^{k+1}-x^k}^2 + \ss_k(2+3\th_k)(f(x^{k})-f_{*}) \\
  \leq \, &\n{x^0-x^*}^2  + 2\ss_0^2\n{\nabla f(x^0)}^2
  +3\th_{0}\ss_0(f(x^{0})-f_*).
  % \leq \, &\n{x^0-x^*}^2  + 2\ss_0^2\n{\nabla f(x^0)}^2
  % +\ss_{0}(f(x^{0})-f_*) = R^2,
  \end{align*}
Using that $\theta_0=\frac 1 3$ completes the proof.
\end{proof}
\begin{remark}
  We could have used $\theta_0=0$ as we did in \Cref{alg:unconstr} which would
  have improved the final constant $R$. However, since the first bound for
  $\ss_k$ is worse this time, we would need a more complicated initial bound for
  $\ss_0$. We decided to keep it simple.
\end{remark}

\textbf{Notation.} For brevity, we write $\ss_k\opt 1$ to denote that in the
$k$-th iteration $\ss_k$ satisfies the first bound, that is
$\ss_{k} = \sqrt{\frac 2 3 + \theta_{k-1}}$. Similarly, for $\ss_k\opt 2$. Also
let $L$ be the Lipschitz constant of $\nabla f$ over the set $B(x^*, R)$. This
means that $L_k\leq L$ for all $k$.

Next few statements are not very important for the first reading, as they
only concern with a lower bound of $\sum_{i=1}^k\ss_i$. The main statement in
\Cref{th:main} is valid independently of them, so the reader can go directly there.

 \begin{lemma}\label{lem:2}
   If $\ss_k\opt 2$, then %$\frac{1}{\ss_{k-1}^2} + \frac{1}{\ss_k^2} = 2L_k^2$
  $\ss_{k}\geq \frac{1}{\sqrt 2 L}$ and $\ss_{k-1}+\ss_k\geq \frac{2}{L}$.
 \end{lemma}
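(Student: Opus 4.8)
The plan is to reduce everything to the equality case of \eqref{eq:bound2}. Since $\ss_k\opt 2$ means the second bound is the binding one, and the first bound $\sqrt{\frac 2 3 + \th_{k-1}}\,\ss_{k-1}$ is finite, the quantity $\frac{\ss_{k-1}}{\sqrt{[2\ss_{k-1}^2 L_k^2-1]_+}}$ must itself be finite; with the convention $\frac a0=+\infty$ this forces $2\ss_{k-1}^2 L_k^2 > 1$, and hence $\ss_k = \frac{\ss_{k-1}}{\sqrt{2\ss_{k-1}^2L_k^2-1}}$. Clearing the denominator — equivalently, reading \eqref{eq:bound2} with equality and multiplying by $2\ss_{k-1}^2$ — this becomes the identity
\[
  2\,\ss_k^2\ss_{k-1}^2 L_k^2 \;=\; \ss_k^2 + \ss_{k-1}^2 .
\]
Using $L_k\le L$ we get $2\,\ss_k^2\ss_{k-1}^2 L^2 \ge \ss_k^2+\ss_{k-1}^2$, and both claims will follow from this single inequality.

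For $\ss_k\ge \frac{1}{\sqrt 2\, L}$, I would divide the last inequality by $\ss_{k-1}^2>0$ to obtain $2\,\ss_k^2 L^2 \ge 1 + \frac{\ss_k^2}{\ss_{k-1}^2} \ge 1$, which rearranges to the claim. For $\ss_{k-1}+\ss_k \ge \frac 2L$, I would bound the right-hand side below by $2\ss_k\ss_{k-1}$ (AM-GM) and cancel the factor $2\ss_k\ss_{k-1}>0$ to get $\ss_k\ss_{k-1}L^2\ge 1$; then $(\ss_{k-1}+\ss_k)^2 \ge 4\ss_{k-1}\ss_k \ge \frac{4}{L^2}$, and taking square roots finishes the proof.

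The computation is short, and the only step requiring a little care is the first one: arguing that $\ss_k\opt 2$ genuinely forces $2\ss_{k-1}^2 L_k^2>1$, so that the $[\cdot]_+$ may be dropped and the division by the square root is legitimate. Once the identity $2\,\ss_k^2\ss_{k-1}^2 L_k^2 = \ss_k^2 + \ss_{k-1}^2$ is available, each of the two inequalities is a one-line consequence.
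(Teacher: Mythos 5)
Your proposal is correct and follows essentially the same route as the paper: both start from the equality case of the second bound, which in your cleared-denominator form $2\ss_k^2\ss_{k-1}^2L_k^2=\ss_k^2+\ss_{k-1}^2$ is exactly the paper's identity $\frac{1}{\ss_{k-1}^2}+\frac{1}{\ss_k^2}=2L_k^2$, and then obtain the first bound by dropping a nonnegative term and the second by an AM--GM argument. Your preliminary remark that $\ss_k\opt 2$ forces $2\ss_{k-1}^2L_k^2>1$ (so the $[\cdot]_+$ can be removed) is a nice explicit justification of a step the paper leaves implicit.
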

 \begin{proof}
  Note that in this case
  $\ss_k = \frac{\ss_{k-1}}{\sqrt{2\ss_{k-1}^2L_k^2 - 1}}$, and hence 
   $\frac{1}{\ss_{k-1}^2} + \frac{1}{\ss_k^2} = 2L_k^2$. This implies that
   $\ss_k\geq \frac{1}{\sqrt 2 L_k}\geq \frac{1}{\sqrt 2 L}$.  By AM-GM
   inequality, 
   \[\left(\frac{1}{\ss_{k-1}^2}+\frac{1}{\ss_k^2}\right)(\ss_{k-1}+\ss_k)^2\geq \frac{2}{\ss_{k-1}\ss_k}\cdot 4\ss_{k-1}\ss_k=8\]
   and the conclusion
   $\ss_{k-1}+\ss_k\geq \sqrt{\frac{8}{2L^2_k}} = \frac{2}{L_k}$ follows.
 \end{proof}

\begin{lemma}\label{lem:small_bound}
  If $\ss_0$ satisfies \eqref{eq:ss_0}, then $\ss_k\geq \frac{1}{\sqrt 3 L}$ for all $k\geq 1$.
\end{lemma}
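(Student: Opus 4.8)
The plan is to split on which of the two defining bounds for $\ss_k$ is active, in the $\opt 1$ / $\opt 2$ notation. If $\ss_k \opt 2$, there is nothing to do: \Cref{lem:2} already gives $\ss_k \geq \frac{1}{\sqrt 2 L} \geq \frac{1}{\sqrt 3 L}$. So the entire argument concerns indices where the first bound is active, and the point is to show that although $\ss_k$ may shrink there, it cannot drift below $\frac{1}{\sqrt 3 L}$.

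The key is that the recursion $\theta_k = \sqrt{2/3 + \theta_{k-1}}$, which holds precisely when $\ss_k \opt 1$, is self-correcting. First I would note that $\theta_{j+1} \geq \sqrt{2/3}$ whenever $\ss_{j+1} \opt 1$, using only $\theta_j \geq 0$; and then, once $\theta_{i-1} \geq \sqrt{2/3}$, the recursion forces $\theta_i = \sqrt{2/3 + \theta_{i-1}} \geq \sqrt{2/3 + \sqrt{2/3}} > 1$. Hence along any maximal run of consecutive $\opt 1$ iterations $i = j+1, \dots, k$ — where $j$ is the last index before $k$ with $\ss_j \opt 2$, or $j = 0$ — every factor $\theta_i$ with $i \geq j+2$ is $\geq 1$, while $\theta_{j+1} \geq \sqrt{2/3}$. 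Telescoping $\ss_k = \ss_j \prod_{i=j+1}^{k}\theta_i$ then yields $\ss_k \geq \sqrt{2/3}\,\ss_j$.

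It remains to bound the ``anchor'' value $\ss_j$ from below by $\frac{1}{\sqrt 2 L}$, since $\sqrt{2/3}\cdot\frac{1}{\sqrt 2 L} = \frac{1}{\sqrt 3 L}$. For $j \geq 1$ this is exactly \Cref{lem:2} (as $\ss_j \opt 2$). For $j = 0$ it follows instead from the initialization rule \eqref{eq:ss_0}: since $\ss_0 L_1 \geq \tfrac{1}{\sqrt 2}$ and $L_1 \leq L$ by boundedness (\Cref{lem2:bounded}), we get $\ss_0 \geq \frac{1}{\sqrt 2 L}$; in fact here $\theta_0 = \tfrac13$ gives $\theta_1 = \sqrt{2/3 + 1/3} = 1$, so the run contributes a full product $\geq 1$ and one even obtains $\ss_k \geq \ss_0 \geq \frac{1}{\sqrt 2 L}$.

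I do not foresee a genuine obstacle; the care is all in the bookkeeping of the runs. One must be precise about why $\theta_{i-1} \geq \sqrt{2/3}$ is available at each step (for free when $i-1 = j+1$, and because that iteration is itself of type $\opt 1$ when $i-1 \geq j+2$), handle the length-one run $k = j+1$ where the estimate collapses to $\ss_k = \ss_j\theta_{j+1}$, treat the boundary case $j=0$ via \eqref{eq:ss_0} rather than \Cref{lem:2}, and fix a tie-break convention (count an iteration where both bounds coincide as $\opt 2$) so that \Cref{lem:2} applies cleanly.
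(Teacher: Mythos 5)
Your argument is correct and is essentially the paper's proof in unrolled form: the paper runs an induction on $k$ whose two cases ($\ss_{k-1}\opt 1$ gives $\th_{k-1}\geq\sqrt{2/3}$, hence $\ss_k\geq\ss_{k-1}$; $\ss_{k-1}\opt 2$ gives $\ss_k\geq\sqrt{2/3}\cdot\frac{1}{\sqrt 2 L}$ via \Cref{lem:2}) are exactly your per-run bookkeeping, with the base case $k=1$ playing the role of your $j=0$ anchor via \eqref{eq:ss_0}. Since both rest on the same two facts (\Cref{lem:2} and $\sqrt{2/3+\sqrt{2/3}}\geq 1$) and yield the same constants, the approaches coincide.
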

\begin{proof}
  We use induction. For $k=1$, we have either $\ss_1 = \sqrt{\frac 23 +
    \th_0}\ss_0\geq \frac{1}{\sqrt 2 L}$ or $\ss_1\opt 2$, which in view of \Cref{lem:2} also implies $\ss_1\geq \frac{1}{\sqrt 2 L}$.
  
  Suppose that $\ss_{k-1}\geq \frac{1}{\sqrt 3 L}$  and
  we must show that $\ss_{k}\geq \frac{1}{\sqrt 3 L}$. If $\ss_k\opt 2$, then we
  are done. Therefore, suppose that $\ss_k\opt 1$. Consider two options for
  $\ss_{k-1}$. If $\ss_{k-1}\opt 1$, then $\theta_{k-1}\geq \sqrt{2/3}$. Thus,
  for $\ss_k$ we have that
  \[\ss_k \geq \sqrt{\frac 23 + \sqrt{\frac 23}}\ss_{k-1} \geq \ss_{k-1}\geq \frac{1}{\sqrt 3 L}.\]
  If $\ss_{k-1}\opt 2$, then $\ss_{k-1}\geq   \frac{1}{\sqrt 2 L}$ and hence
  \[\ss_{k} = \sqrt{\frac 23 + \theta_{k-1}}\ss_{k-1}\geq  \sqrt{\frac 23} \cdot \frac{1}{\sqrt 2 L} = \frac{1}{\sqrt 3 L},\]
  which completes the proof.
\end{proof}
\begin{remark}\label{rem:boundedness}
  It is clear from above proof that condition $\ss_0\geq \frac{1}{\sqrt 2 L_1}$
  from \eqref{eq:ss_0} was used only to give us the basis for induction. Without
  that condition, one can still show in the same way that   $\ss_k\geq \min\{\ss_0, \frac{1}{\sqrt 3 L}\}$.
\end{remark}
Summing this result from $1$ to $k$ yields $\sum_{i=1}^k\ss_i\geq \frac{k}{\sqrt 3 L}$. The stepsize in the previous section 
is lower bounded by a $\sqrt{1}{\sqrt{2}L}$, so it is natural to wonder: why is the current section contains a ``larger stepsize''?  The answer is that while we cannot
show that each individual step is larger, we still show in the next theorem that its \emph{total} length
will be lower bounded by the same quantity.

\begin{theorem}\label{th:main}
Let $f$ be convex with a locally Lipschitz gradient $\nabla f$. Then the sequence
$(x^k)$ generated by \Cref{alg:unconstr-2} converges to a solution and
\begin{equation}\label{steps_denom}
  \min_{i\in [k]} \bigl(f(x^i)-f_*\bigr) \leq    \frac{R^2}{2\sum_{i=1}^k\ss_i},
\end{equation}
where $R$ is defined as in \eqref{eq:R}.  In particular, if $\ss_0$ satisfies \eqref{eq:ss_0}, then
\begin{equation}\label{bounds_denom}
  \min_{i\in [k]} \bigl(f(x^i)-f_*\bigr) \leq  \frac{LR^2}{\sqrt 2 k},
\end{equation}
where $L$ is the Lipschitz constant of $\nabla f$ over $B(x^*, R)$.
\end{theorem}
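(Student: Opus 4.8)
The plan is to build on the telescoped energy inequality from Lemma~\ref{lem:unconstr2_energy}, which after summation (as already executed in the proof of Lemma~\ref{lem2:bounded}) yields the uniform bound \eqref{eq:asdaf}, namely
\[
  \n{x^{k+1}-x^*}^2 + \n{x^{k+1}-x^k}^2 + \ss_k(2+3\th_k)(f(x^k)-f_*) \leq R^2
\]
(after absorbing the $x^1$ terms as in Lemma~\ref{lem2:bounded}). First I would re-telescope Lemma~\ref{lem:unconstr2_energy} more carefully: instead of merely bounding the tail, I want to keep the accumulated sum of function-value gaps. Define $\Phi_k = \n{x^k-x^*}^2 + \n{x^k-x^{k-1}}^2 + 3\ss_{k-1}\th_{k-1}(f(x^{k-1})-f_*)$ (with the convention matching $\theta_0=\tfrac13$). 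The energy inequality gives $\Phi_{k+1} + [\,2\ss_k(f(x^k)-f_*) - (\text{shift terms})\,] \leq \Phi_k$; because the first stepsize bound enforces $3\ss_k\th_k \leq (2+3\th_{k-1})\ss_{k-1}$, the $3\ss\th$ terms telescope cleanly and what survives on the left is $\sum_{i=1}^k 2\ss_i(f(x^i)-f_*)$. Hence $\sum_{i=1}^k \ss_i(f(x^i)-f_*) \leq \tfrac12 R^2$, and bounding each summand below by $\min_{i\in[k]}(f(x^i)-f_*)$ times $\sum_{i=1}^k \ss_i$ gives \eqref{steps_denom}. The second rate \eqref{bounds_denom} then follows immediately from Lemma~\ref{lem:small_bound}, which under \eqref{eq:ss_0} gives $\sum_{i=1}^k \ss_i \geq k/(\sqrt 3 L)$ — wait, that yields $LR^2\sqrt3/(2k)$, so to recover the cleaner constant $LR^2/(\sqrt2 k)$ I would instead pair consecutive steps using Lemma~\ref{lem:2} ($\ss_{k-1}+\ss_k \geq 2/L$ whenever $\ss_k\opt 2$) together with the observation that long runs of $\ss\opt 1$ steps force the stepsize to grow, so that on average $\ss_i \gtrsim 1/(\sqrt2 L)$; this averaging argument over the two stepsize regimes is the delicate bookkeeping step.

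Next I would establish convergence of $(x^k)$ to a solution, not merely the rate. Since $(x^k)$ is bounded (Lemma~\ref{lem2:bounded}), it has a convergent subsequence $x^{k_j}\to \x$. From the summability $\sum \ss_i(f(x^i)-f_*)<\infty$ together with the lower bound $\ss_i \geq 1/(\sqrt3 L)$ from Lemma~\ref{lem:small_bound} (valid on the bounded set $B(x^*,R)$ where $L$ is a genuine Lipschitz constant), I get $\sum_i (f(x^i)-f_*) < \infty$, hence $f(x^i)\to f_*$; by continuity $f(\x)=f_*$, so $\x$ is itself a solution. Now I rerun the energy argument with $x^*$ replaced by this particular $\x$: the quantity $\mathcal E_k := \n{x^k-\x}^2 + \n{x^k-x^{k-1}}^2 + 3\ss_{k-1}\th_{k-1}(f(x^{k-1})-f_*)$ is, after accounting for the telescoping $3\ss\th$ terms, monotonically nonincreasing (this is exactly Lemma~\ref{lem:unconstr2_energy} read as $\mathcal E_{k+1} \le \mathcal E_k - 2\ss_k(f(x^k)-f_*) \le \mathcal E_k$), so $\lim_k \mathcal E_k$ exists. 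Along the subsequence $k_j$ we have $x^{k_j}\to\x$, and $\n{x^{k_j}-x^{k_j-1}}^2 \to 0$ and $3\ss\th(f-f_*)\to 0$ (both because $\sum\ss_i(f(x^i)-f_*)<\infty$ forces the increments to vanish — here I need $\n{x^{k+1}-x^k}^2 \to 0$, which comes directly from Lemma~\ref{lem:decr3} after summation), so $\mathcal E_{k_j}\to 0$; therefore $\lim_k \mathcal E_k = 0$, which forces $\n{x^k-\x}^2 \to 0$, i.e. the whole sequence converges to $\x$.

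The main obstacle I anticipate is twofold. First, the convergence-of-the-whole-sequence argument hinges on having a Fejér-type monotone quantity with respect to a fixed solution, but the natural energy in Lemma~\ref{lem:unconstr2_energy} carries the extra term $3\ss_{k-1}\th_{k-1}(f(x^{k-1})-f_*)$ that only telescopes because of the stepsize rule $3\ss_k\th_k \leq (2+3\th_{k-1})\ss_{k-1}$; I must be careful that this auxiliary term genuinely vanishes along the converging subsequence (it does, since $f(x^{k-1})-f_* \to 0$ and $\ss_{k-1}\th_{k-1}$ is bounded), otherwise monotonicity of $\mathcal E_k$ alone does not pin down the limit. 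Second, there is the subtlety already flagged in the text that $L$ is only a \emph{local} Lipschitz constant, valid on $B(x^*,R)$; the whole scheme is self-consistent precisely because Lemma~\ref{lem2:bounded} confines the iterates to that ball before we ever invoke $L$, so I must present the steps in the right logical order — boundedness first (Lemma~\ref{lem2:bounded}, which uses only monotonicity/convexity and the stepsize rule, no global smoothness), then $L := \mathrm{Lip}(\nabla f; B(x^*,R))$, then the lower bounds on $\ss_k$ and the rate. Getting this ordering explicit, and making sure no estimate secretly used a global Lipschitz constant, is the part that needs genuine care rather than routine computation.
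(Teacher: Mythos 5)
Your derivation of \eqref{steps_denom} follows the paper's route (telescoping \Cref{lem:unconstr2_energy} and absorbing the $x^1$ terms as in \Cref{lem2:bounded}), but one claim is imprecise: after summation the weight multiplying $f(x^i)-f_*$ for $i<k$ is $\ss_i(2+3\th_i)-3\ss_{i+1}\th_{i+1}$, not $2\ss_i$. These weights are merely nonnegative — they vanish exactly when the first stepsize bound is active at step $i+1$ — and all one can say is that their \emph{total} is at least $2\sum_{i=1}^k\ss_i$. That suffices for the $\min$ bound, which is how the paper argues, but it does \emph{not} yield your stronger claim $\sum_i\ss_i(f(x^i)-f_*)\leq R^2/2$. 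For \eqref{bounds_denom}, your ``average $\ss_i\gtrsim 1/(\sqrt2 L)$'' idea is in the right direction but is left as a sketch; the paper needs a separate breakpoint analysis (\Cref{lem:small_step}--\Cref{lem:improved_sum}) to prove $\sum_{i=1}^k\ss_i\geq k/(\sqrt2 L)$, so this part of your proposal is incomplete rather than wrong.

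The genuine gap is in the convergence of $(x^k)$. Your argument rests on the unjustified summability above: since infinitely many weights $\ss_i(2+3\th_i)-3\ss_{i+1}\th_{i+1}$ can be zero, you cannot conclude $\sum_i(f(x^i)-f_*)<\infty$, hence not $f(x^k)\to f_*$ along the whole sequence. Similarly, $\n{x^{k+1}-x^k}\to 0$ does not follow from \Cref{lem:decr3} ``after summation'' (the term $\ss_k\th_k(f(x^{k-1})-f(x^k))$ neither telescopes nor is obviously summable, and $f(x^k)$ is not monotone), and the boundedness of $\ss_{k-1}\th_{k-1}$ you invoke is not available, since $\ss_k$ may tend to $+\infty$. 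The paper closes exactly this hole by a different device: once the iterates are confined to $B(x^*,R)$, it replaces plain convexity in \eqref{eq:adgd_main1} by the cocoercivity-type bound $f(x^*)-f(x^k)\geq\lr{\nabla f(x^k),x^*-x^k}+\frac{1}{2L}\n{\nabla f(x^k)}^2$, which inserts the extra term $\frac{\ss_k}{L}\n{\nabla f(x^k)}^2$ into the energy inequality \eqref{unconstr2_energy2}; telescoping then gives $\sum_k\ss_k\n{\nabla f(x^k)}^2\leq LR^2$ with coefficients bounded away from zero (by \Cref{rem:boundedness}), hence $\nabla f(x^k)\to 0$ and every limit point is a solution, after which the monotone quantity $\n{x^k-x^*}^2+b_k$ (your $\mathcal E_k$) and an Opial-type argument finish the proof. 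Your structural points — boundedness first, only then defining $L$ on $B(x^*,R)$, and the Fej\'er-type use of the energy with a solution limit point — are correct and match the paper; what is missing is a genuinely summable residual (the gradient term), without which your route to ``some limit point is a solution'' and to $\mathcal E_{k_j}\to 0$ does not go through as written.
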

Of course, the important bound here is \eqref{steps_denom}. The second bound only
shows that our choice of stepsizes $\ss_k$ cannot be too bad.
The bound in \eqref{bounds_denom} is stronger than the bound
$\frac{\sqrt 3LR^2}{2 k}$, which could be obtained as a direct consequence
of~\Cref{lem:small_bound}. The derivation of the sharper bound as in
\eqref{bounds_denom} is presented in \Cref{sec:better}.
\begin{proof}
We proceed in the same way as in \Cref{lem2:bounded}, but this time we keep all
the terms that were discarded earlier. Specifically,
summing~\eqref{lemma_unconstr2_energy} over all $k$ yields
\begin{align}
    \label{eq:dfgf}
      &\n{x^{k+1}-x^*}^2+ \n{x^{k+1}-x^k}^2 \notag \\
+\,     & \ss_k(2+3\th_k)(f(x^{k})-f_{*}) +\sum_{i=1}^{k-1}(\ss_i(2+3\theta_i) - 3\ss_{i+1}\theta_{i+1})(f(x^i)-f_*) \notag   \\
  \leq \, &\n{x^1-x^*}^2  + \n{x^{1}-x^{0}}^2 + 3\ss_{1}\th_{1}(f(x^{0})-f_*) \notag \\
     \leq \, & \n{x^0-x^*}^2 + 2\ss_0^2\n{\nabla f(x^0)}^2 + \ss_{0}(f(x^{0})-f_*) =  R^2,
\end{align}
where the last two bounds follow from the same arguments as
in~\Cref{lem2:bounded}. Note that each factor
$(\ss_k(2+3\theta_k) - 3\ss_{k+1}\theta_{k+1})$ is nonnegative and their sum is
\[ \ss_k(2+3\th_k) +\sum_{i=1}^{k-1}(\ss_i(2+3\theta_i)  -3\ss_{i+1}\theta_{i+1}) = 2\sum_{i=1}^k\ss_i + 3\theta_1\ss_1 \geq 2\sum_{i=1}^k\ss_i.\]
Hence, we readily obtain that
\[\min_{i\in[k]}(f(x^i)-f_*) \leq \frac{R^2}{2\sum_{i=1}^k\ss_i}.\]
In particular, if $\ss_0$ satisfies \eqref{eq:ss_0}, then inequality
\eqref{bounds_denom} is a direct consequence of \Cref{lem:improved_sum}, which
we prove in the next section.

It remains to prove that $(x^k)$ converges to a solution. Next arguments will be
similar to the ones in~\cite{malitsky2019_descent}. We have already proved that
$(x^k)$ is bounded. As $f$ is $L$-smooth over $B(x^*, R)$, we have
\[f(x^*) - f(x^k)\geq \lr{\nabla f(x^k), x^*-x^k} + \frac{1}{2L}\n{\nabla f(x^k)}^2.\]
Using this sharper bound instead of plain convexity in~\eqref{eq:adgd_main1} and
repeating the same arguments as in \Cref{lem:unconstr2_energy}, we end up with the same inequality plus the extra term 
\begin{align}\label{unconstr2_energy2}
  &\n{x^{k+1}-x^*}^2+ \n{x^{k+1}-x^k}^2 + \ss_k(2+3\th_k)(f(x^{k})-f_{*}) +\frac{\ss_k}{L}\n{\nabla f(x^k)}^2 \notag   \\
  \leq \, &\n{x^k-x^*}^2  + \n{x^{k}-x^{k-1}}^2 + 3\ss_{k}\th_{k}(f(x^{k-1})-f_*).
\end{align}
Now, by telescoping this inequality we infer that
$\sum_{i=1}^k\frac{\ss_i}{L}\n{\nabla f(x^i)}^2\leq R^2$. Since the sequence
$(\ss_k)$ is separated from $0$ (note that this is independent of condition
\eqref{eq:ss_0} by \Cref{rem:boundedness}), we conclude that
$\nabla f(x^k)\to 0$ as $k\to \infty$. Hence, all limit points of $(x^k)$ are
solutions. Applying $3\theta_k\ss_k\leq (2+3\theta_{k-1})\ss_{k-1}$ in~\eqref{unconstr2_energy2} we get
\[\n{x^{k+1}-x^*}^2 + b_{k+1}\leq \n{x^k-x^*}^2 + b_k,\]
where  $b_k = \n{x^k-x^{k-1}}^2 + \ss_{k-1}(2+3\theta_{k-1})(f(x^{k-1})-f_*) $. Then the convergence
of $(x^k)$ to a solution follows from the standard Opial-type arguments. 
\end{proof}

\subsection{Better bounds for the sum of stepsizes} \label{sec:better}
In this section we prove the bound $\sum_{i=1}^k\ss_i \geq \frac{k}{\sqrt 2 L}$. 
\begin{lemma}\label{lem:small_step} 
 If $\th_k<\frac 1 3$, then $\ss_k\mapsto 2$ and  $\ss_{k-1}L_{k} > \sqrt{5}$,
 $\ss_{k-2}L_k\geq \frac 3 2$, $\ss_{k-3}L_k\geq 1$.
\end{lemma}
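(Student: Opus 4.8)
The plan is to first pin down an a priori upper bound on the ratios $\th_j$, and only then propagate the hypothesis $\th_k<\frac13$ backwards through the first stepsize bound. The preliminary observation is that $\ss_j\le\sqrt{\frac23+\th_{j-1}}\,\ss_{j-1}$ always holds, being one of the two terms in the $\min$ defining $\ss_j$ in \Cref{alg:unconstr-2}; hence $\th_j^2\le\frac23+\th_{j-1}$ for every $j$. Since the map $t\mapsto\sqrt{\frac23+t}$ sends $[0,\frac32]$ into itself (indeed $\sqrt{\frac23+\frac32}=\sqrt{13/6}<\frac32$) and $\th_0=\frac13$, a short induction gives $\th_j\le\frac32$ for all $j\ge0$.

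Next I would show $\ss_k\opt2$. If instead $\ss_k\opt1$, then $\th_k=\sqrt{\frac23+\th_{k-1}}\ge\sqrt{\frac23}>\frac13$, contradicting $\th_k<\frac13$. So the minimum defining $\ss_k$ is attained at the (finite) second term, i.e. $2\ss_{k-1}^2L_k^2-1>0$ and $\ss_k=\ss_{k-1}\big/\sqrt{2\ss_{k-1}^2L_k^2-1}$, whence $\th_k=\bigl(2\ss_{k-1}^2L_k^2-1\bigr)^{-1/2}$. Now $\th_k<\frac13$ reads $2\ss_{k-1}^2L_k^2-1>9$, that is, $\ss_{k-1}L_k>\sqrt5$.

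Finally, for the two remaining inequalities I would divide through the first bound and insert $\th_j\le\frac32$. From $\ss_{k-1}\le\sqrt{\frac23+\th_{k-2}}\,\ss_{k-2}$,
\[
  \ss_{k-2}L_k\ \ge\ \frac{\ss_{k-1}L_k}{\sqrt{\frac23+\th_{k-2}}}\ >\ \frac{\sqrt5}{\sqrt{13/6}}\ =\ \sqrt{\tfrac{30}{13}}\ >\ \tfrac32,
\]
and then, using $\ss_{k-2}\le\sqrt{\frac23+\th_{k-3}}\,\ss_{k-3}$ together with the bound just obtained,
\[
  \ss_{k-3}L_k\ \ge\ \frac{\ss_{k-2}L_k}{\sqrt{\frac23+\th_{k-3}}}\ >\ \frac{3/2}{\sqrt{13/6}}\ =\ \frac{3\sqrt6}{2\sqrt{13}}\ >\ 1
\]
(the last two statements being read for $k\ge2$ and $k\ge3$ respectively, so that all indices are defined). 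The only genuinely non-mechanical point is realizing that the a priori bound $\th_j\le\frac32$ is exactly what makes this go through; after that it is arithmetic. The thing to be careful about is that the slack is thin — $\sqrt{30/13}=1.519\ldots$ versus $\frac32$, and $3\sqrt6/(2\sqrt{13})=1.019\ldots$ versus $1$ — so one should carry the constants exactly rather than estimate loosely.
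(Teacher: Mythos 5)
Your proof is correct and follows essentially the same route as the paper: deduce $\ss_k\opt 2$ from $\th_k<\frac13$, read off $\ss_{k-1}L_k>\sqrt5$ from the explicit formula for $\th_k$, and propagate backwards through the first stepsize bound using an a priori bound on the ratios. The only cosmetic difference is that you use the invariant bound $\th_j\le\frac32$ (hence $\sqrt{13/6}$ in the denominators) where the paper uses the fixed point $t_0=\frac{1+\sqrt{11/3}}{2}$ of $t\mapsto\sqrt{\frac23+t}$; both constants make the arithmetic close, as you verified.
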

\begin{proof}
  By definition, $\ss_{k}\opt 1$ means that
  $\ss_{k}=\sqrt{\frac 2 3 + \theta_{k-1}}\ss_{k-1}$ and thus
  $\theta_{k}\geq \sqrt{\frac 2 3}$. Hence, $\ss_{k}\opt 2$. Then we have that
  $\theta_{k} = \frac{1}{\sqrt{2\ss_{k-1}^2L_{k}^2 -1}} < \frac 1 3$
  which implies $\ss_{k-1}L_{k} > \sqrt 5$. Since we get a large $\ss_{k-1}$,
  the first bound on stepsizes does not allow  previous steps to be much
  smaller. That is the idea we shall use.
  
  For any $k$, we have that $\th_k\leq \sqrt{\frac 23 + \th_{k-1}} $. As
  $\th_0 \leq 1$, it is trivial to prove that
  $\th_k\leq \frac{1+\sqrt{\frac{11}{3}}}{2} \eqqcolon t_0$, which is the root of
  $t-\sqrt{\frac 23 +t} = 0$. 
  From $\ss_{k-1}L_{k} > \sqrt{5}$, it follows that
  \[ \sqrt{5}<\ss_{k-1}L_k\leq \sqrt{\frac 23 + \th_{k-2}}\ss_{k-2} L_{k}\leq t_0 \ss_{k-2}L_k.\]
  Hence, to prove $\ss_{k-2}L_k\geq \frac 32$, it only remains to check that
  $\frac{\sqrt 5}{t_0}\geq \frac 3 2$.

  Similarly, we have
  \[ \frac 32\leq \ss_{k-2}L_k\leq \sqrt{\frac 23 + \th_{k-3}}\ss_{k-3} L_{k}\leq t_0 \ss_{k-3}L_k.\]
And to prove $\ss_{k-3}L_k\geq 1$, we must check that $\frac{3}{2t_0}\geq 1$.
\end{proof}
Given the sequence $(\alpha_k)_{k\ge 1}$, we call its element $\alpha_m$ a \emph{breakpoint}, if $\theta_m<\frac{1}{3}$ and $\alpha_m < \frac{1}{L}$ . The next lemma says that a small step can only occur shortly after a breakpoint.
\begin{lemma}\label{lem:breakpointisclose}
  If $\ss_k <\frac{1}{\sqrt 2L}$, then exactly one of the following holds
  \begin{enumerate}[(i)]
  \item $\ss_{k-1}$ is a breakpoint;
  \item $\ss_{k-1}<\ss_k$ and $\ss_{k-2}$ is a breakpoint.
  \end{enumerate}

\end{lemma}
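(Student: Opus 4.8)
The plan is to reduce everything to the deterministic recursion $\theta_j=\sqrt{\frac23+\theta_{j-1}}$ that governs the steps whenever the first bound is active, combined with the magnitude bounds already available from \Cref{lem:2,lem:small_bound}. Throughout I work, as in the rest of this subsection, under the assumption that $\ss_0$ satisfies~\eqref{eq:ss_0}; since $x^0,x^1\in B(x^*,R)$ by \Cref{lem2:bounded} we have $L_1\le L$, so in particular $\ss_0\ge\frac{1}{\sqrt2 L_1}\ge\frac{1}{\sqrt2 L}$, and by \Cref{lem:small_bound} $\ss_j\ge\frac{1}{\sqrt3 L}$ for every $j\ge1$.

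\emph{Step 1 (the small step is of type $\opt1$ and propagates down).} Since $\ss_k<\frac{1}{\sqrt2 L}$, \Cref{lem:2} rules out $\ss_k\opt2$, hence $\ss_k\opt1$, so $\theta_k=\sqrt{\frac23+\theta_{k-1}}\ge\sqrt{\frac23}>\frac13$; in particular $\ss_k$ is not itself a breakpoint. Moreover $\ss_k\ge\sqrt{\frac23}\,\ss_{k-1}$, so $\ss_{k-1}\le\sqrt{\frac32}\,\ss_k<\frac{\sqrt3}{2L}<\frac1L$, i.e.\ the magnitude requirement for $\ss_{k-1}$ to be a breakpoint is automatic. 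I will reapply this one index lower in Step~3.

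\emph{Step 2 (the dichotomy on $\theta_{k-1}$).} If $\theta_{k-1}<\frac13$, then by Step~1 $\ss_{k-1}$ is a breakpoint, so (i) holds; and (ii) fails, because $\theta_{k-1}<\frac13$ forces $\ss_{k-1}\opt2$ by \Cref{lem:small_step} and then $\ss_{k-1}\ge\frac{1}{\sqrt2 L}>\ss_k$ by \Cref{lem:2}, contradicting ``$\ss_{k-1}<\ss_k$''. If instead $\theta_{k-1}\ge\frac13$, then $\ss_{k-1}$ is not a breakpoint, so (i) fails and we must prove (ii). From $\theta_{k-1}\ge\frac13$ we get $\ss_{k-1}\le\ss_k<\frac{1}{\sqrt2 L}$, so \Cref{lem:2} gives $\ss_{k-1}\opt1$, whence $\theta_{k-1}=\sqrt{\frac23+\theta_{k-2}}\ge\sqrt{\frac23}>\frac13$ \emph{strictly}; this gives $\ss_{k-1}<\ss_k$, the first half of (ii), and also $\ss_{k-2}\le\sqrt{\frac32}\,\ss_{k-1}<\frac1L$, the magnitude half of ``$\ss_{k-2}$ is a breakpoint''.

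\emph{Step 3 (locating the breakpoint).} It remains to show $\theta_{k-2}<\frac13$. Suppose not, i.e.\ $\theta_{k-2}\ge\frac13$ (which is automatic when $k=2$ since $\theta_0=\frac13$). Since $\ss_{k-1}\opt1$ we have $\ss_{k-1}=\sqrt{\frac23+\theta_{k-2}}\,\ss_{k-2}\ge\ss_{k-2}$, so $\ss_{k-2}\le\ss_{k-1}<\frac{1}{\sqrt2 L}$; for $k\ge3$, \Cref{lem:2} then forces $\ss_{k-2}\opt1$, hence $\theta_{k-2}=\sqrt{\frac23+\theta_{k-3}}\ge\sqrt{\frac23}$, which in turn gives $\theta_{k-1}\ge\sqrt{\frac23+\sqrt{\frac23}}$ and $\theta_k\ge\sqrt{\frac23+\sqrt{\frac23}}$, so $\theta_k\theta_{k-1}\ge\frac23+\sqrt{\frac23}$; combined with $\ss_{k-2}\ge\frac{1}{\sqrt3 L}$ this yields
\[ \ss_k=\theta_k\theta_{k-1}\ss_{k-2}\ge\Bigl(\tfrac23+\sqrt{\tfrac23}\Bigr)\tfrac{1}{\sqrt3 L}>\tfrac{1}{\sqrt2 L}, \]
contradicting $\ss_k<\frac{1}{\sqrt2 L}$. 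For $k=2$ the argument is even shorter: $\ss_1\opt1$ and $\theta_0=\frac13$ give $\ss_0=\ss_1\le\ss_2<\frac{1}{\sqrt2 L}$, contradicting~\eqref{eq:ss_0}. Hence $\theta_{k-2}<\frac13$, so $\ss_{k-2}$ is a breakpoint and (ii) holds, while (i) fails by Step~2; thus exactly one of (i),(ii) holds. The only genuinely delicate point is Step~3: one must exclude three consecutive first-bound steps that all remain below $\frac{1}{\sqrt2 L}$, and this is exactly where the fixed multiplicative margin in $\theta_j=\sqrt{\frac23+\theta_{j-1}}$ (forcing $\theta_k\theta_{k-1}>1$ by a definite amount) must be played against the uniform lower bound $\ss_j\ge\frac{1}{\sqrt3 L}$; the boundary cases (small $k$, or an index hitting the unclassified $\ss_0$) need a separate line but cause no real trouble.
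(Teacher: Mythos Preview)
Your proof is correct and follows essentially the same route as the paper's: show $\ss_k\opt1$, verify the magnitude condition $\ss_{k-1}<\tfrac1L$, split on whether $\theta_{k-1}<\tfrac13$, and in the non-breakpoint case push the chain of small first-bound steps one index back and derive a numerical contradiction against $\ss_k<\tfrac{1}{\sqrt2 L}$. The one tactical difference is in your Step~3: you go back to $\ss_{k-2}\opt1$ and bound $\theta_k\theta_{k-1}\ge\tfrac23+\sqrt{\tfrac23}$, whereas the paper simply notes that $\ss_{k-2}\le\ss_{k-1}$ already gives $\theta_{k-1}\ge1$, so $\ss_k\ge\sqrt{\tfrac53}\,\ss_{k-1}\ge\tfrac{\sqrt5}{3L}>\tfrac{1}{\sqrt2 L}$ in one line; on the other hand, you handle the ``exactly one'' and the strict inequality $\ss_{k-1}<\ss_k$ more carefully than the paper does.
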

\begin{proof}
In view of \Cref{lem:2},  the statement implies that $\ss_{k}\mapsto 1$.   Suppose that $\ss_{k-1}$ is
  not a breakpoint, since otherwise we are done. This means that
  either (a) $\ss_{k-1}\geq \frac 1 L$ or (b) $\ss_{k-1}<\frac 1 L$
  and $\th_{k-1}\geq\frac 13$. In the first case we immediately get a contradiction,
  since
  $\ss_{k} = \sqrt{\frac 2 3+\th_{k-1}}\ss_{k-1}\geq \sqrt\frac{2}{3} \frac{1}{L}> \frac{1}{\sqrt 2 L}$.
  Then if we consider (b), the bound $\th_{k-1}\geq \frac 1 3$ implies that
  $\ss_{k-1}\leq \ss_{k}< \frac{1}{\sqrt 2L}$. Then we can apply the same
  arguments as above, but to $\ss_{k-1}$. This means that either $\ss_{k-2}$
  will be a
  breakpoint or we will have a chain
  $\ss_{k-2}\leq \ss_{k-1}\leq \ss_{k}<\frac{1}{\sqrt 2L}$. However, the latter
  option cannot occur, because using $\th_{k-1}\geq 1$ and
  $\ss_{k-1}\geq \frac{1}{\sqrt 3 L}$ ensure us that
  \[\ss_{k}=\sqrt{\frac 23 +\th_{k-1}}\ss_{k-1}\geq \sqrt{\frac23 + 1}\frac{1}{\sqrt 3L} = \frac{\sqrt{ 5}}{ 3L}> \frac{1}{\sqrt 2 L}.\]
\end{proof}

Although a breakpoint indicates that we are in the region with a small stepsize,
\Cref{lem:small_step} guarantees that previous steps were quite large. The next
lemma shows that in total we make a significant progress. 
\begin{lemma}\label{lem:sum5}
  If $\ss_m$ is a breakpoint, then $\sum_{j=-2}^2\ss_{m+j}>\frac{5}{L} $. 
\end{lemma}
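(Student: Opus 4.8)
The plan is to read the conclusion almost directly off \Cref{lem:small_step}: a breakpoint $\ss_m$ means $\th_m<\tfrac13$, and that lemma already certifies that the two steps $\ss_{m-1},\ss_{m-2}$ sitting just before it are comfortably large compared with $\tfrac1L$; the remaining three steps $\ss_m,\ss_{m+1},\ss_{m+2}$ of the five-term window then only need the crude uniform lower bound from \Cref{lem:small_bound}. Concretely, after invoking \Cref{lem:small_step} with $k=m$ and using $L_m\leq L$, I obtain
\[
  \ss_{m-1} \;>\; \frac{\sqrt5}{L}, \qquad \ss_{m-2} \;\geq\; \frac{3}{2L}.
\]

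Next I would bound each of $\ss_m,\ss_{m+1},\ss_{m+2}$ below by $\tfrac{1}{\sqrt3 L}$ via \Cref{lem:small_bound} (recalling that throughout this section $\ss_0$ is assumed to satisfy \eqref{eq:ss_0}; alternatively one may use the sharper $\ss_m\geq\tfrac{1}{\sqrt2 L}$ coming from \Cref{lem:2}, since $\th_m<\tfrac13$ forces $\ss_m\opt 2$, but this is not needed). Summing the five estimates,
\[
  \sum_{j=-2}^{2}\ss_{m+j} \;>\; \frac1L\Bigl(\frac32+\sqrt5+3\cdot\tfrac{1}{\sqrt3}\Bigr) \;=\; \frac1L\Bigl(\frac32+\sqrt5+\sqrt3\Bigr),
\]
and it only remains to note the elementary inequality $\tfrac32+\sqrt5+\sqrt3>5$ (indeed $\sqrt5+\sqrt3>3.9$), which closes the argument.

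There is no genuine obstacle here: once \Cref{lem:small_step} is in hand the statement is essentially bookkeeping, and the final numerical inequality has ample slack. The one place that warrants a second look is the range of indices, since \Cref{lem:small_step} as stated refers to $\ss_{k-3}$. For this I would check that a breakpoint can occur only for $m\geq 2$ (it cannot occur at $m=1$, because \eqref{eq:ss_0} forces $\ss_0L_1\leq 2<\sqrt5$, whence $\th_1\geq\tfrac13$) and treat $m=2$ by hand: there $\ss_{m-2}=\ss_0$, and $\ss_0L_2\geq\tfrac32$ follows from $\ss_1\leq\sqrt{\tfrac23+\th_0}\,\ss_0=\ss_0$ together with $\ss_1L_2>\sqrt5$ given by the first part of \Cref{lem:small_step}; for $m\geq 3$ every index invoked is legitimate.
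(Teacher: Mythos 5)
Your proof is correct and follows essentially the same route as the paper: invoke \Cref{lem:small_step} for the two large steps $\ss_{m-1},\ss_{m-2}$ before the breakpoint and the uniform lower bounds of \Cref{lem:small_bound} (or \Cref{lem:2}) for $\ss_m,\ss_{m+1},\ss_{m+2}$, then add up. The only differences are cosmetic: the paper uses the sharper $\ss_m\geq\frac{1}{\sqrt 2 L}$ to get the constant $5.59$ while your $\frac32+\sqrt5+\sqrt3>5.46$ suffices, and your explicit check of the index range ($m\geq 2$, with $m=2$ handled separately) is a careful touch the paper leaves implicit.
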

\begin{proof}
  If $\ss_{m}$ is a breakpoint, then on one hand \Cref{lem:small_step} implies
  that $\ss_{m-1}\geq \frac{\sqrt 5}{L_m}$, $\ss_{m-2}\geq \frac{3}{2L_m}$. On
  the other hand, we have that $\ss_{m}\geq \frac{1}{\sqrt 2L}$,
  $\ss_{m+1}\geq \frac{1}{\sqrt 3L}$, and
  $\ss_{m+2}\geq \frac{1}{\sqrt 3L}$. Combining, we get
  \[\sum_{j=-2}^2\ss_{m+j}L \geq  \frac 32 +\sqrt 5 + \frac{1}{\sqrt 2} + \frac{2}{\sqrt 3} > 5.59. \]
\end{proof}

\begin{lemma}\label{lem:improved_sum} 
  If $\ss_0$  satisfies \eqref{eq:ss_0}, then
  for any $k\geq 1$ we have
  \begin{equation}
    \label{eq:2lkr}
  \sum_{i=1}^k\ss_i \geq \frac{k}{\sqrt 2 L}.
  \end{equation}
\end{lemma}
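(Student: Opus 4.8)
The plan is an amortized counting argument: the ``small'' steps, meaning indices $i$ with $\ss_i<\frac{1}{\sqrt2L}$, are rare, and each can be charged against a nearby ``large'' step whose surplus over $\frac{1}{\sqrt2L}$ more than compensates, so that on average every step has length at least $\frac{1}{\sqrt2L}$.

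First I would record two preliminary facts. (1) The index $1$ is not a breakpoint: a breakpoint at index $1$ would, by the reasoning of \Cref{lem:small_step} (applied with $k=1$, using $\theta_0=\frac13$), force $\ss_0L_1>\sqrt5$, which is incompatible with $\ss_0L_1\le 2$ from \eqref{eq:ss_0}; hence every breakpoint $m$ satisfies $m\ge2$, so the index $m-1$ is legitimate. (2) If $\ss_m$ is a breakpoint then by \Cref{lem:small_step} and $L_k\le L$ each of $\ss_{m-1},\ss_{m-2},\ss_{m-3}$ is at least $\frac1L$, so none of them is a breakpoint; thus consecutive breakpoints are at least $4$ indices apart.

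Next I would pin down where small steps can live. By \Cref{lem:breakpointisclose}, if $\ss_i<\frac1{\sqrt2L}$ then either $\ss_{i-1}$ is a breakpoint, or $\ss_{i-1}<\ss_i$ and $\ss_{i-2}$ is a breakpoint; in the latter situation $\ss_{i-1}$ is itself a small step whose preceding breakpoint is $\ss_{i-2}$. Hence every small index in $[k]$ lies in $\{m+1,m+2\}$ for some breakpoint $m$, at most two per breakpoint, and moreover ``$m+2$ small'' forces ``$m+1$ small''. Now comes the accounting. Let $M$ be the set of breakpoints $m$ for which $\{m+1,m+2\}$ meets the small indices of $[k]$; for $m\in M$ one has $m+1\le k$, so put $S_m=\{m-1\}\cup\bigl(\{m+1,m+2\}\cap[k]\cap\{\text{small}\}\bigr)\subseteq[k]$, with $|S_m|\in\{2,3\}$ (using $m\ge2$). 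Because breakpoints are $\ge4$ apart, the blocks $\{m-1,m+1,m+2\}$, and hence the sets $S_m$, are pairwise disjoint; and every small index of $[k]$ lies in some $S_m$, its charging breakpoint lying in $M$. For $m\in M$ we have $\ss_{m-1}L\ge\ss_{m-1}L_m>\sqrt5$, so, the small steps contributing nonnegatively, $L\sum_{i\in S_m}\ss_i>\sqrt5\ge\frac{3}{\sqrt2}\ge\frac{|S_m|}{\sqrt2}$. Every index of $[k]$ outside $\bigcup_{m\in M}S_m$ is, by construction, not small, hence contributes $\ss_iL\ge\frac1{\sqrt2}$. Summing over the partition of $[k]$ into $\bigl(\bigcup_{m\in M}S_m\bigr)$ and its complement yields $L\sum_{i=1}^k\ss_i\ge\frac k{\sqrt2}$, which is \eqref{eq:2lkr}.

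The main obstacle is the boundary bookkeeping: guaranteeing that the sponsor index $m-1$ always lies in $[k]$ (this is exactly why the preliminary step ruling out an early breakpoint is needed to get $m\ge2$, together with $m\le k-1$ for $m\in M$), and guaranteeing that no index is charged twice — which is precisely the $\ge4$ spacing of breakpoints, and is the reason \Cref{lem:small_step} must supply all three estimates $\ss_{m-1}L_m>\sqrt5$, $\ss_{m-2}L_m\ge\frac32$, $\ss_{m-3}L_m\ge1$ and not merely the first. Everything else reduces to the numeric inequality $\sqrt5\ge\frac3{\sqrt2}$, which holds comfortably.
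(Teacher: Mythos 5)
Your argument is correct, and it is the same amortized, breakpoint-based strategy as the paper's proof: small steps (those below $\frac{1}{\sqrt2 L}$) can only occur right after a breakpoint (\Cref{lem:breakpointisclose}), and the oversized step $\ss_{m-1}$ preceding a breakpoint (\Cref{lem:small_step}) pays for them, while all other steps contribute at least $\frac{1}{\sqrt2 L}$ each. The bookkeeping, however, is genuinely different and arguably tighter. The paper groups the five steps $\ss_{m-2},\dots,\ss_{m+2}$ around each breakpoint $m$ with $\ss_{m+1}<\frac{1}{\sqrt2L}$ and invokes \Cref{lem:sum5} to bound each block by $\frac{5}{L}$, patching the boundary case $k-1\in\mathcal{M}$ with a separate remark; you instead charge only the sponsor $m-1$ together with the small indices among $\{m+1,m+2\}\cap[k]$, so your blocks have at most three elements and everything reduces to $\ss_{m-1}L_m>\sqrt5$ plus the numeric fact $\sqrt5\geq\frac{3}{\sqrt2}$, bypassing \Cref{lem:sum5} and the $\frac{1}{\sqrt3L}$ bounds entirely. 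You also make explicit two points the paper leaves implicit: under \eqref{eq:ss_0} no breakpoint can occur at $m=1$ (so the sponsor index $m-1$ exists in $[k]$), and consecutive breakpoints are at least four indices apart (so the charging sets are pairwise disjoint and no step is counted twice); truncating to $[k]$ disposes of the boundary case cleanly. The only microscopic point left unsaid is $i\in\{1,2\}$, where the dichotomy of \Cref{lem:breakpointisclose} would point to a breakpoint at index $1$ or $0$; but your observation that breakpoints satisfy $m\geq2$ (and that $\ss_0$ is not a breakpoint since $\th_0=\frac13$) shows precisely that these indices cannot be small, so nothing breaks.
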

\begin{proof}
Let $\mathcal{M} = \{m \text{ is a breakpoint:
} \ss_{m+1}<\frac{1}{\sqrt 2L} \}$. We can split $\sum_{i=1}^k\ss_{i}$ into two
terms as
\begin{equation}
  \label{eq:asdasf2}
\sum_{i=1}^k \ss_i = \sum_{m\in \mathcal{M}}\sum_{j=-2}^2\ss_{m+j} + \text{rest}.  
\end{equation}
We claim that elements in the ``rest'' are greater or equal than
$\frac{1}{\sqrt 2 L}$. Indeed, if $\ss_i<\frac{1}{\sqrt 2L}$ is in the ``rest''
term, then either $\ss_{i-1}$ is a breakpoint or $\ss_{i-1}<\frac{1}{\sqrt 2L}$
and $\ss_{i-2}$ is a breakpoint, as \Cref{lem:breakpointisclose} suggests. In
either case, $\ss_i$ must be included in the
first sum, by the definition of $\mathcal{M}$.

Now let us estimate both terms. The first sum in~\eqref{eq:asdasf2} is greater than
$\frac{5|\mathcal{M}|}{L} > \frac{5|\mathcal{M}|}{\sqrt 2 L}$, by
\Cref{lem:sum5}. The total sum in the ``rest'' term is not less  than $\frac{k-5|\mathcal{M}|}{\sqrt 2 L}$. Hence, the desired
inequality follows. It has to be only noted that if $k-1 \in \mathcal{M}$, we
have to additionally consider the sum
$\sum_{j=-2}^1 \ss_{k-1+j}\geq \frac{4}{\sqrt 2 L}$, for which the bound follows
from the same arguments as in \Cref{lem:sum5}.
\end{proof}
\begin{remark}
  It is obvious that our analysis was not optimal. For instance, whenever
  $\ss_k\mapsto 2$, we could use $\ss_{k-1}+\ss_k\geq \frac{2}{L}$ instead of
  more conservative $\frac{2}{\sqrt 2 L}$. Similarly, we got a much better bound for
  every breakpoint. However, we did not want to overcomplicate an already
  tedious  examination. We leave it as an open question if one can provide a
  bound closer to $ \frac{k}{L}$ (or better?) with a readable proof.
\end{remark}

\section{Adaptive proximal gradient method}\label{sec:prox}
In this section we turn to a more general problem of  a composite optimization problem
\begin{equation}
  \label{eq:comp}
  \min_{x}F(x)\coloneqq f(x) + g(x),
\end{equation}
where $g\colon \R^d\to \eR$ is a convex lsc function and $f\colon \R^d \to \R$
is a convex differentiable function with locally Lipschitz
$\nabla f$. Additionally, we assume that $g$ is prox-friendly, that is we can
efficiently compute its proximal mapping $\prox_g = (\id + \partial g)^{-1}$. As before, we suppose that \eqref{eq:comp} has a solution.
 \begin{algorithm}%[t]
 \caption{Adaptive proximal gradient method}
 \label{alg:prox}
 \begin{algorithmic}[1]
   \STATE \textbf{Input:} $x^0 \in \dom g$, $\th_{0}=\frac 1 3$, $\ss_0>0$
   \STATE $x^1 =\prox_{\ss_0 g}(x^0 -\ss_0 \nabla f(x^0))$ 
   \FOR{$k = 1,2,\dots$}
   \STATE $L_k = \dfrac{\n{\nabla f(x^{k})-\nabla f(x^{k-1})}}{\n{x^{k}-x^{k-1}}}$
    \STATE
    $\ss_k = \min\left\{ \sqrt{\frac{2}{3} + \theta_{k-1}}\ss_{k-1}, \frac{\ss_{k-1}}{\sqrt{[2\ss_{k-1}^2L_k^2-1]_{+}}}\right\}$
    \hfill  {\color{gray} // $[t]_+\coloneqq\max(t, 0)$}
\STATE $x^{k+1} = \prox_{\ss_k g}(x^k - \ss_k \nabla f(x^k))$
\STATE $\theta_k = \frac{\ss_k}{\ss_{k-1}}$
    \ENDFOR
 \end{algorithmic}
\end{algorithm}

As one can notice, the only difference between \Cref{alg:prox} and
\Cref{alg:unconstr-2} is the presence of the proximal mapping. The analysis, however, is not
a straightforward generalization.

Recall that the second bound for the stepsize $\ss_k$ is equivalent to
\begin{equation}
  \label{eq:step_prox}
  \ss_k^2\left(L_k^2 - \frac{1}{2\ss_{k-1}^2}\right)\leq \frac 1 2.
\end{equation}
We can rewrite $x^{k+1} = \prox_{\ss_k g}(x^k - \ss_k \nabla f(x^k))$ as an implicit equation
\begin{equation}
  \label{eq:implicit}
  x^{k+1} = x^k - \ss_k (\nabla f(x^k) + \sbg{k+1}),
\end{equation}
where $\sbg{k+1}$ is a certain subgradient of $g$ at $x^{k+1}$, that is $\sbg{k+1} \in \partial g(x^{k+1})$. For this particular subgradient we will also use notation
\[\Sbg{k} = \nabla F(x^k)+\sbg{k}.\]
First, we adapt our basic inequality~\eqref{eq:adgd_main1} to the more general case. By prox-inequality, we have
\begin{align}\label{eq:1234d}
  \lr{x^{k+1}-x^k + \ss_k \nabla f(x^k), x-x^{k+1}} \geq \ss_k(g(x^{k+1})-g(x)), \quad \forall x.
\end{align}
Then we set $x=x^*$ above and  transform it into
\begin{align}\label{eq:ldkjfpo}
  \n{x^{k+1}-x^*}^2+2\ss_k (g(x^{k+1})-g(x^*)) \leq \n{x^k-x^*}^2 + 2\ss_k \lr{\nabla f(x^k), x^*-x^{k+1}} - \n{x^{k+1}-x^k}^2.
\end{align}
This standard inequality is at the heart of the analysis of the proximal
gradient method. To complete the full proof, or rather to get the final
inequality, the classical analysis only requires applying one convexity inequality and
one descent lemma to function $f$. Our analysis, however, will be different.  The main
nuisance is that in the $k$-th iteration the proximal map yields us $g(x^{k+1})-g(x^*)$ term, while our
adaptivity approach works with $f(x^k)-f(x^*)$, as we remember from before. Thus,
our first obstacle is to understand how to combine these two terms.

First, we estimate the term $\lr{\nabla f(x^k), x^*-x^{k+1}}$ in the RHS of~\eqref{eq:ldkjfpo}.
We have
\begin{align*}
   \lr{\nabla f(x^k), x^*-x^{k+1}}  &= \lr{\nabla f(x^k), x^*-x^{k}} + \lr{\nabla f(x^k), x^k-x^{k+1}} \\
  &=\lr{\nabla f(x^k), x^*-x^{k}}+ \lr{\nabla f(x^k) + \sbg{k}, x^k-x^{k+1}} + \lr{\sbg{k}, x^{k+1}-x^k} \\
  &\leq f(x^*)-f(x^k) +\lr{\nabla f(x^k) + \sbg{k}, x^k-x^{k+1}} + g(x^{k+1})-g(x^k),
\end{align*}
where in the last inequality we used separately convexity of $f$ and
$g$. Applying this inequality in~\eqref{eq:ldkjfpo} yields
\begin{align}\label{eq:basic_prox}
  & \,   \n{x^{k+1}-x^*}^2+2\ss_k (F(x^{k})-F(x^*)) \notag \\
  \leq &\, \n{x^k-x^*}^2 + 2\ss_k\lr{\nabla f(x^k) + \sbg{k}, x^k-x^{k+1}} - \n{x^{k+1}-x^k}^2 \notag\\
  \leq &\,\n{x^k-x^*}^2 + \ss_k^2\n{\nabla f(x^k) + \sbg{k}}^2.
\end{align}
As we see, the final inequality is very much in the spirit of
\eqref{eq:adgd_main1}.

\begin{lemma}[Compare to~\Cref{prop1}]\label{prop1_prox}
  For iterates $(x^{k})$ with arbitrary stepsizes, it holds
  \begin{equation}\label{eq:scalar_prod_to_norm}
    \lr{\nabla f(x^{k}) + \sbg{k}, \nabla f(x^{k-1}) + \sbg{k}} \leq \n{\nabla f(x^{k-1}) + \sbg{k}}^{2}.
  \end{equation}
\end{lemma}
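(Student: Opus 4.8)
The plan is to mimic the proof of \Cref{prop1} almost verbatim, the only new ingredient being the implicit form \eqref{eq:implicit} of the proximal step. The essential observation is that, writing \eqref{eq:implicit} at index $k-1$, namely $x^k = x^{k-1} - \ss_{k-1}(\nabla f(x^{k-1}) + \sbg{k})$, the vector $\nabla f(x^{k-1}) + \sbg{k}$ is precisely $\frac{1}{\ss_{k-1}}(x^{k-1}-x^k)$, i.e.\ the full (gradient plus chosen subgradient) direction along which $x^k$ was produced. Moreover the \emph{same} element $\sbg{k}\in\partial g(x^k)$ sits in both arguments of the inner product in \eqref{eq:scalar_prod_to_norm}, so it will cancel when we form the defect.

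Concretely, I would compute
\[
\n{\nabla f(x^{k-1}) + \sbg{k}}^2 - \lr{\nabla f(x^{k}) + \sbg{k},\, \nabla f(x^{k-1}) + \sbg{k}} = \lr{\bigl(\nabla f(x^{k-1}) + \sbg{k}\bigr) - \bigl(\nabla f(x^{k}) + \sbg{k}\bigr),\, \nabla f(x^{k-1}) + \sbg{k}},
\]
where the leading factor simplifies to $\nabla f(x^{k-1}) - \nabla f(x^{k})$ since $\sbg{k}$ cancels. Substituting $\nabla f(x^{k-1}) + \sbg{k} = \frac{1}{\ss_{k-1}}(x^{k-1}-x^k)$ from \eqref{eq:implicit} then turns the right-hand side into $\frac{1}{\ss_{k-1}}\lr{\nabla f(x^{k-1}) - \nabla f(x^{k}),\, x^{k-1}-x^k}$, which is nonnegative by monotonicity of $\nabla f$ \eqref{eq:mono} (indeed cocoercivity \eqref{eq:coco}). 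This is exactly \eqref{eq:scalar_prod_to_norm}, and the argument mirrors \eqref{eq:prop1} line for line, with $\nabla f(x^{k-1})$ replaced by the composite step direction $\nabla f(x^{k-1}) + \sbg{k}$.

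There is no genuine obstacle here; the single point requiring care is the indexing convention for the subgradient, i.e.\ that $\sbg{k}$ denotes the specific element of $\partial g(x^k)$ pinned down by \eqref{eq:implicit} at step $k-1$ — it is this element, and not an arbitrary subgradient at $x^k$, that makes the step-direction identity available and that appears in the statement. With that convention fixed, the proof uses only algebra and monotonicity of $\nabla f$; no property of $g$ beyond convexity is needed (and convexity of $g$ was already spent upstream in deriving \eqref{eq:basic_prox}).
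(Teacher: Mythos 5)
Your proposal is correct and matches the paper's proof essentially verbatim: both compute the defect $\lr{\nabla f(x^{k-1})-\nabla f(x^{k}),\,\nabla f(x^{k-1})+\sbg{k}}$, identify $\nabla f(x^{k-1})+\sbg{k}=\frac{1}{\ss_{k-1}}(x^{k-1}-x^{k})$ via \eqref{eq:implicit}, and conclude by monotonicity of $\nabla f$. Your remark on the indexing convention for $\sbg{k}$ is exactly the point the paper relies on implicitly.
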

\begin{proof}
  As before, this is just monotonicity of $\nabla f$ in disguise:
  \begin{align*}
    &\n{\nabla f(x^{k-1})+\sbg{k}}^{2} - \lr{\nabla f(x^{k}) +\sbg{k}, \nabla f(x^{k-1}) + \sbg{k}} \\
    &\qquad= \lr{\nabla f(x^{k-1})-\nabla f(x^{k}), \nabla f(x^{k-1}) + \sbg{k}} \\
        &\qquad =  \frac{1}{\ss_{k-1}}\lr{\nabla f(x^{k-1})-\nabla f(x^{k}), x^{k-1}-x^{k}} \geq 0.\qedhere
  \end{align*}
\end{proof}

The next lemma is special for the composite case. Although it looks like this fact should be known in the literature, we were not able to identify it.
\begin{lemma}\label{lem:subgrad_norm}
  For iterates $(x^{k})$ of the proximal gradient method with arbitrary stepsizes, it holds
        \begin{equation}\label{eq:subgrad_prox}
                \|\nabla f(x^k) + \sbg{k+1}\| \le \|\nabla f(x^k) + \sbg{k}\|.
        \end{equation}
\end{lemma}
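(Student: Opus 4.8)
The plan is to exploit the nonexpansiveness of the proximal operator $\prox_{\ss_k g}$, together with the observation that both $x^{k+1}$ and $x^k$ can be realized as images of $\prox_{\ss_k g}$ --- crucially, with the \emph{same} stepsize $\ss_k$.

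First, by definition $x^{k+1} = \prox_{\ss_k g}(x^k - \ss_k\nabla f(x^k))$. Second --- and this is the only step requiring a moment's thought --- I would rewrite $x^k$ itself as a proximal point: since $\sbg{k}\in\partial g(x^k)$, the characterization $w = \prox_{\la g}(z)\iff z-w\in\la\,\partial g(w)$ applied with $z = x^k + \ss_k\sbg{k}$ and $w = x^k$ gives $x^k = \prox_{\ss_k g}(x^k + \ss_k\sbg{k})$, because $(x^k+\ss_k\sbg{k}) - x^k = \ss_k\sbg{k}\in\ss_k\,\partial g(x^k)$. Note this reuses the subgradient $\sbg{k}$ (which originated from the prox step at iteration $k-1$ with stepsize $\ss_{k-1}$) but now pairs it with the \emph{current} stepsize $\ss_k$, which is exactly what makes the two prox images directly comparable.

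Then nonexpansiveness of $\prox_{\ss_k g}$ immediately yields
\[
  \n{x^{k+1}-x^k} \le \n{(x^k - \ss_k\nabla f(x^k)) - (x^k + \ss_k\sbg{k})} = \ss_k\n{\nabla f(x^k) + \sbg{k}}.
\]
On the other hand, the implicit form~\eqref{eq:implicit} gives $x^{k+1} - x^k = -\ss_k(\nabla f(x^k) + \sbg{k+1})$, so the left-hand side equals $\ss_k\n{\nabla f(x^k) + \sbg{k+1}}$. Dividing by $\ss_k>0$ finishes the argument. There is essentially no computational obstacle; the entire content is the re-expression of $x^k$ as a $\prox_{\ss_k g}$-image, after which firm nonexpansiveness does the rest in one line.
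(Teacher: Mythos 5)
Your proof is correct, and it takes a genuinely different (though ultimately equivalent) route from the paper's. The paper argues directly from monotonicity of $\partial g$: it decomposes $\nabla f(x^k)+\sbg{k} = (\nabla f(x^k)+\sbg{k+1})+(\sbg{k}-\sbg{k+1})$, rewrites $\nabla f(x^k)+\sbg{k+1}=\frac{1}{\ss_k}(x^k-x^{k+1})$ via \eqref{eq:implicit}, expands the square, drops the nonnegative term $\n{\sbg{k}-\sbg{k+1}}^2$, and lower-bounds the cross term by $0$ using $\lr{x^k-x^{k+1},\sbg{k}-\sbg{k+1}}\geq 0$. You never decompose; instead you realize both $x^{k+1}$ and $x^k$ as images of the \emph{same} operator $\prox_{\ss_k g}$ --- the re-expression $x^k=\prox_{\ss_k g}(x^k+\ss_k\sbg{k})$, which deliberately pairs the old subgradient $\sbg{k}\in\partial g(x^k)$ with the current stepsize, is the key observation --- and then invoke nonexpansiveness, reading off the left-hand side from \eqref{eq:implicit}. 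The two arguments have the same substance, since nonexpansiveness of $\prox_{\ss_k g}$ is exactly monotonicity of $\partial g$ in operator form, and unwinding the standard nonexpansiveness proof at your two points essentially reproduces the paper's computation. What your version buys is a one-line conceptual proof via a standard black box that also makes transparent why the common stepsize $\ss_k$ is what makes the comparison possible; what the paper's buys is self-containedness and consistency with its running ``monotonicity in disguise'' theme (\Cref{prop1} and \Cref{prop1_prox}), using nothing beyond the implicit equation. Two minor remarks: like the paper, you need $\sbg{k}\in\partial g(x^k)$, supplied for $k\geq 1$ by the previous prox step (and by an arbitrary choice of subgradient at $x^0$ for $k=0$), which you state correctly; and your closing sentence credits \emph{firm} nonexpansiveness, whereas your displayed argument only uses plain nonexpansiveness --- harmless, but plain nonexpansiveness is all that is needed.
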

\begin{proof}
This time it is just a monotonicity of $\partial g$ in disguise:
        \begin{align*}
                \|\nabla f(x^k) + \sbg{k}\|^2
                &= \|\nabla f(x^k) + \sbg{k+1} + \sbg{k} - \sbg{k+1}\|^2 \\
                &\overset{\eqref{eq:implicit}}{=} \left\|\frac{1}{\ss_k}(x^k-x^{k+1}) + \sbg{k} - \sbg{k+1}\right\|^2 \\
                &= \frac{1}{\ss_k^2}\|x^k-x^{k+1}\|^2 + \frac{2}{\ss_k}\lr{x^k - x^{k+1}, \sbg{k}-\sbg{k+1}} + \|\sbg{k}-\sbg{k+1}\|^2 \\
                &\geq \frac{1}{\ss_k^2}\|x^k-x^{k+1}\|^2 + \frac{2}{\ss_k}\lr{x^k - x^{k+1}, \sbg{k}-\sbg{k+1}}\\
                &\geq \|\nabla f(x^k) + \sbg{k+1}\|^2,
        \end{align*}
        where the last inequality follows from monotonicity of $\partial g$ and \eqref{eq:implicit}.
\end{proof}
In~\Cref{sec:larger} we estimated
$\n{x^{k+1}-x^k}^2=\ss_k^2\n{\nabla f(x^k)}^2$. This time, $\n{x^{k+1}-x^k}^2$
and $\ss_k^2\n{\Sbg{k}}^2$ are  different and it is the latter term that matters
to us.

\begin{lemma}[Compare to \Cref{lem:decr3}]\label{lem:decr3_constr}
  For  iterates $(x^k)$ of \Cref{alg:prox} it holds
   \begin{align*}%\label{eq:decrease_prox}
  \ss_k^2\n{\Sbg{k}}^{2} \leq \frac{\ss_{k-1}^2}{2} \n{\Sbg{k-1}}^{2} + \frac 32 \ss_{k}\th_{k}(F(x^{k-1})-F(x^k)).
\end{align*}
 \end{lemma}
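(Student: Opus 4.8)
The plan is to imitate the proof of \Cref{lem:decr3}, handling two composite-specific features along the way: the quantity that played the role of $\nabla f(x^{k-1})$ in the unconstrained analysis is now $p^k \coloneqq \nabla f(x^{k-1})+\sbg{k}$, which by \eqref{eq:implicit} (taken one index back) equals $\frac{1}{\ss_{k-1}}(x^{k-1}-x^k)$; and the term $\n{x^k-x^{k-1}}^2$ that will inevitably appear has to be converted into $\ss_{k-1}^2\n{\Sbg{k-1}}^2$ rather than merely kept. For the first step, note that $\n{p^k}^2=\frac{1}{\ss_{k-1}^2}\n{x^k-x^{k-1}}^2$ and $\Sbg{k}-p^k=\nabla f(x^k)-\nabla f(x^{k-1})$, so applying the identity $\n{u}^2=\n{u-v}^2-\n{v}^2+2\lr{u,v}$ with $u=\Sbg{k}$ and $v=p^k$, and then multiplying by $\ss_k^2$, gives
\[
\ss_k^2\n{\Sbg{k}}^2 = \ss_k^2 L_k^2\n{x^k-x^{k-1}}^2 - \ss_k^2\n{p^k}^2 + 2\ss_k^2\lr{\Sbg{k},p^k}.
\]

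Next, exactly as in \Cref{lem:decr3}, I would split $\ss_k^2\n{p^k}^2=\frac{\ss_k^2}{2\ss_{k-1}^2}\n{x^k-x^{k-1}}^2+\frac{\ss_k^2}{2}\n{p^k}^2$ and invoke the stepsize rule in the equivalent form \eqref{eq:step_prox}, namely $\ss_k^2 L_k^2-\frac{\ss_k^2}{2\ss_{k-1}^2}\leq\frac12$, to obtain
\[
\ss_k^2\n{\Sbg{k}}^2 \leq \tfrac12\n{x^k-x^{k-1}}^2 - \tfrac{\ss_k^2}{2}\n{p^k}^2 + 2\ss_k^2\lr{\Sbg{k},p^k}.
\]
Now comes the step that has no counterpart in the unconstrained case: since $\n{x^k-x^{k-1}}^2=\ss_{k-1}^2\n{p^k}^2=\ss_{k-1}^2\n{\nabla f(x^{k-1})+\sbg{k}}^2$, \Cref{lem:subgrad_norm} applied with $k$ replaced by $k-1$ yields $\n{\nabla f(x^{k-1})+\sbg{k}}\leq\n{\nabla f(x^{k-1})+\sbg{k-1}}=\n{\Sbg{k-1}}$, hence $\frac12\n{x^k-x^{k-1}}^2\leq\frac{\ss_{k-1}^2}{2}\n{\Sbg{k-1}}^2$.

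For the remaining two terms I would proceed as in the unconstrained proof but with \Cref{prop1_prox} in place of \Cref{prop1}: that lemma gives $\lr{\Sbg{k},p^k}\leq\n{p^k}^2$, so $-\frac{\ss_k^2}{2}\n{p^k}^2+2\ss_k^2\lr{\Sbg{k},p^k}\leq\frac32\ss_k^2\lr{\Sbg{k},p^k}$. Finally, $\ss_k^2\lr{\Sbg{k},p^k}=\ss_k\th_k\lr{\Sbg{k},x^{k-1}-x^k}$, and writing $\Sbg{k}=\nabla f(x^k)+\sbg{k}$ with $\sbg{k}\in\partial g(x^k)$, convexity of $f$ together with the subgradient inequality for $g$ at $x^k$ give $\lr{\Sbg{k},x^{k-1}-x^k}\leq F(x^{k-1})-F(x^k)$. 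Collecting the three estimates yields the stated inequality.

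The genuinely new ingredient — and the one requiring care — is the use of \Cref{lem:subgrad_norm} to push $\n{x^k-x^{k-1}}^2$ back onto $\ss_{k-1}^2\n{\Sbg{k-1}}^2$; in \Cref{lem:decr3} this term was already in the desired form, whereas here the analysis only closes because the subgradient norms are monotone along the trajectory. The other place to be careful is purely bookkeeping: one must keep track that $\sbg{k}$ is the subgradient produced at iteration $k-1$ and lying in $\partial g(x^k)$, so that both \Cref{prop1_prox} and the convexity argument are legitimately applied to it, and that $p^k=\nabla f(x^{k-1})+\sbg{k}$ rather than $\Sbg{k-1}$.
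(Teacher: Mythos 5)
Your proof is correct and takes essentially the same route as the paper's: the same identity $\n{u}^2=\n{u-v}^2-\n{v}^2+2\lr{u,v}$ with $v=\nabla f(x^{k-1})+\sbg{k}$, the same splitting combined with \eqref{eq:step_prox}, \Cref{prop1_prox}, \Cref{lem:subgrad_norm} shifted to index $k-1$, and convexity of $F$ to finish. The only difference is the (immaterial) order in which the bound $\tfrac12\n{x^k-x^{k-1}}^2\leq\tfrac{\ss_{k-1}^2}{2}\n{\Sbg{k-1}}^2$ is invoked.
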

 \begin{proof}
The main idea of the proof is exactly the same as
in~\Cref{lem:decr3}. However, the presence of $\sbg{k}$ make it slightly
more cumbersome. The previous two lemmata are instrumental on our way. We have
\begin{align*}%\label{eq:decrease_prox}
  &\quad \ss_{k}^{2}\|\nabla f(x^{k}) +\sbg{k}\|^{2}
  = \ss_{k}^{2}\n{\nabla f(x^{k})-\nabla f(x^{k-1})}^{2} -\ss_{k}^{2}\n{\nabla f(x^{k-1})+\sbg{k}}^{2} \notag\\
  &\qquad\qquad + 2\ss_{k}^{2}\lr{\nabla f(x^{k})+\sbg{k}, \nabla f(x^{k-1})+\sbg{k}}\notag \\
  &= \ss_k^2 L_k^{2} \n{x^{k}-x^{k-1}}^{2}  -\frac{\ss_{k}^{2}}{2\ss_{k-1}^2}\n{x^k-x^{k-1}}^{2}
    -\frac{\ss_{k}^{2}}{2}\n{\nabla f(x^{k-1})+\sbg{k}}^{2}\notag \\
  &\qquad\qquad + 2\ss_{k}^{2}\lr{\nabla f(x^{k})+\sbg{k}, \nabla f(x^{k-1})+\sbg{k}} \notag\\
  &\overset{\eqref{eq:scalar_prod_to_norm}}{\leq}\ss_k^2\left(L_k^2 -\frac{1}{2\ss_{k-1}^2}\right)
    \n{x^{k}-x^{k-1}}^{2}  + \frac {3\ss_{k}^{2}}{2}\lr{\nabla f(x^{k})+\sbg{k}, \nabla f(x^{k-1})+\sbg{k}}\notag\\
 &\overset{\eqref{eq:step_prox}\&\eqref{eq:implicit}}{\leq} \frac 12  \n{x^{k}-x^{k-1}}^{2}  + \frac {3}{2}\ss_{k}\th_{k}\lr{\nabla f(x^{k})+\sbg{k}, x^{k-1}-x^{k}}  \notag\\
 &\overset{\eqref{eq:implicit}}{=} \frac{\ss_{k-1}^2}{2}  \n{\nabla f(x^{k-1})+\sbg{k}}^2  + \frac {3}{2}\ss_{k}\th_{k}\lr{\Sbg{k}, x^{k-1}-x^{k}} \notag \\
  &\overset{\eqref{eq:subgrad_prox}}{\leq}\frac{\ss_{k-1}^2}{2}  \n{\nabla f(x^{k-1})+\sbg{k-1}}^2 +  \frac {3}{2}\ss_{k}\th_{k}\lr{\Sbg{k}, x^{k-1}-x^{k}}.
\end{align*}
Convexity of $F$ completes the proof.
\end{proof}

\begin{lemma}[Compare to \Cref{lem:unconstr2_energy}]\label{lem:constr_energy}
  For iterates $(x^k)$ of \Cref{alg:prox} and any solution $x^*$ it
  holds
\begin{align}\label{lemma_constr_energy}
  &\n{x^{k+1}-x^*}^2+ \ss_k^2\n{\Sbg{k}}^2 + \ss_k(2+3\th_k)(F(x^{k})-F_{*})\notag   \\
  \leq \, &\n{x^k-x^*}^2  + \ss_{k-1}^2\n{\Sbg{k-1}}^2 + 3\ss_{k}\th_{k}(F(x^{k-1})-F_*).
\end{align}
\end{lemma}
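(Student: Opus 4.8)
\textbf{Proof proposal for \Cref{lem:constr_energy}.}
The plan is to combine the basic composite inequality~\eqref{eq:basic_prox} with \Cref{lem:decr3_constr}, in precisely the way \Cref{lem:unconstr2_energy} was obtained from \Cref{lem:decr3} and~\eqref{eq:adgd_main1}, only with the role of $\n{x^{k+1}-x^k}^2$ now played by $\ss_k^2\n{\Sbg{k}}^2$ and with $f$ replaced by $F$. All the genuine work has already been done: \Cref{lem:decr3_constr} is the composite replacement for \Cref{lem:decr3}, and it rests on \Cref{prop1_prox}, \Cref{lem:subgrad_norm}, and the stepsize rule~\eqref{eq:step_prox}. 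What remains is purely algebraic bookkeeping, so I do not expect a real obstacle in this particular lemma — the difficulty was pushed entirely into \Cref{lem:decr3_constr} (handling the extra subgradient term $\sbg{k}$ cleanly) and into the auxiliary monotonicity fact \Cref{lem:subgrad_norm}.

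First I would put \Cref{lem:decr3_constr} into ``telescopable'' form: multiplying its inequality by $2$ and moving one copy of $\ss_k^2\n{\Sbg{k}}^2$ to the right-hand side gives
\[
  \ss_k^2\n{\Sbg{k}}^2 \;\leq\; \ss_{k-1}^2\n{\Sbg{k-1}}^2 \;-\; \ss_k^2\n{\Sbg{k}}^2 \;+\; 3\ss_k\th_k\bigl(F(x^{k-1})-F(x^k)\bigr),
\]
which is the exact analogue of~\eqref{eq:decrease3_reform}.

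Next I would substitute this bound for $\ss_k^2\n{\Sbg{k}}^2$ into the right-hand side of~\eqref{eq:basic_prox}, obtaining
\[
  \n{x^{k+1}-x^*}^2 + \ss_k^2\n{\Sbg{k}}^2 + 2\ss_k\bigl(F(x^{k})-F_*\bigr) \;\leq\; \n{x^k-x^*}^2 + \ss_{k-1}^2\n{\Sbg{k-1}}^2 + 3\ss_k\th_k\bigl(F(x^{k-1})-F(x^k)\bigr).
\]
Finally I would split $F(x^{k-1})-F(x^k) = \bigl(F(x^{k-1})-F_*\bigr) - \bigl(F(x^k)-F_*\bigr)$ and collect the two $\bigl(F(x^k)-F_*\bigr)$ terms on the left, which combine into $\ss_k(2+3\th_k)\bigl(F(x^k)-F_*\bigr)$; this is exactly~\eqref{lemma_constr_energy}.

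The one point to be careful about is to keep $\ss_k^2\n{\Sbg{k}}^2$ intact rather than lower-bounding it by $\n{x^{k+1}-x^k}^2$: it is the subgradient-augmented quantity that must appear on both sides so that, downstream, the energy $b_k = \ss_{k-1}^2\n{\Sbg{k-1}}^2 + \ss_{k-1}(2+3\th_{k-1})\bigl(F(x^{k-1})-F_*\bigr)$ telescopes (via $3\th_k\ss_k\leq(2+3\th_{k-1})\ss_{k-1}$), mirroring the role of $b_k$ in the proof of \Cref{th:main}.
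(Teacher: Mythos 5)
Your proposal is correct and matches the paper's argument: the paper's proof of \Cref{lem:constr_energy} is literally ``the same as in \Cref{lem:unconstr2_energy},'' i.e.\ rewrite \Cref{lem:decr3_constr} in telescopable form (the analogue of \eqref{eq:decrease3_reform}) and substitute it into \eqref{eq:basic_prox}, exactly as you do. Your closing remark about keeping $\ss_k^2\n{\Sbg{k}}^2$ rather than $\n{x^{k+1}-x^k}^2$ as the energy term is also the right observation, and it is consistent with how the paper uses this lemma downstream.
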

\begin{proof}
 The same as in~\Cref{lem:unconstr2_energy}.
\end{proof}
We define $R$ in the same way as in \eqref{eq:R}
\begin{equation}
    \label{eq:R2}
  R^2 = \n{x^0-x^*}^2  + 2\ss_0^2\n{\Sbg{0}}^2+\ss_{0}(F(x^{0})-F_*).
  \end{equation}
  \begin{lemma}\label{lem2:bounded-2}
  The sequence $(x^k)$ is bounded. In particular, for any solution 
  $x^*$ of \eqref{eq:comp} we have  $x^k\in B(x^*, R)$.
\end{lemma}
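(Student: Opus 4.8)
The proof is the composite analogue of \Cref{lem2:bounded}, so I would run through the same three steps. First, from the first bound in \Cref{alg:prox}, $\ss_k\leq\sqrt{\frac23+\th_{k-1}}\,\ss_{k-1}$, I would square, multiply by $3$, divide by $\ss_{k-1}>0$ and use $\th_k=\ss_k/\ss_{k-1}$ to obtain $3\ss_k\th_k\leq(2+3\th_{k-1})\ss_{k-1}$. Since $x^*$ minimizes $F$, the quantity $F(x^{k-1})-F_*$ is nonnegative, so this inequality lets me replace the trailing term $3\ss_k\th_k(F(x^{k-1})-F_*)$ on the right of \eqref{lemma_constr_energy} by $(2+3\th_{k-1})\ss_{k-1}(F(x^{k-1})-F_*)$. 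Setting $\Phi_k:=\n{x^k-x^*}^2+\ss_{k-1}^2\n{\Sbg{k-1}}^2+(2+3\th_{k-1})\ss_{k-1}(F(x^{k-1})-F_*)$, inequality \eqref{lemma_constr_energy} then reads simply $\Phi_{k+1}\leq\Phi_k$ for every $k\geq 1$.

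Next I would unwind this to $\Phi_{k+1}\leq\Phi_1=\n{x^1-x^*}^2+\ss_0^2\n{\Sbg{0}}^2+(2+3\th_0)\ss_0(F(x^0)-F_*)$ and eliminate the intermediate iterate $x^1$. Applying the basic prox inequality \eqref{eq:basic_prox} at $k=0$ (with $\sbg{0}\in\partial g(x^0)$ the subgradient that enters the definition of $R$ in \eqref{eq:R2}) gives $\n{x^1-x^*}^2\leq\n{x^0-x^*}^2+\ss_0^2\n{\Sbg{0}}^2-2\ss_0(F(x^0)-F_*)$. Plugging this into the bound for $\Phi_1$ and using $\th_0=\frac13$, the function-value terms collapse to a single $\ss_0(F(x^0)-F_*)$ and the two copies of $\ss_0^2\n{\Sbg{0}}^2$ add, leaving exactly $\Phi_{k+1}\leq\n{x^0-x^*}^2+2\ss_0^2\n{\Sbg{0}}^2+\ss_0(F(x^0)-F_*)=R^2$.

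Finally I would note that every summand of $\Phi_{k+1}$ other than $\n{x^{k+1}-x^*}^2$ is nonnegative (again using $F(x^k)\geq F_*$ together with $\ss_k,\th_k>0$), so $\n{x^{k+1}-x^*}^2\leq\Phi_{k+1}\leq R^2$; combined with $\n{x^0-x^*}^2\leq R^2$ and $x^1\in B(x^*,R)$ from the $k=0$ inequality, this yields $x^k\in B(x^*,R)$ for all $k$, hence boundedness. I do not expect a genuine obstacle once \Cref{lem:constr_energy} is available; the two points requiring a little care — and precisely where the composite case departs from the smooth one — are that the sign condition $F(x^k)\geq F_*$ is used both to justify the substitution and to discard terms at the end, and that the boundary contribution must be handled via \eqref{eq:basic_prox} at $k=0$ rather than via $\n{x^1-x^0}^2$, since in the proximal setting $\n{x^{k+1}-x^k}^2\neq\ss_k^2\n{\Sbg{k}}^2$ and it is the latter quantity that figures in the energy \eqref{lemma_constr_energy}.
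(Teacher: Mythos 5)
Your proof is correct and follows essentially the same route as the paper's: the paper likewise telescopes \eqref{lemma_constr_energy} down to $k=1$ using $3\ss_k\th_k\leq(2+3\th_{k-1})\ss_{k-1}$ (as in \Cref{lem2:bounded}) and then invokes \eqref{eq:basic_prox} at $k=0$ to eliminate $\n{x^1-x^*}^2$, with $\th_0=\frac13$ collapsing the constants to $R^2$. Your observation that the energy term is $\ss_k^2\n{\Sbg{k}}^2$ rather than $\n{x^{k+1}-x^k}^2$ is exactly the point where the composite case differs, and you handle it as the paper does.
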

\begin{proof}
  The same as in \Cref{lem2:bounded}. We use \eqref{lemma_constr_energy} to telescope until $k=1$
  and then apply \eqref{eq:basic_prox} with $k=0$ to bound $\n{x^1 - x^*}^2$.
\end{proof}
\begin{theorem}\label{th:main-2}
  Let $f$ be convex with a locally Lipschitz gradient $\nabla f$ and $g$ be
  convex lsc. Then the sequence $(x^k)$ generated by \Cref{alg:prox} converges
  to a solution of~\eqref{eq:comp} and
\begin{equation}\label{steps_denom-2}
  \min_{i\in [k]} \bigl(F(x^i)-F_*\bigr) \leq    \frac{R^2}{2\sum_{i=1}^k\ss_i}.
\end{equation}
In particular, if $\ss_0$ satisfies \eqref{eq:ss_0}, then
\begin{equation}\label{bounds_denom-2}
  \min_{i\in [k]} \bigl(F(x^i)-F_*\bigr) \leq  \frac{LR^2}{\sqrt 2 k},
\end{equation}
where $L$ is the Lipschitz constant of $\nabla f$ over $B(x^*, R)$.
\end{theorem}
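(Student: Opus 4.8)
The plan is to transcribe the proofs of \Cref{lem2:bounded} and \Cref{th:main}, replacing their unconstrained ingredients with \Cref{lem:decr3_constr}--\Cref{lem:constr_energy}. First I would sum the energy inequality \eqref{lemma_constr_energy} over $i=1,\dots,k$, using the first stepsize bound of \Cref{alg:prox} in the form $3\ss_{i+1}\th_{i+1}\leq(2+3\th_i)\ss_i$ so that the coefficients in front of the $(F(x^i)-F_*)$ telescope, exactly as in \Cref{lem2:bounded-2}; telescoping only down to $i=1$ and then bounding the residual $\n{x^1-x^*}^2$ via \eqref{eq:basic_prox} at $k=0$ (together with $3\ss_1\th_1\leq 3\ss_0$, valid since $\th_0=\frac{1}{3}$) reproduces the constant $R^2$ of \eqref{eq:R2}. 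This yields
\begin{align*}
\n{x^{k+1}-x^*}^2&+\ss_k^2\n{\Sbg{k}}^2+\ss_k(2+3\th_k)(F(x^k)-F_*)\\
&+\sum_{i=1}^{k-1}\bigl(\ss_i(2+3\th_i)-3\ss_{i+1}\th_{i+1}\bigr)(F(x^i)-F_*)\leq R^2,
\end{align*}
where every coefficient is nonnegative (again by the first stepsize bound) and the coefficients sum to $2\sum_{i=1}^k\ss_i+3\th_1\ss_1\geq 2\sum_{i=1}^k\ss_i$; discarding the first two nonnegative terms gives \eqref{steps_denom-2}.

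For the refined rate \eqref{bounds_denom-2} I would note that \Cref{alg:prox} and \Cref{alg:unconstr-2} use the \emph{same} stepsize recursion, and that $L_k\leq L$ for all $k$ because the iterates lie in $B(x^*,R)$ (\Cref{lem2:bounded-2}). Since \Cref{lem:improved_sum} and the auxiliary statements it relies on (\Cref{lem:small_bound}, \Cref{rem:boundedness}, and the lemmas preceding them) depend only on the stepsize recursion and on the bound $L_k\leq L$, they carry over verbatim to \Cref{alg:prox}; in particular $\sum_{i=1}^k\ss_i\geq k/(\sqrt2 L)$ when $\ss_0$ satisfies \eqref{eq:ss_0}, and plugging this into \eqref{steps_denom-2} gives \eqref{bounds_denom-2}. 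The same transfer shows that $\ss_k$ is bounded away from $0$ regardless of \eqref{eq:ss_0}, a fact needed below.

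It remains to prove $x^k\to$ a solution, following the convergence part of \Cref{th:main}. Since $f$ is $L$-smooth on $B(x^*,R)$, \eqref{eq:Lsmooth} with $(x,y)=(x^k,x^*)$ gives $\lr{\nabla f(x^k),x^*-x^k}\leq f(x^*)-f(x^k)-\frac{1}{2L}\n{\nabla f(x^k)-\nabla f(x^*)}^2$; inserting this sharper bound in place of plain convexity of $f$ in the derivation of \Cref{lem:constr_energy} adds the term $\frac{\ss_k}{L}\n{\nabla f(x^k)-\nabla f(x^*)}^2$ on the left of \eqref{lemma_constr_energy}. Telescoping the strengthened inequality and using $\ss_k\geq\mathrm{const}>0$ forces $\nabla f(x^k)\to\nabla f(x^*)$; combining this with the contraction $\ss_k^2\n{\Sbg{k}}^2\leq\frac{\ss_{k-1}^2}{2}\n{\Sbg{k-1}}^2+\dots$ of \Cref{lem:decr3_constr} and the bound $\n{x^{k+1}-x^k}\leq\ss_k\n{\Sbg{k}}$ of \Cref{lem:subgrad_norm} (exactly as the passage ``$\sum_i\frac{\ss_i}{L}\n{\nabla f(x^i)}^2\leq R^2$'' is used in \Cref{th:main}) one also gets $x^{k+1}-x^k\to0$. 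Passing to the limit along any convergent subsequence $x^{k_j}\to\bar x$ in the prox-optimality relation $\frac{1}{\ss_k}(x^k-x^{k+1})-\nabla f(x^k)\in\partial g(x^{k+1})$ --- so that $x^{k_j+1}\to\bar x$ and $\frac{1}{\ss_{k_j}}(x^{k_j}-x^{k_j+1})\to0$ --- and using the closedness of the graph of $\partial g$ yields $-\nabla f(\bar x)\in\partial g(\bar x)$, i.e.\ $\bar x$ solves \eqref{eq:comp}. Finally, \eqref{lemma_constr_energy} together with $3\ss_k\th_k\leq(2+3\th_{k-1})\ss_{k-1}$ gives the Fejér-type recursion $\n{x^{k+1}-x^*}^2+b_{k+1}\leq\n{x^k-x^*}^2+b_k$ with $b_k=\ss_{k-1}^2\n{\Sbg{k-1}}^2+(2+3\th_{k-1})\ss_{k-1}(F(x^{k-1})-F_*)\geq0$ independent of $x^*$, so $\n{x^k-x^*}^2+b_k$ converges for every solution $x^*$; since all limit points of $(x^k)$ are solutions, the standard Opial-type argument (any two limit points must coincide) shows $(x^k)$ converges to a solution.

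The main obstacle is \emph{not} in this theorem. Once \Cref{prop1_prox}, \Cref{lem:subgrad_norm} and \Cref{lem:decr3_constr} are available --- their genuinely new content being the subgradient-norm inequality \eqref{eq:subgrad_prox}, which is what lets one turn the $g(x^{k+1})-g(x^*)$ term produced by the prox step into a quantity compatible with the $f(x^k)-f(x^*)$ term driving the adaptive stepsize --- the argument is a line-by-line copy of \Cref{lem2:bounded} and \Cref{th:main}. The one point needing fresh care is the convergence step: unlike the smooth case one cannot conclude $\nabla f(x^k)\to0$ (here $\nabla f(x^*)$ need not vanish), so the identification of limit points with solutions goes through the prox-optimality condition and closedness of $\partial g$ rather than through continuity of $\nabla f$ alone.
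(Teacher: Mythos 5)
Your handling of the rate bounds \eqref{steps_denom-2}--\eqref{bounds_denom-2} (telescoping \eqref{lemma_constr_energy} with $3\alpha_{i+1}\theta_{i+1}\leq(2+3\theta_i)\alpha_i$, closing the constant with \eqref{eq:basic_prox} at $k=0$, and transferring \Cref{lem:improved_sum} since it only uses the stepsize recursion and $L_k\leq L$) coincides with the paper's argument and is fine. The gap is in the iterate-convergence part, at the assertion that \Cref{lem:decr3_constr} together with $\n{x^{k+1}-x^k}\leq \alpha_k\n{\Sbg{k}}$ ``also gives $x^{k+1}-x^k\to 0$.'' This does not follow from the facts you cite: telescoping \eqref{lemma_constr_energy} only shows the energy is nonincreasing, hence $\alpha_k^2\n{\Sbg{k}}^2\leq R^2$ is \emph{bounded}, not vanishing; and the perturbation $\tfrac32\alpha_k\theta_k\bigl(F(x^{k-1})-F(x^k)\bigr)$ in \Cref{lem:decr3_constr} is not known to be summable or even to tend to zero, because the telescoped coefficients $\alpha_i(2+3\theta_i)-3\alpha_{i+1}\theta_{i+1}$ vanish exactly when the first stepsize bound is active, so one cannot extract $F(x^k)\to F_*$ from the energy estimate. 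This is precisely the subtlety the paper flags: $\alpha_k$ may tend to $+\infty$, so even the statement the paper does prove, $\frac{1}{\alpha_k}\n{x^{k+1}-x^k}^2\to 0$, does not imply $x^{k+1}-x^k\to 0$. Since both limits you feed into the closedness-of-$\partial g$ argument ($x^{k_j+1}\to\bar x$ and $\frac{1}{\alpha_{k_j}}(x^{k_j}-x^{k_j+1})\to 0$) are deduced from this unproven claim, the identification of limit points with solutions is not established.

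The paper closes exactly this hole by an extra argument you skip: setting $x=x^k$ in the prox-inequality \eqref{eq:0we9}, isolating $\delta_k=\lr{\nabla f(x^*),x^k-x^{k+1}}+g(x^k)-g(x^{k+1})$, and combining Young's inequality with the summability \eqref{eq:sd1} of $\alpha_k\n{\nabla f(x^k)-\nabla f(x^*)}^2$ and the telescoping boundedness of $\sum_k\delta_k$ to obtain $\frac{1}{\alpha_k}\n{x^{k+1}-x^k}^2\to 0$, hence $\frac{1}{\alpha_k}\n{x^{k+1}-x^k}\to 0$ because $(\alpha_k)$ is bounded away from zero; limit points are then identified through the value inequality \eqref{eq:2ftsd} and lower semicontinuity of $F$, which never requires $x^{k+1}-x^k\to 0$. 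Your graph-closedness route is viable in principle and could even avoid $x^{k+1}-x^k\to 0$ altogether (take the subsequence along the indices $k_j+1$, so $x^{k_j+1}\to\bar x$ by choice, and use that $\nabla f(x^k)\to\nabla f(x^*)$ forces $\nabla f(\bar x)=\nabla f(x^*)$ at every limit point), but it still needs the estimate $\frac{1}{\alpha_k}\n{x^{k}-x^{k+1}}\to 0$, so the $x=x^k$/$\delta_k$ argument cannot be bypassed. The remaining pieces (the Fejér-type recursion and the Opial ending) are as in the paper.
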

\begin{proof}
  The proof of inequalities~\eqref{steps_denom-2} and~\eqref{bounds_denom-2} is
  almost identical to the one in \Cref{th:main}. The proof of convergence of
  $(x^k)$ to a solution is, however, more nuanced. The nontrivial part is to
  show that all limit points of $(x^k)$ are solutions. While on the surface, it
  should be no harder than before, the fact that $\lim_{k\to +\infty}\ss_k$ can
  be $+\infty$ complicates things a bit.
  
  Let $x^*$ be a solution of \eqref{eq:comp}. By $L$-smoothness of $f$ over
  $B(x^*, R)$, we have
  \begin{align*}
  f(x^*) - f(x^k)& \geq \lr{\nabla f(x^k), x^*-x^k} + \frac{1}{2L}\n{\nabla f(x^k)-\nabla f(x^*)}^2.  
  \end{align*}
Using this improved bound, similarly as it was done in \eqref{unconstr2_energy2}, we get
\begin{align}\label{constr2_energy2}
  &\n{x^{k+1}-x^*}^2+ \ss_k^2\n{\Sbg{k}}^2 + \ss_k(2+3\th_k)(F(x^{k})-F_{*}) +\frac{\ss_k}{L}\n{\nabla f(x^k)-\nabla f(x^*)}^2 \notag   \\
  \leq \, &\n{x^k-x^*}^2  + \ss_{k-1}^2\n{\Sbg{k-1}}^2 + 3\ss_{k}\th_{k}(F(x^{k-1})-F_*).
\end{align}
By telescoping this inequality as before, we can now additionally infer that
\begin{equation}
  \label{eq:sd1}
\sum_{k=1}^\infty \ss_k \n{\nabla f(x^k)-\nabla f(x^*)}^2<+\infty  
\end{equation}
and thus, $\n{\nabla f(x^k) - \nabla f(x^*)}\to 0$. Specifically, this implies $\nabla f(x^k) - \nabla f(x^{k-1})\to 0$ as $k\to \infty$.

We want to prove that all limit points of $(x^k)$ are solutions. To this end, 
we will use prox-inequality \eqref{eq:1234d} rewritten as 
\begin{align}\label{eq:0we9}
  \frac{1}{\ss_k}\lr{x^{k+1}-x^k,  x-x^{k+1}} + \lr{\nabla f(x^{k}), x-x^{k+1}}
  \geq g(x^{k+1})-g(x), \forall x
\end{align}
which in turn, by convexity of $f$, leads to
\begin{align}
  \label{eq:2ftsd}
    \frac{1}{\ss_k}\lr{x^{k+1}-x^k , x-x^{k+1}} + \lr{\nabla f(x^k)-\nabla f(x^{k+1}),x-x^{k+1}} \geq F(x^{k+1}) - F(x).
\end{align}
The left-hand side has two terms, and the second term evidently tends to $0$ as
$\nabla f(x^{k+1})-\nabla f(x^k)\to 0$.  If we can show the same for the first term, it will imply that all limit points of $(x^k)$ are solutions.

Consider~\eqref{eq:0we9} again, but this time we set $x=x^k$. This yields
\begin{align*}%\label{eq:asd90}
  -\frac{1}{\ss_k}\n{x^{k+1}-x^k}^2 + \lr{\nabla f(x^{k}), x^k-x^{k+1}}
  \geq g(x^{k+1})-g(x^k).
\end{align*}
We manipulate the  inequality above as follows
\begin{align*}%\label{eq:asd90}
  \frac{1}{\ss_k}\n{x^{k+1}-x^k}^2
  &\leq  \lr{\nabla f(x^{k}), x^k-x^{k+1}}+ g(x^{k})-g(x^{k+1}) \\
  &= \lr{\nabla f(x^{k})-\nabla f(x^*), x^k-x^{k+1}}+ \underbrace{\lr{\nabla
    f(x^*),x^k-x^{k+1}} +  g(x^{k})-g(x^{k+1})}_{\delta_k}\\
  &\leq \frac{\ss_k}{2}\n{\nabla f(x^{k})-\nabla f(x^*)}^2 +
    \frac{1}{2\ss_k}\n{x^{k+1}-x^k}^2 + \delta_k,
\end{align*}
where in the last inequality we applied Cauchy-Schwarz and Young's inequalities.
From this we deduce that
\[\frac{1}{\ss_k}\n{x^{k+1}-x^k}^2 \leq \ss_k \n{\nabla f(x^{k})-\nabla
    f(x^*)}^2 + 2\delta_k.\]
Note that the sequence $\lrb{\ss_k \n{\nabla f(x^{k})-\nabla
    f(x^*)}^2}$ is summable by \eqref{eq:sd1}.  Also, the sequence $(\delta_k)$ is
summable, since $(x^k)$ is bounded and $g(x^k)$ is lower-bounded: $g(x^k)\geq
F_* - f(x^k)>-\infty$ for all $k$. Hence,
$\sum_{k}\frac{1}{\ss_k}\n{x^{k+1}-x^k}^2<+\infty$ and, thus,
$\frac{1}{\ss_k}\n{x^{k+1}-x^k}^2\to 0$ as $k\to +\infty$. Given that $(\a_k)$ is separated from zero, it immediately follows that $\frac{1}{\ss_k}\n{x^{k+1}-x^k}\to 0$ as well.

Therefore, we have proved that all limit points of $(x^k)$ are solutions.  The
proof of convergence of the whole sequence $(x^k)$ runs as before in
\Cref{th:main}. 
\end{proof}
\begin{remark}
We didn't derive a linear convergence of the adaptive proximal gradient,
  when $F$ is strongly convex. We only mention that it is quite straightforward
  and goes along the same lines as the original AdGD
  in~\cite[Theorem 2]{malitsky2019_descent} in the strongly convex regime.
\end{remark}

\section{Literature and discussion}\label{sec:literature}

\paragraph{Linesearch.} There are many variants of linesearch procedures that go
back to celebrated works of Goldstein~\cite{goldstein1962cauchy} and
Armijo~\cite{armijo1966}. We discuss an efficient implementation of the latter
in detail in the next section. For other variants of linesearch, we refer to
\cite{bello2016convergence,salzo2017variable}.

\paragraph{Adagrad-type methods.} Original Adagrad algorithm was proposed
simultaneously in~\cite{duchi2011adaptive} and \cite{mcmahan2010adaptive}. The method
has had a stunning impact on machine learning applications. It has also spawned a stream of various extensions that retain the same idea of
using eventually decreasing steps. Because of this, its adaptivity is more prominent in the
non-smooth regime, where stepsizes must be diminishing to guarantee convergence. Recent works~\cite{defazaio2023learning,ivgi2023dog} have proposed ways to increase $d_k$ in the update~\eqref{eq:adagrad} and \cite{khaled2023dowg} even proved convergence of such method on smooth objectives. However, the stepsize in these method still eventually stops increasing as $d_k$ is upper bounded by a constant.

In addition,
Adagrad-type methods are usually sensitive to the initialization, as they either degrade in performance when $d_k=D$ and $D$ is not chosen carefully, or their convergence rate depends multiplicatively on $\log(\|x^0 - x^*\|/d_0)$. In contrast, in our methods, the cost of estimating $\alpha_0$ to satisfy condition~\eqref{eq:ss_0} is additive and its impact vanishes as the total number of iterations increases.

\paragraph{Refined results on GD with a fixed stepsize.}
Paper~\cite{teboulle2022elementary} summarizes quite well the difficulty of GD
analysis with large steps. In it, the authors derive sharp convergence bounds
separately for two cases
$\ss L\in (0,1]$ and $\ss L\in (1,2)$, and the latter case is considerably
harder. In our analysis it is even harder, since the steps can go far beyond the global
upper bound $\frac{2}{L}$. A  surprising recent
result~\cite{grimmer2023provably} showcases how actually little is understood in
this case.

\paragraph{Small gradient.}
The lack-of-descent property makes it hard to deduce the $\cO(1/k)$ rate for
the last-iterate $\n{\nabla f(x^k)}$, which is known for GD with a fixed stepsize. We leave it as
an open problem to establish a rate.

\paragraph{Extensions.}
Because the analysis of the algorithm is so special, it is not easy to extend it to basic
generalizations of GD. However, some works have already build upon it. 
In~\cite{vladarean2021}, the authors consider a convex smooth minimization
subject to linear constraints and combined the adaptive GD \cite{malitsky2019_descent}
with  the Chambolle-Pock algorithm~\cite{chambolle2011first}. The authors of~\cite{latafat2023adaptive}
went even further and considered a more general composite minimization problem
subject to linear constraints, where the same two ideas as before were combined
with a novel way of handling the prox mapping.

If we consider variational inequalities settings in the monotone case, then it
is not clear how such adaptivity can help, since the most natural extension, the
\emph{forward-backward} method will diverge. On the other hand, an adaptive
golden ratio algorithm~\cite{Malitsky2019}, which is the precursor of a given
work, already has all the properties that AdProxGD has.

\section{Experiments}\label{sec:exp}
\newcommand{\mylegend}{\begin{center}
   \small{
     \block{0}{0}{0} AdProxGD\quad
     \block{0.0}{159.0}{129.0} (1.2, 0.5)\quad
     \block{255.0}{90.0}{175.0} (1.5, 0.8)\quad
     \block{0.0}{252.0}{207.0} (1.1, 0.5)\quad
     \block{132.0}{0.0}{205.0} (1.2, 0.9)\\
     \block{0.0}{141.0}{249.0} (1.1, 0.9)\quad
     \block{0.0}{194.0}{249.0}  (1.5, 0.5)\quad
     \block{255.0}{178.0}{253.0} (1.2, 0.8)\quad
     \block{164.0}{1.0}{34.0} (1.1, 0.8)\quad
     \block{226.0}{1.0}{52.0} (1.5, 0.9)
   }
 \end{center}
}
In the experiments\footnote{\href{https://github.com/ymalitsky/AdProxGD}{https://github.com/ymalitsky/AdProxGD}} we compare our method to the ProxGD with Armijo's linesearch.
We believe it is the best and arguably the most popular alternative to our method. 
An efficient implementation of Armijo's linesearch requires two parameters,
$s>1$ and $r<1$. In the $k$-th iteration, the first iteration of linesearch
starts from $\ss_k = s \ss_{k-1}$, that is, we want to try a slightly larger step
than in the previous iteration. If linesearch does not terminate, we start
decreasing a stepsize geometrically with  a
ratio $r$. Formally, we are looking for the largest $\ss_k =sr^i \ss_{k-1}$, for $i
=0,1,\dots$, such that for $x^{k+1}=\prox_{\ss_k g}(x^k - \ss_k \nabla f(x^k))$ it
holds that 
\[f(x^{k+1}) \leq f(x^k) + \lr{\nabla f(x^k), x^{k+1}-x^k} + \frac{1}{2\ss_k} \n{x^{k+1}-x^k}^2.\]
It is evident that each iteration of this linesearch requires one evaluation of
$f$ and $\prox_g$.
However, it is important to highlight that in some cases, the last evaluation  of
$f(x^{k+1})$ (during
linesearch) may not incur any additional costs, as certain expensive operations, such
as matrix-vector multiplication, can be reused to compute the next gradient $\nabla f(x^{k+1})$. 
Throughout our comparisons, we consistently took these factors into account and
reported only essential operations that cannot be further reused.

Our legend will stay the same for all plots:
\mylegend
where each pair of numbers represent $(s,r)$ for ProxGD with linesearch
described above. As we will see, the choice of $(s,r)$ matters a lot.

\paragraph{Maximum likelihood estimate of the information matrix.}
We consider \cite[Equation (7.5)]{boyd2004convex}, where our goal is to estimate
the inverse of a covariance matrix $Y$ subject to eigenvalue bounds. Formally, this problem can be formulated as follows
\begin{equation}
  \label{eq:mle}
  \min_{X\in \mathbbm{S}^{n}} f(X) = \log\det X - \tr(XY)\quad \text{subject to}\quad lI\preccurlyeq X \preccurlyeq u I,
  \end{equation}
  where $\mathbbm{S}^n$ denotes the space of $n$-by-$n$ symmetric matrices and
  $A\preccurlyeq B$ means that $B-A$ is positive semidefinite.

  Computing
  projection onto the constraint set
  $\cC = \{X\colon lI\preccurlyeq X \preccurlyeq u I\}$ requires computing matrix
  eigendecomposition. However, it is noteworthy that once the eigendecomposition
  is computed, both the objective and gradient evaluations can be carried out at
  a low cost. Consequently, when comparing methods, we only emphasized the
  number of projections conducted. We generated a random $y\in \R^{n}$ with entries from
  $N(0,10)$ and $\d_i\in \R^n$ with entries from $N(0,1)$, and then set $y_i = y+\delta_i$,
  for $i=1,\dots, M$. Then we computed
  $Y = \frac{1}{M}\sum_{i=1}^{M} y_iy_i^\top$. The results are presented in~\Cref{fig:mle}.

\begin{figure}[ht]
  \mylegend
  \centering
  \begin{subfigure}[b]{0.45\linewidth}
    \centering
    \includegraphics[width=\linewidth]{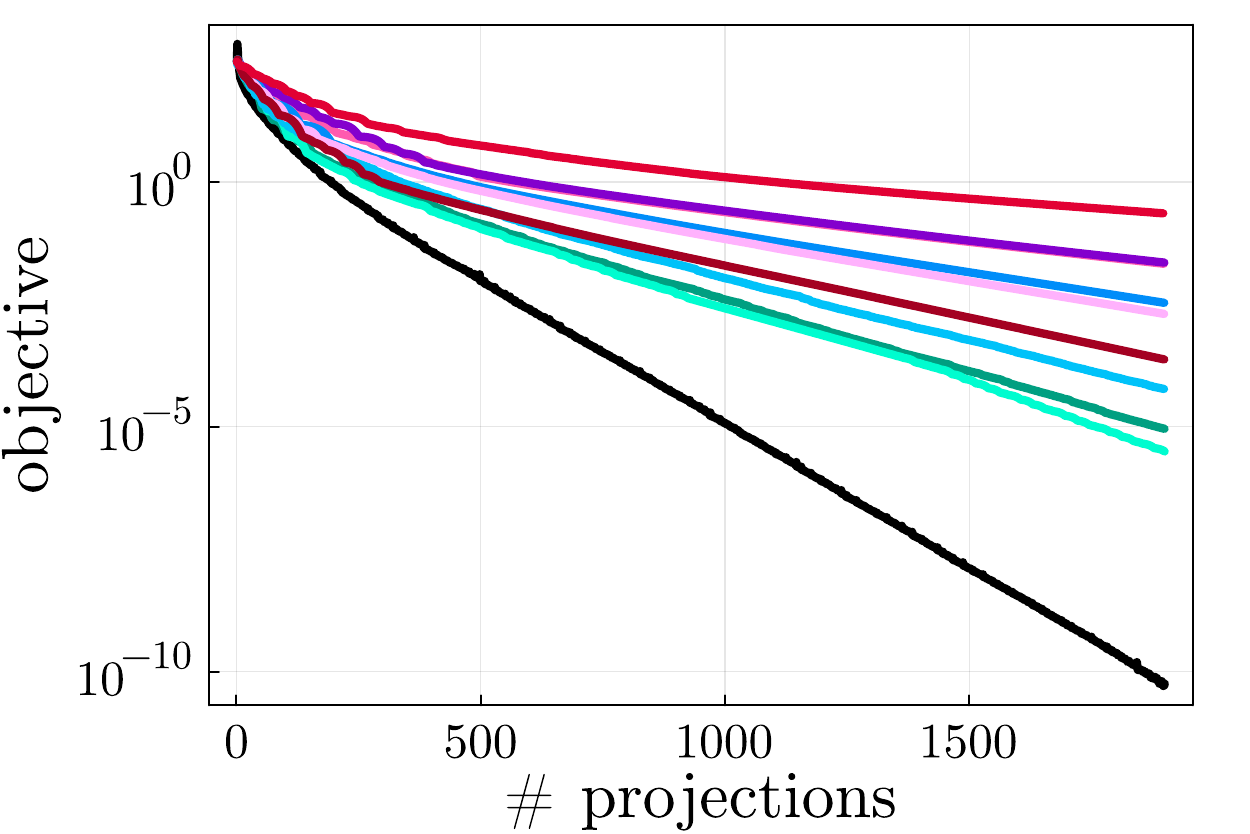}
    \caption{$n=100$, $l=0.1$, $u=10$, $M=50$}\label{fig:mle-1}
  \end{subfigure}
  \begin{subfigure}[b]{0.45\linewidth}
    \centering
    \includegraphics[width=\linewidth]{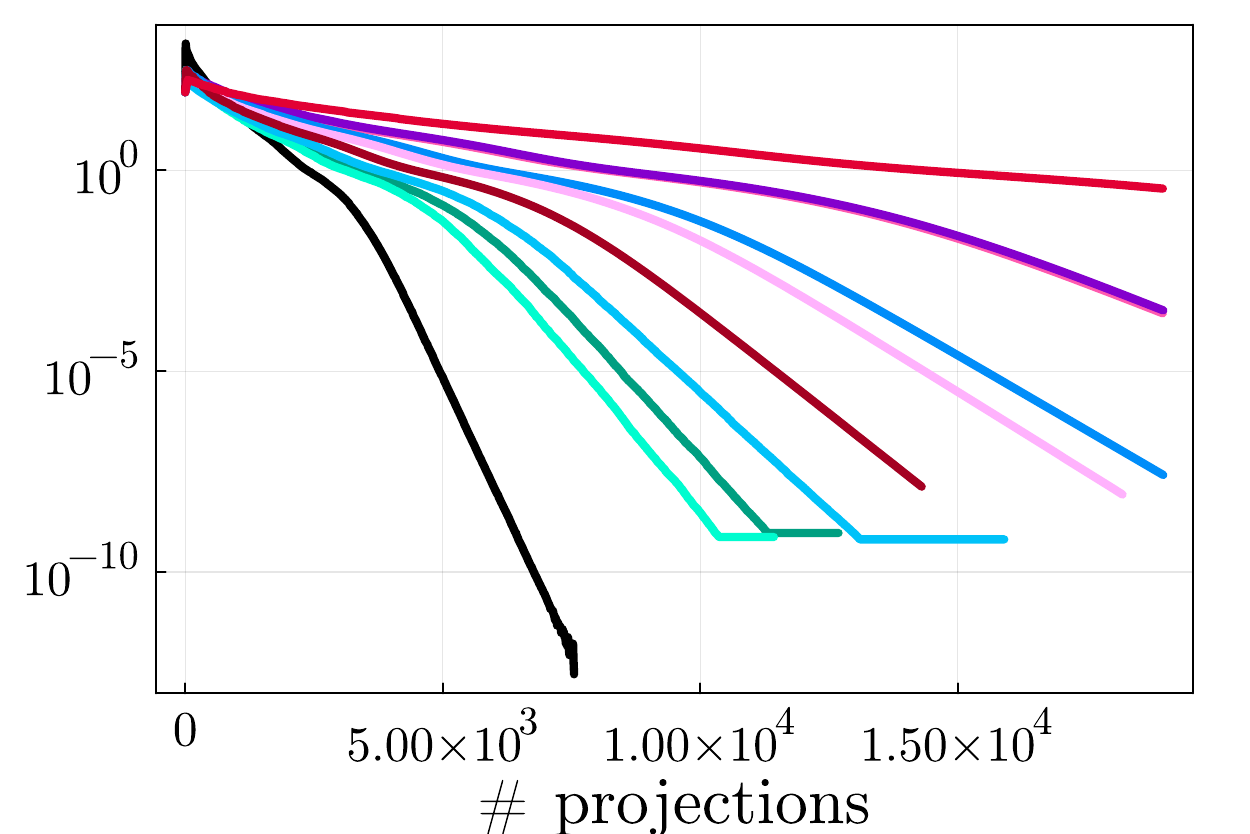}
    \caption{$n=50$, $l=0.1$, $u=1000$, $M=100$}\label{fig:mle-2}
  \end{subfigure}   
  \caption{{Maximum likelihood estimate, problem~\eqref{eq:mle}}}
  \label{fig:mle}
\end{figure}

\paragraph{Low-rank matrix completion}
We consider a famous low-rank matrix completion problem in the form 
\begin{equation}
  \label{eq:lrmc}
\min_{X\in \R^{n\times n}}\frac 12 \n{P_\Omega(X - A)}^2_{F} \quad \text{subject to}\quad
  \n{X}_{*}\leq r,
\end{equation}
where $\Omega$ is a subset of indices $(i,j)$ and $r$ is the supposed maximum rank. 
To project onto the spectral ball $\cC = \{X\colon \n{X}_{*}\leq r\}$, computing the singular value decomposition (SVD) is required, making it the most computationally expensive operation in this setting.

We created matrix $A$ by multiplying matrices $U$ and $V^\top$, where $U$ and $V$ are $n$-by-$r$ matrices with entries sampled from a normal distribution. 
The subset $\Omega$ was randomly chosen as a fraction of $\frac{1}{5}n^2$ entries from $[n]\times [n]$. The obtained results are depicted in \Cref{fig:lrmc}, where we solely compared the number of computed SVDs.

\begin{figure}[ht]
  \mylegend
  \centering
  \begin{subfigure}[b]{0.45\linewidth}
    \centering
    \includegraphics[width=\linewidth]{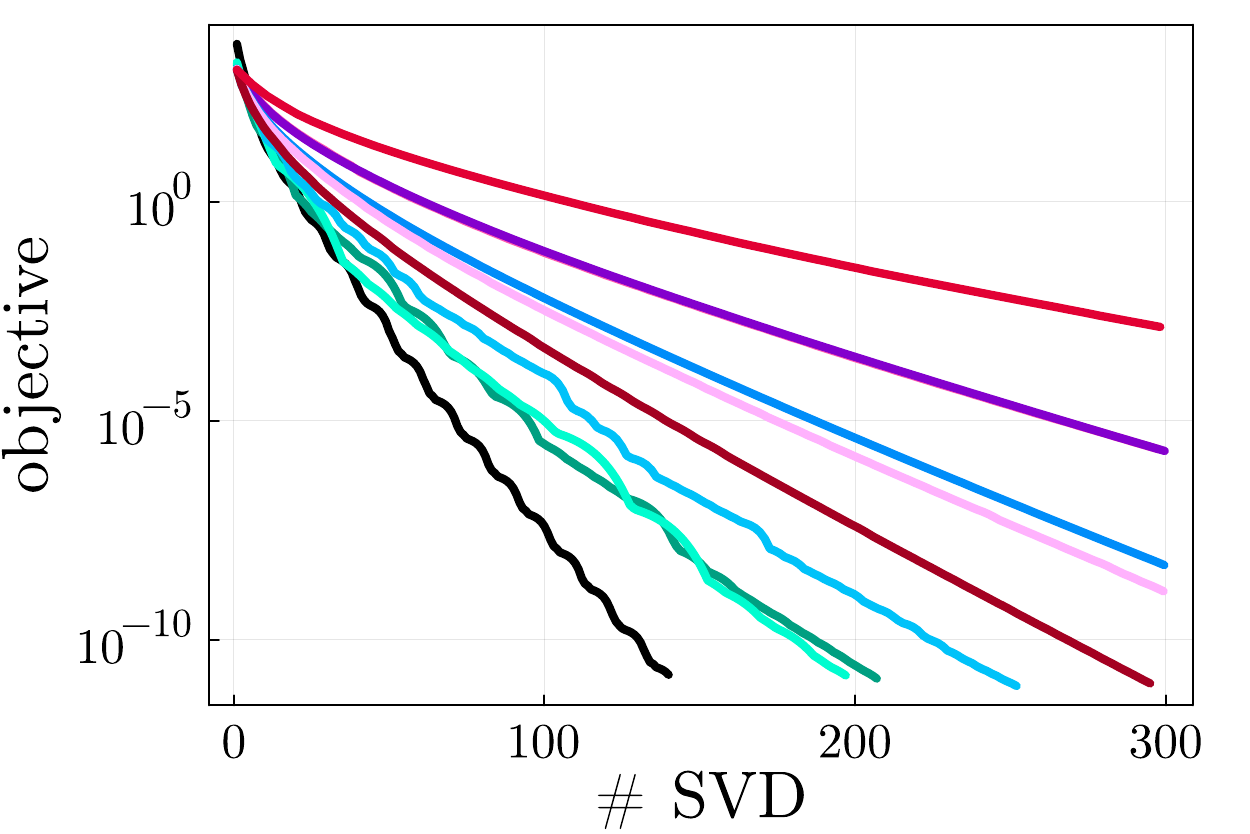}
    \caption{$n=100$, $r=20$}\label{fig:lrmc-1}
  \end{subfigure}
  \begin{subfigure}[b]{0.45\linewidth}
    \centering
    \includegraphics[width=\linewidth]{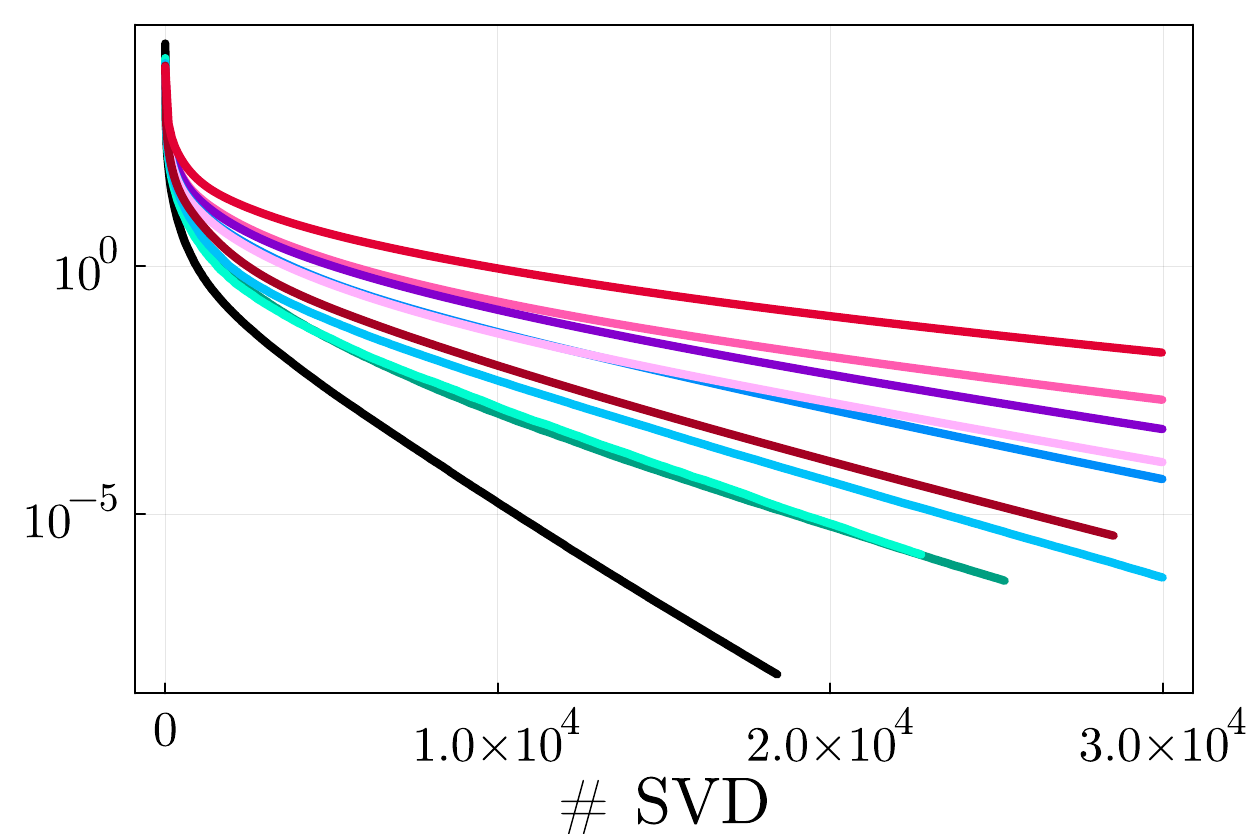}
    \caption{$n=200$, $r=20$}\label{fig:lrmc-2}
  \end{subfigure}
   
  \caption{{Low-rank matrix completion, problem~\eqref{eq:lrmc}}}
  \label{fig:lrmc}
\end{figure}

\paragraph{Minimal length piecewise-linear curve subject to linear constraints.}
We consider \cite[Example 10.4]{boyd2004convex}, where we want to minimize the length of a
piecewise-linear curve passing through $n$ points in $\mathbb{R}^2$ with
coordinates $(1,x_1), \dots, (n,x_n)$ while satisfying linear constraints
$Ax = b$, where $x=(x_1,\dotsc, x_n)$. Given $A\in \R^{m\times n}$ and $b\in \R^m$, this can be modeled as 
\begin{equation}
  \label{eq:mlc}
 \min_{x\in \R^n} (1+x_1)^{\nicefrac 12} +
  \sum_{i=1}^{n-1}(1+(x_{i+1}-x_i)^2)^{\nicefrac 1 2}\quad \text{subject to}\quad Ax =
  b.  
\end{equation}
While applying the proximal gradient method,
the most computationally expensive operation is computing the projection onto
$\cC= \{x\colon Ax=b\}$.  Assuming that $A$ is full rank with $m\leq n$, this
projection can be computed as $P_\cC z = z - A^\top (AA^\top)^{-1}(Az - b)$.

In comparison, we focused solely on the number of computed projections. We
generated a random $m$-by-$n$ matrix $A$ and random vector $w$ with entries sampled from a normal distribution and set $b = Aw$.

\begin{figure}[ht]
  \mylegend
  \centering
  \begin{subfigure}[b]{0.45\linewidth}
    \centering
    \includegraphics[width=\linewidth]{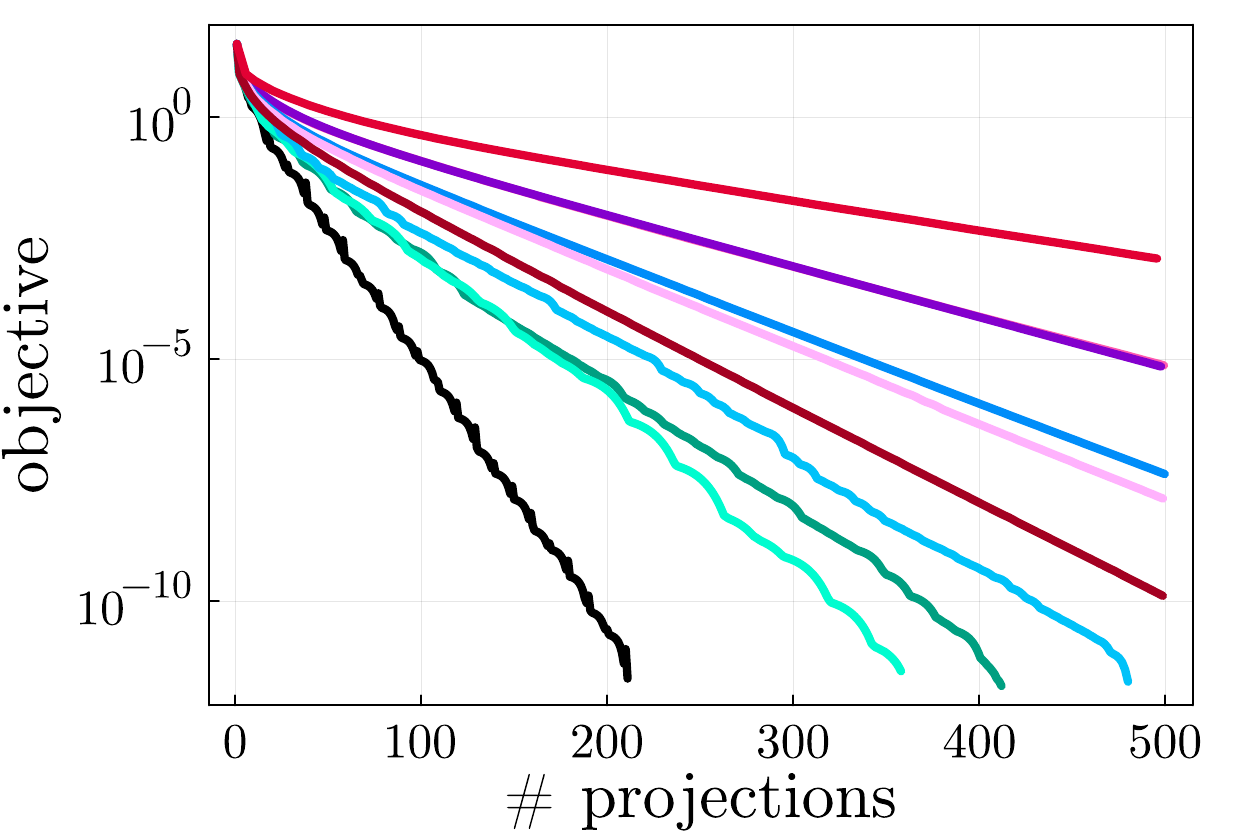}
    \caption{$m=50$, $n=200$}\label{fig:mlc-1}
  \end{subfigure}
  \begin{subfigure}[b]{0.45\linewidth}
    \centering
    \includegraphics[width=\linewidth]{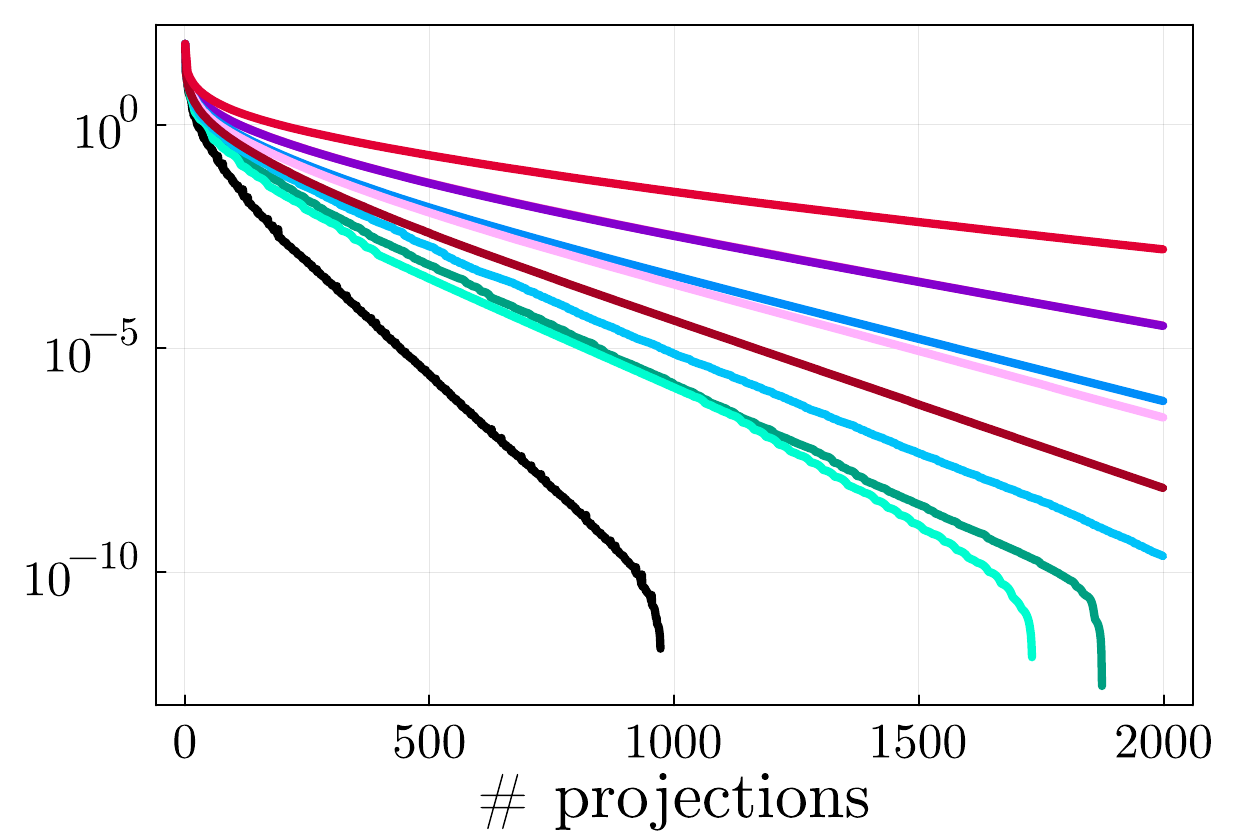}
    \caption{$m=50$, $n=500$}\label{fig:mlc-2}
  \end{subfigure}
   
  \caption{{Minimal length piecewise-linear curve, problem~\eqref{eq:mlc}}}
  \label{fig:mlc}
\end{figure}

\paragraph{Nonnegative matrix factorization.}
We want to solve the matrix factorization problem subject to nonnegative constraints:
\begin{equation}\label{eq:nmf}
  \min_{U,V\in \R^{n\times r}_{+}} f(U,V) = \frac 12 \n{UV^\top - A}^2_{F},
\end{equation}
where $A$ is a given $n$-by-$n$ low-rank matrix. Although nonconvex, this
problem is famously well-tackled by first-order methods. In each iteration, the
gradient $\nabla f(x)$ involves 3 matrix-matrix multiplications, whereas
evaluating the objective $f(x)$ only requires 1. Note that for the last
iteration of the linesearch, the computed matrix product can be reused to
compute the gradient for the next iteration.

We created matrix $A$ by
multiplying matrices $B$ and $C^\top$, where $B$ and $C$ are $n$-by-$r$ matrices
with entries sampled from a normal distribution. Negative entries in both
matrices $B$ and $C$ were then set to zero. The results are presented in
\Cref{fig:nmf}, where the number of gradients  roughly means the number of 3 matrix-matrix multiplications.

\begin{figure}[ht]
  \mylegend
  \centering
  \begin{subfigure}[b]{0.45\linewidth}
    \centering
    \includegraphics[width=\linewidth]{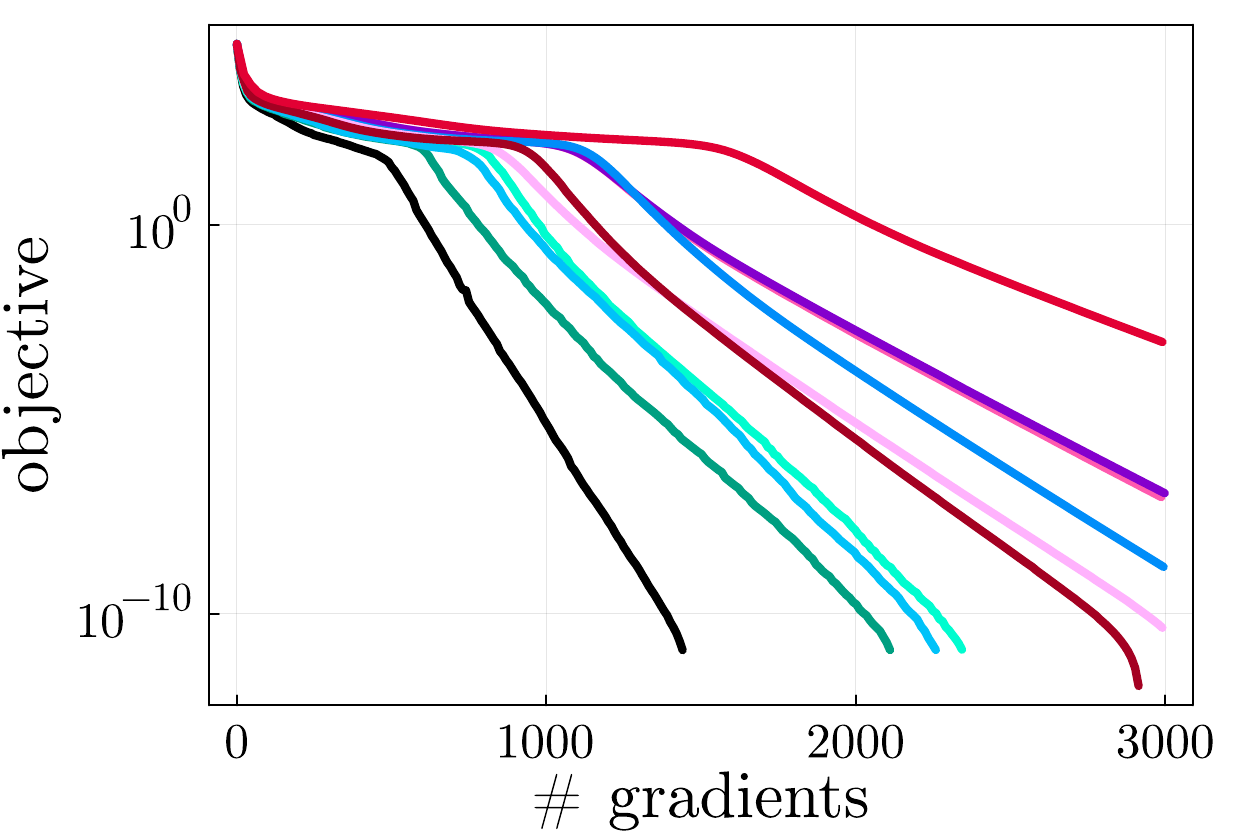}
    \caption{$n=100$, $r=20$}\label{fig:nmf-1}
  \end{subfigure}
  \begin{subfigure}[b]{0.45\linewidth}
    \centering
    \includegraphics[width=\linewidth]{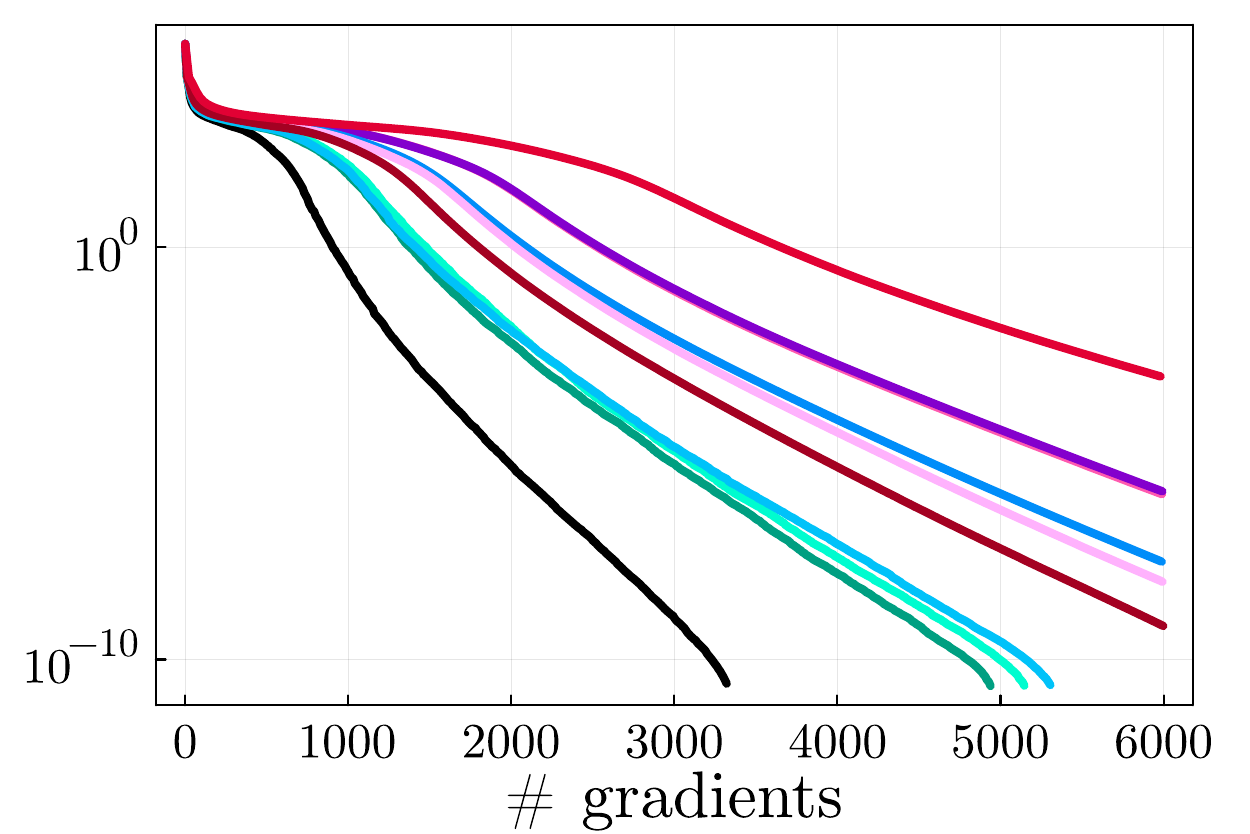}
    \caption{$n=100$, $r=30$}\label{fig:nmf-2}
  \end{subfigure}
   
  \caption{{Nonnegative matrix factorization, problem~\eqref{eq:nmf}}}
  \label{fig:nmf}
\end{figure}

\paragraph{Dual of the entropy maximization problem.} Consider the entropy
maximization problem subject to linear constraints
\begin{equation}
  \label{eq:maxentropy}
    \min \sum_{i=1}^nx_i \log x_i \quad \text{subject to $Ax \leq b$,\quad
     $\sum_{i=1}^n x_i=1$, and $x_i>0$, }
 \end{equation}
 where $A\in \R^{m\times n}$.
Its dual problem is given by
\begin{equation}
  \label{eq:dual_maxent}
  \min_{\la\in \R^m_{+}, \mu\in \R}   e^{-\mu-1}\sum_{i=1}^ne^{-a_i^\top \la}+\lr{b, \la} + \mu,
\end{equation}
where $a_i\in \R^m$ is the $i$-th column of $A$ (the derivation is provided in
\cite[Chapter 5.1.6]{boyd2004convex}).
\begin{figure}[ht]
  \mylegend
  \centering
  \begin{subfigure}[b]{0.45\linewidth}
    \centering
    \includegraphics[width=\linewidth]{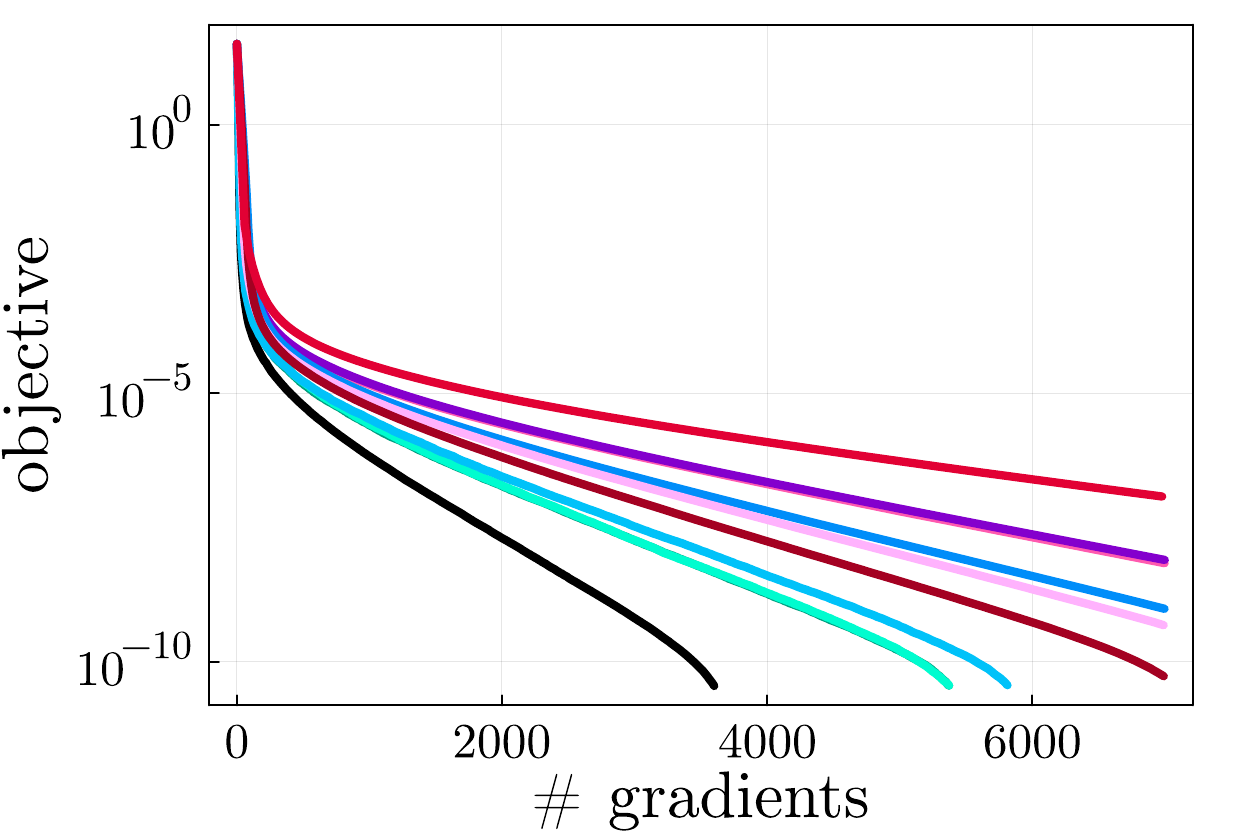}
    \caption{$m=500$, $n=100$}\label{fig:dual_maxent-1}
  \end{subfigure}
  \begin{subfigure}[b]{0.45\linewidth}
    \centering
    \includegraphics[width=\linewidth]{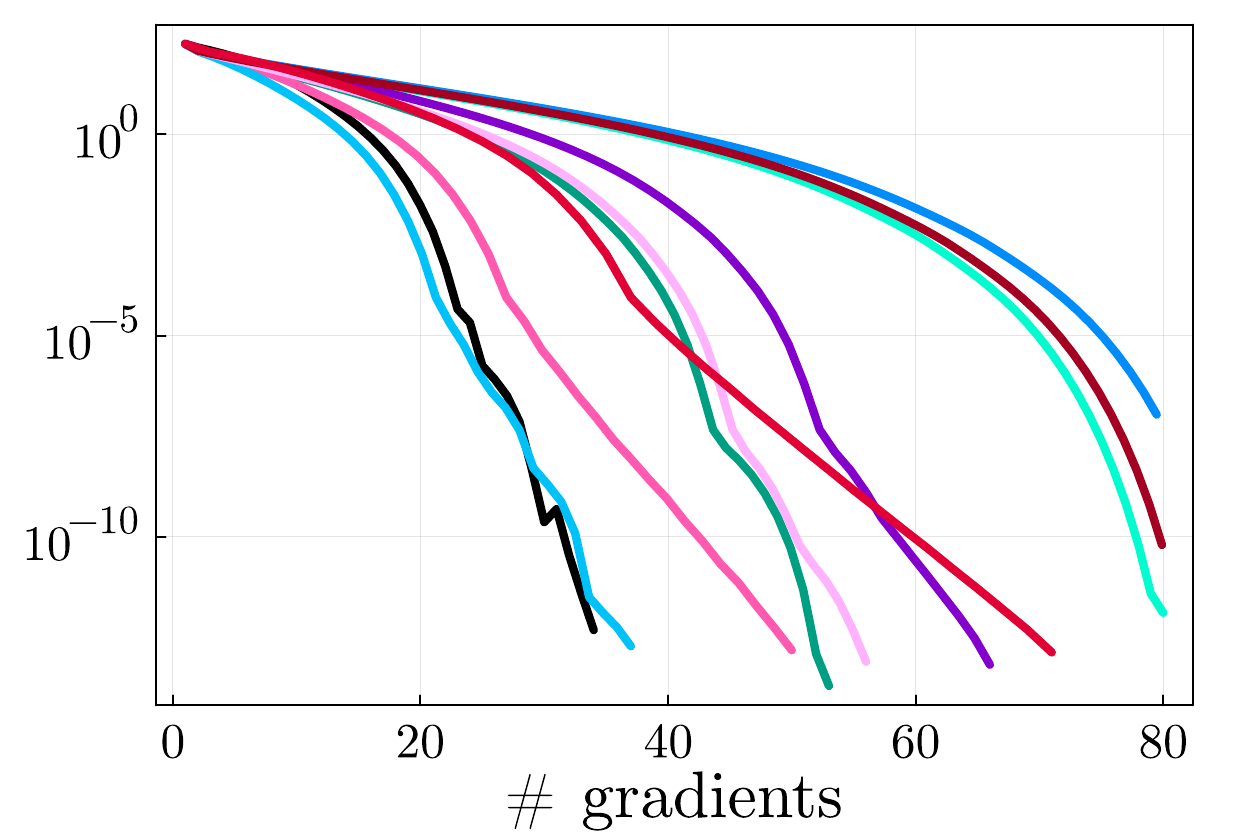}
    \caption{$m=100$, $n=500$}\label{fig:dual_maxent-2}
  \end{subfigure}
   
  \caption{{Dual of the entropy maximization, problem~\eqref{eq:dual_maxent}}}
  \label{fig:dual_maxent}
\end{figure}
 
It is the latter problem~\eqref{eq:dual_maxent} that we solved. We generated
$m$-by-$n$ matrix $A$ with entries sampled from a normal distribution. Then we
generated a random $w\in \R^n$ from the unit simplex and set $b = Aw$. Each
gradient requires two matrix-vector multiplication, while the objective only one
(and, as before, the last one can be reused for the next gradient). The results
are presented in~\Cref{fig:dual_maxent}, where the number of gradients roughly
means the number of 2 matrix-vector multiplications.

\paragraph{Conclusion.}
Based on our preliminary experiments, it is evident that AdProxGD indeed performs better. 
To our surprise, a few specific pairs $(r,s)$ consistently outperform the rest among
ProxGD with linesearch. We are not aware of any theoretical finding that would
confirm this evidence.

\section*{\hypertarget{appendix}{Appendix}}

\begin{lemma}\label{prop:bad_f}
The function $f$ defined in~\eqref{eq:counter} satisfies the following properties:
        \begin{enumerate}
                \item $f$ is convex.
                \item $f'$ is $L$-Lipschitz with $L=1$.
                \item $f$ is locally strongly convex, i.e., for any bounded set $\mathcal{X}$ there exists a constant $\mu_{\mathcal{X}}>0$ such that $|f'(x)-f'(y)|\geq \mu_{\mathcal{X}}|x-y|$ for any $x,y\in \mathcal{X}$.
                \item $|f'(x)|\leq G$ with $G=2$.
                \item $f$ is $2$-Lipschitz.
        \end{enumerate}
      \end{lemma}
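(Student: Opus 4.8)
The plan is to exploit the fact that $f$ is even and reduce every claim to the half-line $x\ge 0$, where $f$ is built from two explicit smooth pieces glued at $x=1$. First I would record the derivatives on each piece: on $[0,1]$ one has $f'(x)=x$ and $f''(x)=1$, while on $(1,+\infty)$, writing $f(x)=a(x-\log(1+x))+b$, one gets $f'(x)=\frac{ax}{1+x}$ and $f''(x)=\frac{a}{(1+x)^2}$. The only genuinely numerical step is to check the $C^1$ gluing at $x=1$: matching $f'$ there forces $\frac{a}{2}=1$, i.e.\ $a=2$, and then matching $f$ forces $b=\frac{1}{2}-a+a\log 2=2\log 2-\frac{3}{2}$, which is exactly the stated choice of constants. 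Since $f$ is even, $f'$ is odd and $f'(0)=0$; together with continuity on $[0,+\infty)$ this makes $f'$ continuous on all of $\R$ and piecewise-$C^1$, with $f''$ having only a bounded jump at $x=\pm1$.

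With these formulas in hand, each of the five items becomes a short verification. For convexity I would check that $f'$ is nondecreasing: it is increasing on $[0,1]$, increasing on $(1,\infty)$ since $f''>0$ there, and odd, hence nondecreasing on all of $\R$. For the $1$-Lipschitzness of $f'$ I would note $f''=1$ on $[0,1]$ and $0<f''(x)=\frac{2}{(1+x)^2}\le\frac{1}{2}$ on $(1,\infty)$, so $|f''|\le 1$ wherever defined; since $f'$ is continuous and piecewise-$C^1$, the mean value theorem on each piece upgrades this to a global Lipschitz bound. Local strong convexity follows by observing that on any bounded $\cX\subseteq[-M,M]$ with $M\ge 1$ one has $f''\ge\frac{2}{(1+M)^2}=:\mu_{\cX}>0$, and integrating $f'(x)-f'(y)=\int_y^x f''(t)\,dt$. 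The bound $|f'|\le 2$ is immediate: $f'(x)=x\le 1$ on $[0,1]$, $f'(x)=2-\frac{2}{1+x}<2$ on $(1,\infty)$, and $f'$ is odd. Finally, $2$-Lipschitzness of $f$ follows from $\sup_x|f'(x)|\le 2$ via the mean value theorem.

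I do not expect a real obstacle here; the proof is essentially bookkeeping. The only points needing slight care are the nonsmooth abscissae $x=\pm1$: one must observe that $f'$ stays continuous there, which is precisely what lets "piecewise-$C^1$ with a bounded second derivative" imply "globally Lipschitz gradient", and one must use the infimum of $f''$ over the bounded set (not a pointwise value) to extract the strong-convexity modulus $\mu_{\cX}$ in item (3).
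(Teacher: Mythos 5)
Your proposal is correct and follows essentially the same route as the paper's proof: compute $f'$ and $f''$ on each piece, fix $a=2$ and $b=2\log 2-\tfrac{3}{2}$ by the $C^1$ gluing at $\pm 1$, and then read off convexity, the Lipschitz constant of $f'$, and local strong convexity from the bounds $0<f''\le 1$ and $f''\ge \mu_{\mathcal X}$ on bounded sets, with items (4)–(5) following directly from the formula for $f'$. Your extra care at the gluing points and the explicit derivation of $a,b$ only make explicit what the paper states in passing.
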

      \begin{proof} 
First, let us find $f'$ and $f''$:
\begin{align*}
        &f'(x)
        = \begin{cases}
                x,& x\in [-1, 1] \\
                \frac{ax}{1+|x|}, & x\not\in [-1, 1]
        \end{cases},\qquad
        &f''(x)
        = \begin{cases}
                1,& x\in (-1, 1) \\
                \frac{a}{(1+|x|)^2}, & x\not\in [-1, 1]
        \end{cases},
\end{align*}
so indeed  $a=2$ and $b=2\log 2-\frac{3}{2}$. Convexity of $f$ follows from the fact that $f''(x)>0$ for any
  $x$. Lipschitzness of $f'$ follows directly from the bound $f''(x)\leq 1$ for
  all $x$. Similarly, local strong convexity follows from the bound $f''(x)\ge
  \frac{1}{\max_{z\in \mathcal{X}}(1+|z|)^2}\eqqcolon\mu_\cX$ for any $x\in
  \cX$. Finally, the last two properties trivially follow from the expression for $f'(x)$.
\end{proof}
\begin{proof}[Proof of \Cref{prop:counter}]\label{prooflemma1}
 Let us choose $x^0=r + 2 $  with a sufficiently large $r > 6c$. 
  This readily implies that
\[x^1 = x^0 - \frac{2x^0}{1+x^0} = \frac{x^0(x^0-1)}{x^0+1}>x^0-2 = r.\]
  Our goal is to show that the iterates follow a very specific pattern. Namely, we prove that for all $k\geq 0$,
\[\sign(x^{2k}) = \sign(x^{2k+1}), \quad \sign(x^{2k+2})\neq \sign(x^{2k}), \quad |x^{2k+2}|>2|x^{2k+1}| > |x^{2k}|.\]
If this condition holds true, then the sequence $(x^k)$ must be divergent.

  First, observe that if $|x^k|, |x^{k-1}|\geq r$ and $\sign(x^k)=\sign(x^{k-1})$, then the smoothness estimate admits a simple expression:
        \begin{align*}
                L_k
                &= \frac{|f'(x^k) - f'(x^{k-1})|}{|x^k - x^{k-1}|}
                = \frac{2\left|\frac{x^k}{1+|x^k|}-\frac{x^{k-1}}{1+|x^{k-1}|} \right|}{|x^k - x^{k-1}|}
                = \frac{2\left|\frac{|x^k|}{1+|x^k|}-\frac{|x^{k-1}|}{1+|x^{k-1}|} \right|}{||x^k| - |x^{k-1}||}\\
                &= \frac{2}{(1+|x^{k}|)(1+|x^{k-1}|)}< \frac{2}{r(1+|x^{k}|)}.
        \end{align*}
Therefore, in that case $\ss_k|f'(x^k)|>\frac{r(1+|x^{k}|)}{2c}|f'(x^k)|=\frac{r|x^k|}{c}>3|x^k|$. Since $\sign(f'(x^k))=\sign(x^k)$, it implies that $|x^{k+1}|>2|x^k|$ and $\sign(x^{k+1})\neq \sign(x^k)$.

        Next, if $|x^k|, |x^{k-1}|\geq r > 3$ with $|x^k|\geq 2|x^{k-1}|$ and $\sign(x^k)\neq \sign(x^{k-1})$, then we have $\frac{2|x^k|}{1+|x^k|}>\frac 3 2$ and
        \begin{align*}
                L_k
                &= \frac{|f'(x^k) - f'(x^{k-1})|}{|x^k - x^{k-1}|}
                = \frac{2\left|\frac{x^k}{1+|x^k|}-\frac{x^{k-1}}{1+|x^{k-1}|} \right|}{|x^k - x^{k-1}|}
                = \frac{2\left(\frac{|x^k|}{1+|x^k|}+\frac{|x^{k-1}|}{1+|x^{k-1}|} \right)}{|x^k| + |x^{k-1}|} \\
                &> \frac{3}{|x^k| + |x^{k-1}|}
                > \frac{3}{|x^k| + \frac{1}{2}|x^{k}|}
                \geq  \frac{2}{|x^k|}.
        \end{align*}
        This implies $\ss_k < \frac{|x^k|}{2c}\leq \frac{|x^k|}{2}$. Since
        $\sign(f'(x^k)) = \sign(x^k)$ and $\frac{\ss_k}{1+|x^k|}\leq \frac{|x^k|}{2(1+|x^k|)}<\frac 1 2 $, we conclude that $\sign(x^{k+1}) = \sign(x^k)$ and
        \[|x^{k+1}|=\left|x^{k}-\ss_k\frac{x^{k}}{1+|x^k|}\right| = |x^k| \left(1 - \frac{\ss_k}{1+|x^k|}\right) >  \frac{1}{2}|x^k|.\]
As $x^0$ and $x^1$ satisfy the first case, by induction we deduce that all iterates $(x^k)$ follow the described pattern.
\end{proof}

\paragraph{Acknowledgments.} The authors would like to thank Puya Latafat, who found a subtle error in the convergence proof of $(x^k)$ in~\Cref{th:main-2} in the first version of this manuscript.

\printbibliography

\end{document}